\numberwithin{equation}{section}
\definecolor{ao}{rgb}{0.0, 0.5, 0.0}
\newtheorem{theorem}{Theorem}[section]
\newtheorem{lemma}[theorem]{Lemma}
\newtheorem{conjecture}{Conjecture}
\newtheorem{proposition}[theorem]{Proposition}
\newtheorem{remark}[theorem]{Remark}
\newtheorem{definition}{Definition}
\newtheorem*{theosn}{Theorem}
\newtheorem*{propsn}{Proposition}
\newcommand{\mc}[1]{{\mathcal #1}}
\newcommand{\bb}[1]{{\mathbb #1}}
\newcommand{\<}{\langle}
\renewcommand{\>}{\rangle}
\newcommand{\p}{\partial}
\newcommand{\pfrac}[2]{\genfrac{}{}{}{1}{#1}{#2}}
\newcommand{\at}[2]{\genfrac{}{}{0pt}{}{#1}{#2}}
\newcommand{\dl}{\<\!\<}
\newcommand{\dr}{\>\!\>}
\let\oldtocsection=\tocsection
\let\oldtocsubsection=\tocsubsection
\let\oldtocsubsubsection=\tocsubsubsection
\renewcommand{\tocsection}[2]{\hspace{0em}\oldtocsection{#1}{#2}}
\renewcommand{\tocsubsection}[2]{\hspace{1em}\oldtocsubsection{#1}{#2}}
\renewcommand{\tocsubsubsection}[2]{\hspace{2em}\oldtocsubsubsection{#1}{#2}}
\DeclareRobustCommand{\SkipTocEntry}[5]{}
\newcommand{\tclock}[5]{
	\begin{pgflowlevelscope}{\pgftransformscale{#4}}
		\begin{scope}[shift={(#1,#2)}]
			\shadedraw [inner color=#3!7!white, outer color=#3!90!black, 
			line width=0.2pt] (0,0) circle (0.5cm);
			\foreach \x in {6,12,...,360} {\draw[line width=0.2pt] (\x:0.40cm) -- (\x:0.45cm);}
			\foreach \y in {30,60,...,360} {\draw[line width=0.2pt] (\y:0.35cm) -- (\y:0.45cm);}
			{\pgfsetarrowsstart{to}
				\draw[line width=0.4pt] (0:0.29cm) -- (0.02,0);
				\draw[line width=0.4pt]  (90:0.32cm)--(0,0.02);}
			\filldraw[fill=black] (-0.055,0.55) rectangle (0.055,0.6);
			\filldraw[fill=black] (-0.015,0.51) rectangle (0.015,0.55);
			\draw [line width=0.2pt](0,0.61) circle (0.11cm);
			\draw [line width=0.2pt](0,0) circle (0.5cm);
			\draw [line width=0.2pt](0,0) circle (0.02cm);
			\draw [red,thick,domain=30:45] plot ({#5*0.6*cos(\x)}, {#5*0.6*sin(\x)});
			\draw [red,thick,domain=20:55] plot ({#5*0.65*cos(\x)}, {#5*0.65*sin(\x)});
			\draw [red,thick,domain=10:65] plot ({#5*0.7*cos(\x)}, {#5*0.7*sin(\x)});
			\draw [red,thick,domain=135:150] plot ({#5*0.6*cos(\x)}, {#5*0.6*sin(\x)});
			\draw [red,thick,domain=125:160] plot ({#5*0.65*cos(\x)}, {#5*0.65*sin(\x)});
			\draw [red,thick,domain=170:115] plot ({#5*0.7*cos(\x)}, {#5*0.7*sin(\x)});
		\end{scope}
	\end{pgflowlevelscope}
}
\newcommand\ve{\varepsilon}
\keywords{Porous medium model,  Porous medium equation, Robin boundary conditions.}
 \date{}
\begin{document}
	
	\title[Energy estimates and convergence of weak solutions of the  porous medium equation]{Energy estimates and convergence  of \\weak solutions of the porous medium equation}

	\author{R. de Paula}
	\address{Center for Mathematical Analysis,  Geometry and Dynamical Systems, Instituto Superior T\'ecnico, Universidade de Lisboa, Av. Rovisco Pais, 1049-001 Lisboa, Portugal.}
	\curraddr{}
	\email{renato.paula@tecnico.ulisboa.pt}
	\thanks{}
	
	\author{P. Gon\c calves}
	\address{Center for Mathematical Analysis,  Geometry and Dynamical Systems, Instituto Superior T\'ecnico, Universidade de Lisboa, Av. Rovisco Pais, 1049-001 Lisboa, Portugal.}
	\curraddr{}
	\email{pgoncalves@tecnico.ulisboa.pt}
	\thanks{}
	
	\author{A. Neumann}
	\address{UFRGS, Instituto de Matem\'atica e Estat\'istica, Campus do Vale, Av. Bento Gon\c calves, 9500. CEP 91509-900, Porto Alegre, Brasil}
	\curraddr{}
	\email{aneumann@mat.ufrgs.br}
	\thanks{}

	\subjclass[2010]{60K35, 26A24, 35K55}

	\maketitle

	\begin{abstract}
We study the convergence of the weak solution of the porous medium equation with a type of Robin boundary conditions, by tuning a parameter either to zero or to infinity. The convergence is in the strong sense, with respect to the $L^2$-norm, and the limiting function solves the same equation with Neumann (resp. Dirichlet) boundary conditions when the parameter is taken to zero  (resp. infinity). Our approach is to consider an underlying microscopic dynamics whose space-time evolution of the density is ruled by the solution of those equations and from this, we derive sufficiently strong energy estimates which are the keystone to the proof of our convergence result. 
	\end{abstract}
	
	\section{Introduction}

	In this article we are focused on the porous medium equation (PME) with  a type of Robin boundary conditions given by 
	\begin{equation}\label{eq:Robin}
\begin{cases}
&\partial_{t}\rho_{t}^\kappa(u)= \Delta\, ({\rho_t}^\kappa)^m(u), \quad (t,u) \in (0,T]\times(0,1),\\
&\partial_{u} ({\rho_t}^\kappa)^m(0)=\kappa(\rho_{t}^\kappa(0) -\alpha),\quad t \in (0,T], \\
&\partial_{u} ({\rho_t}^\kappa)^m(1)=\kappa(\beta -\rho_{t}^\kappa(1)),\quad t \in (0,T], \\
&{ \rho}_{0}^\kappa(u)= g(u), \quad u\in[0,1], 
\end{cases}
\end{equation}
	where $m\in \mathbb N$, $\kappa> 0$, $\alpha,\beta\in (0,1)$ and $T>0$. When $m=1$ we recover the heat equation which is a linear parabolic equation. When $m\neq 1$ the equation becomes non-linear and its study is more  intricate with respect to the case $m=1$.  Observe that the solution of the equation above depends on $\alpha,\beta, m$ and $\kappa$, but we only index the solutions on $\kappa$ because $\alpha,\beta$, and $m$  are fixed parameters along the article. Our goal is to analyze the limit, with respect to the norm of $L^2([0,T]\times [0,1])$, of the weak solution $\rho^\kappa$ of  \eqref{eq:Robin} as we tune $\kappa$ to $0$ or to $\infty$. Our notion of weak solution of \eqref{eq:Robin} can be seen in Definition \ref{Def. Robin}.  We observe that this weak solution is unique and this was proved in Section 7.2 of \cite{BPGN} for the case $m=2$, but in the general case, the proof is analogous. In fact, one can just repeat the proof of  \cite{BPGN} and  consider  a function $u^m$ instead of a function $\beta^{-1}(u)$, according to the notation of \cite{filo}, and, as in \cite{BPGN}, use the boundedness of the solutions to 
overcome the fact that 	$u^m$ is not Lipschitz, which is an important hypothesis for the proof given in \cite{filo}.  In this article, we will prove that the weak solution  of \eqref{eq:Robin}, $\rho^\kappa$, converges in $L^2$, when we take $\kappa\to 0$,  to the unique weak solution of the porous medium equation with Neumann boundary conditions given by
		\begin{equation}\label{eq:Neumann}
\begin{cases}
&\partial_{t}\rho_{t}(u)= \Delta\,  (\rho_{t})^m (u), \quad (t,u) \in (0,T]\times(0,1),\\
&\partial_{u} (\rho_{t})^m (0)=\partial_{u} (\rho_{t})^m (1)=0,\quad t \in (0,T], \\
&{ \rho}_{0}(u)= g(u), \quad u\in[0,1], 
\end{cases}
\end{equation}
or, when $\kappa\to \infty$, to the unique weak solution of the porous medium equation with Dirichlet boundary conditions given by
		\begin{equation}\label{eq:Dirichlet}
\begin{cases}
&\partial_{t}\rho_{t}(u)=\Delta\,  ({\rho_t})^m(u), \quad (t,u) \in (0,T]\times(0,1),\\
&{ \rho} _{t}(0)=\alpha, \quad { \rho}_{t}(1)=\beta,\quad t \in (0,T], \\
&{ \rho}_{0}(u)= g(u), \quad u\in[0,1],
\end{cases}
\end{equation}
The notions of weak solutions of the two equations above are given in Definitions  
 \ref{Def.Neumann} and \ref{Def. Dirichlet}, respectively.  The choice of this nomenclature for all the boundary conditions above is based on \cite{vazquez_book}, although we know that according to \cite{GA} it would be more appropriate to use free (resp. fixed) boundary conditions for the  Neumann  (resp. Dirichlet) boundary conditions. For the named Robin boundary conditions, one could think of an elastic or partially absorbed boundary and for that reason, one should refer to it as an elastic boundary condition. 

We observe that our result is very intuitive as one can see, naively, by looking at the expressions $\partial_{u} ({\rho_t}^\kappa)^m(0)=\kappa(\rho^\kappa_{t}(0) -\alpha)$ and $\partial_{u} ({\rho_t}^\kappa)^m(1)=\kappa(\beta -\rho^\kappa_{t}(1))$, when taking $\kappa\to0$ one can guess that Neumann boundary conditions are recovered, but when $\kappa\to \infty$ the only possibility for the identity to make sense is to require the right-hand side to be zero, i.e., $\rho^\kappa_{t}(0)=\alpha$ and $\rho^\kappa_t(1)=\beta$, for which one should get Dirichlet boundary conditions. 
  We highlight that our result handles bounded weak solutions,  therefore, we cannot use classical arguments as, for example, the ones used to control the $L^2$-norm of $\rho_t^\kappa$. Since our solutions are given in the weak sense, there is an integral formulation of the equation obtained by means of  integrating with respect to a test function.  Therefore, employing  any argument that uses
 as test function the solution itself, that is $\rho^\kappa$, will not work, since $\rho^\kappa$ is not time differentiable as required for the input test functions. In our case,  the best time regularity we get  is that for each test function $H $ the map $t\mapsto\<\rho^\kappa_t, H_t\>:=\int_0^1\rho^\kappa_t(u)H_t(u)\,du$ is $\sfrac{1}{2}-$H\"older continuous, uniformly on $\kappa$, see Proposition \ref{prop3} below.

Our strategy of proof of the convergence described above is similar to one of \cite{phase}, where it was considered the heat equation evolving on the torus and with another type of Robin boundary condition,
 { 
 		\begin{equation*}
 		\begin{cases} 
 		&\partial_{t}\rho_{t}(u)=\Delta\, {\rho} _{t}(u), \quad (t,u) \in (0,T]\times(0,1), \\
 		&\partial_u{ \rho} _{t}(0)=\partial_u{ \rho}_{t}(1)=\kappa(\rho_t(0)-\rho_t(1)), \quad t\in(0,T],\\
 		&{ \rho}_{0}(u)= g(u), \quad u\in\bb T\,.
 		\end{cases} 
 		\end{equation*}
}
We highlight that even in our linear case, that is, for $m=1$,  there are several  differences between  our case  and the one of \cite{phase}, because the equations not only live in different spaces but also the nature of the boundary conditions is different. Note that  the particle system studied in \cite{phase} models the flux through a barrier  and our system models the flux between two reservoirs. In the present work we handle both with  the linear  ($m=1$) and the non-linear case, and in \cite{phase} only the linear case was observed. If we take our system evolving on the torus and we add a slow bond as done in  \cite{phase} for the linear case, we would obtain the  PME with Robin boundary conditions similar to those of \cite{phase}. The corresponding equation in that case, for $\theta=1$, should be:
\begin{equation}\label{eq:Robin2}
\begin{cases}
&\partial_{t}\rho_{t}^\kappa(u)= \Delta\, ({\rho_t}^\kappa)^m(u), \quad (t,u) \in (0,T]\times\bb T\backslash\{0\} ,\\
&\partial_{u} ({\rho_t}^\kappa)^m(0)=\partial_{u} ({\rho_t}^\kappa)^m(1)=\kappa((\rho_{t}^\kappa(0))^m -(\rho_{t}^\kappa(1))^m ),\quad t \in (0,T], \\
&{ \rho}_{0}^\kappa(u)= g(u), \quad u\in\bb T\,.
\end{cases}
\end{equation} We did not pursue this approach here but we strongly  believe that the techniques employed in \cite{BPGN, phase}  together with those developed in this work, would allow us to analyse it. Nevertheless, we give some details on this approach  in Section  \ref{slow bond}.

  In order to handle our type of weak solutions, we need to obtain a  detailed information about their boundary behavior and this is done based on a non-trivial energy estimate, which is one of the main contributions of the present work. We highlight that we employ the energy estimate that we derive  in order to obtain convergence results for the weak solution of our equations, but we believe that it should have many other application when looking at other observables of the system, we leave this investigation to a future work.  
More precisely, the argument  we employ can be described as follows.  First, we consider a proper $L^2$ space that gives adequate weights at the boundary points $0$ and $1$, which  are explicitly  given by \eqref{p}. From this space, we define an energy functional which we prove to be  finite  on functions $\xi$ such that their  $m$-th power,  namely $\xi^m$,  belongs to  $L^2(0,T;\mathcal H^1)$, where $\mathcal H^1$ is the usual Sobolev space, and their  weak derivative exists and lives on the introduced  $L^2$ space, and they also satisfy the Robin boundary conditions, as given in \eqref{eq:Robin}, almost everywhere  in time. Given this result, we then prove that the solution $\rho^\kappa$ of \eqref{eq:Robin} has finite energy with respect to  that  energy functional, so that all the aforementioned results come for free for $\rho^\kappa$. This is the content of Theorem \ref{energy estimate consequence}. 
From this result we are able to obtain information about the limit points of $\{\rho^\kappa\,:\,\kappa>0\}$ which allows  showing that any limit point $\rho^\star$ is a weak solution of \eqref{eq:Neumann} (resp. \eqref{eq:Dirichlet}) when  $\kappa\to 0$ (resp. $\kappa\to \infty$).  Observe that according to our definition of weak solutions, see Definitions \ref{Def.Neumann} and \ref{Def. Dirichlet}, we have to prove for both situations that $(\rho^\star)^m$ is in the usual Sobolev space and satisfies the corresponding integral equation, this is done through  Propositions \ref{prop2}, \ref{prop5}, \ref{prop3}, and \ref{prop4}.  Nevertheless, in Definition \ref{Def. Dirichlet} there is an extra point  with respect to Definition \ref{Def.Neumann}, which is to show that the solution satisfies, for almost every $t\in(0,T]$, the Dirichlet boundary conditions and this is proved in  Proposition \ref{prop:3.5}.

 Now we explain how we prove that the solution of \eqref{eq:Robin} has finite energy. The idea is to consider an underlying Markovian dynamics, that we denote by $\{\eta_t;\,t\in [0,T]\}$ and  which has a hydrodynamic limit given  by the unique weak solution of \eqref{eq:Robin}. The dynamics of our Markov process $\{\eta_t;\,t\in [0,T]\}$  is composed of combining the dynamics of the porous medium model (PMM) with the dynamics of the symmetric simple exclusion process (SSEP), and a Glauber dynamics. The SSEP dynamics is chosen in such a way that it does not destroy the nature of the hydrodynamic equation \eqref{eq:Robin}, but it is fundamental in order to allow a  microscopic transport of mass. On the other hand, the Glauber dynamics adds boundary conditions as given in \eqref{eq:Robin}, \eqref{eq:Neumann}, and \eqref{eq:Dirichlet}. Now we explain each one of these dynamics as follow. First, we discretize the space where the weak solutions live, that is, the macroscopic space $[0,1]$, by a scaling factor $n$, in such a way that we associate to each interval of the form $[\frac{x}{n},\frac{x+1}{n})$ the microscopic point  $x\in\{1,\ldots, n-1\}$. Now, in the microscopic space $\{1,\ldots, n-1\}$, we allow at most one particle per site and jumps between the sites $x$ and $x+1$  with $x\in\{1,\ldots, n-2\}$, occur by looking at the realization of Poisson processes of rate $r_x(\eta)+n^{a-2}$, with $1<a<2$ and $r_x(\eta)$ is a rate which depends on the occupation sites close to $x$. The form of $r_x$ will be explained below, but it might happen for certain configurations that $r_x(\eta)=0$. In those cases, without the presence of the SSEP dynamics the particle would be blocked and the role of perturbing the system in this way is exaclty to avoid this situation. At the boundary bonds, namely $(0,1)$ and $(n-1,n)$, particles can be injected to  (resp. removed from) the site $1$  or $n-1$, respectively, by  looking at the realization of Poisson processes with rate $\frac{\kappa\alpha}{n^\theta}$ or $\frac{\kappa\beta}{n^\theta}$  (resp. $\frac{\kappa(1-\alpha)}{n^\theta}$ or $\frac{\kappa(1-\beta)}{n^\theta}$). All the Poisson processes are independent. We observe that the role of the parameter $\theta$ is to regulate the intensity of the boundary dynamics, and depending on its range, we will obtain, at the  macroscopic level, different  boundary conditions. The PMM dynamics  is simple to describe: a particle at site $x$ jumps to $x+1$, as long as, there are at least $m-1$ particles in one of the sets of points \begin{equation}\label{eq:sets}
	\{x-(m-1),\ldots, x-1\},\{x-(m-2),\ldots, x-1,x+2\},\ldots, \{x+2,x+3,\ldots, x+m\},\end{equation} 	see the figure below.

\begin{figure*}[h!]
	\begin{center}
		\begin{tikzpicture}[thick, scale=0.75]
			\draw [line width=0.8] (-9,10.5) -- (9,10.5) ; 
			\foreach \x in  {-8,-7,-6,-5,-4,-3,-2,-1,0,1,2,3,4,5,6,7,8} 
			\draw[shift={(\x,10.5)},color=black, opacity=1] (0pt,0pt) -- (0pt,-4pt) node[below] {};
			\draw[] (-2.8,10.5) node[] {};
			
			\draw[] (-6,10.4) node[below] {\tiny{$x-(m-1)$}};
			\draw[] (-1,10.4) node[below] {\tiny{$x-1$}};
			\draw[] (0,10.34) node[below] {\tiny{$x$}};
			\draw[] (1,10.4) node[below] {\tiny{$x+1$}};
			
			
		\shade[shading=ball, ball color=black!50!] (-6,10.76) circle (.245);
		\shade[shading=ball, ball color=black!50!] (-5,10.76) circle (.245);
		\shade[shading=ball, ball color=black!50!] (-4,10.76) circle (.245);
		\shade[shading=ball, ball color=black!50!] (-3,10.76) circle (.245);
		\shade[shading=ball, ball color=black!50!] (-2,10.76) circle (.245);
		\shade[shading=ball, ball color=black!50!] (-1,10.76) circle (.245);
		\shade[shading=ball, ball color=black!50!] (0,10.76) circle (.245);
			
		\end{tikzpicture} 
	\end{center}
\end{figure*}
\vspace{-0.8cm}
\begin{figure*}[h!]
	\begin{center}
		\begin{tikzpicture}[thick, scale=0.75]
			\draw [line width=0.8] (-9,10.5) -- (9,10.5) ; 
			\foreach \x in  {-8,-7,-6,-5,-4,-3,-2,-1,0,1,2,3,4,5,6,7,8} 
			\draw[shift={(\x,10.5)},color=black, opacity=1] (0pt,0pt) -- (0pt,-4pt) node[below] {};
			\draw[] (-2.8,10.5) node[] {};
			
			\draw[] (-5,10.4) node[below] {\tiny{$x-(m-2)$}};
			\draw[] (0,10.34) node[below] {\tiny{$x$}};
			\draw[] (1,10.4) node[below] {\tiny{$x+1$}};
			\draw[] (2,10.4) node[below] {\tiny{$x+2$}};
			
			
		\shade[shading=ball, ball color=black!50!] (-5,10.76) circle (.245);
		\shade[shading=ball, ball color=black!50!] (-4,10.76) circle (.245);
		\shade[shading=ball, ball color=black!50!] (-3,10.76) circle (.245);
		\shade[shading=ball, ball color=black!50!] (-2,10.76) circle (.245);
		\shade[shading=ball, ball color=black!50!] (-1,10.76) circle (.245);
		\shade[shading=ball, ball color=black!50!] (0,10.76) circle (.245);
		\shade[shading=ball, ball color=black!50!] (2,10.76) circle (.245);
		\end{tikzpicture} 
	\end{center}
\end{figure*}
\vspace*{-1.2cm}
\begin{figure*}[h!]
	\begin{center}
		\begin{tikzpicture}[thick, scale=0.75]
			\draw[] (0,10.34) node[below] {$\vdots$};
		\end{tikzpicture} 
	\end{center}
\end{figure*}
\vspace{-0.8cm}
\begin{figure*}[h!]
	\begin{center}
		\begin{tikzpicture}[thick, scale=0.75]
			\draw [line width=0.8] (-9,10.5) -- (9,10.5) ; 
			\foreach \x in  {-8,-7,-6,-5,-4,-3,-2,-1,0,1,2,3,4,5,6,7,8} 
			\draw[shift={(\x,10.5)},color=black, opacity=1] (0pt,0pt) -- (0pt,-4pt) node[below] {};
			\draw[] (-2.8,10.5) node[] {};
			
			\draw[] (-1,10.4) node[below] {\tiny{$x-1$}};
			\draw[] (0,10.34) node[below] {\tiny{$x$}};
			\draw[] (1,10.4) node[below] {\tiny{$x+1$}};
			\draw[] (6,10.4) node[below] {\tiny{$x+m-1$}};
			
			
		\shade[shading=ball, ball color=black!50!] (-1,10.76) circle (.245);
		\shade[shading=ball, ball color=black!50!] (0,10.76) circle (.245);
		\shade[shading=ball, ball color=black!50!] (2,10.76) circle (.245);
		\shade[shading=ball, ball color=black!50!] (3,10.76) circle (.245);
		\shade[shading=ball, ball color=black!50!] (4,10.76) circle (.245);
		\shade[shading=ball, ball color=black!50!] (5,10.76) circle (.245);
		\shade[shading=ball, ball color=black!50!] (6,10.76) circle (.245);
		\end{tikzpicture} 
	\end{center}
\end{figure*}
\vspace{-0.8cm}
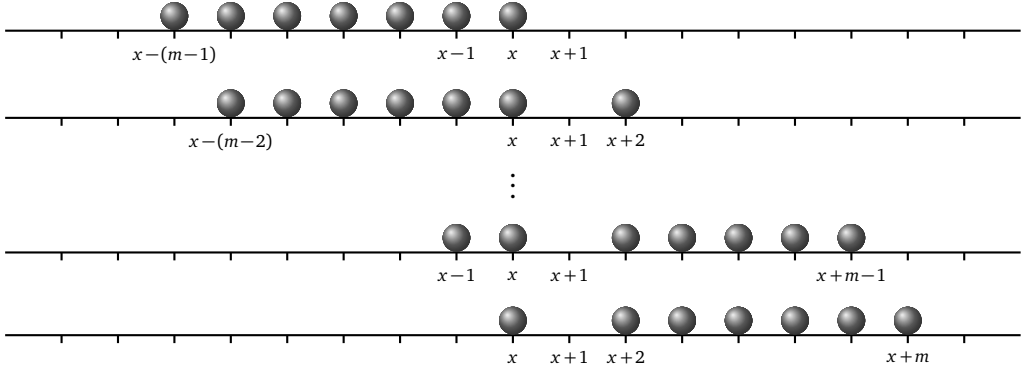
\begin{figure}[h!]
	\begin{center}
		\begin{tikzpicture}[thick, scale=0.75]
			\draw [line width=0.8] (-9,10.5) -- (9,10.5) ; 
			\foreach \x in  {-8,-7,-6,-5,-4,-3,-2,-1,0,1,2,3,4,5,6,7,8} 
			\draw[shift={(\x,10.5)},color=black, opacity=1] (0pt,0pt) -- (0pt,-4pt) node[below] {};
			\draw[] (-2.8,10.5) node[] {};
			
			\draw[] (0,10.34) node[below] {\tiny{$x$}};
			\draw[] (1,10.4) node[below] {\tiny{$x+1$}};
			\draw[] (2,10.4) node[below] {\tiny{$x+2$}};
			\draw[] (7,10.4) node[below] {\tiny{$x+m$}};
			
			
			\shade[shading=ball, ball color=black!50!] (0,10.76) circle (.245);
			\shade[shading=ball, ball color=black!50!] (2,10.76) circle (.245);
			\shade[shading=ball, ball color=black!50!] (3,10.76) circle (.245);
			\shade[shading=ball, ball color=black!50!] (4,10.76) circle (.245);
			\shade[shading=ball, ball color=black!50!] (5,10.76) circle (.245);
			\shade[shading=ball, ball color=black!50!] (6,10.76) circle (.245);
			\shade[shading=ball, ball color=black!50!] (7,10.76) circle (.245);
		\end{tikzpicture} 
	\end{center}
	\caption{Sets of points for which the particle at $x$ jumps  to $x+1$ in the PMM dynamics.}
	\label{figure-sets}
\end{figure}

\noindent The jump mechanism from $x+1$ to $x$ is exactly the same as described above. We observe that the exact form of the jump rate is given \eqref{PMM rate}.

 Since we considered a space rescaling $u\to u/n$, if we want to see a non-trivial evolution of the density, we need to accelerate the process by a factor $n^2$. In this case, we will have to take the so-called diffusive time scale $tn^2$, so that we analyze the process $\{\eta_{tn^2}; t\in[0,T]\}$. Since we only allow one particle per site, the state space of our Markov process is the set $\{0,1\}^{\{1,\ldots, n-1\}}$ and when $\eta_{tn^2}(x)=1$ or $0$ the site $x$ is occupied or empty, respectively.

To convince the reader that our choice of the bulk dynamics is related to the porous medium equation, we present an heuristic argument as follows. First, we observe that the dynamics introduced above, when $\alpha=\beta=\rho$, has a unique invariant probability, the Bernoulli product measure denoted by $\nu^{n}_\rho$.  In fact, in this case,  this measure is reversible. Under this probability measure, the random variables $\eta(x)$ are all independent and Bernoulli distributed with parameter $\rho$. Moreover, the infinitesimal generator $L_n^m$ of this process, which is defined in  \eqref{eq:gen_full}, is such that when we look at the {occupation variable at site $x$,  $\eta(x)$,} it acts as $L^m_n\eta(x)=j_{x-1,x}^m(\eta)-j^m_{x,x+1}(\eta)$, where $j_{x,x+1}^m(\eta)$ denotes the instantaneous current associated to the bond $(x,x+1)$, and in this case, it writes as in \eqref{eq:current}. Putting this together, we see that \begin{equation}\label{heuristics}
L_n^m \eta(x)=\tau_{x-1}h^m(\eta)+\tau_{x+1}h^m(\eta)-2\tau_xh^m(\eta),\end{equation}
 which is close to a discrete Laplacian of the function $\tau_xh^m$. The definition of $\tau_xh^m$ is  given in \eqref{shift}. This gives us an intuition that the equation we are looking for is a parabolic equation of the form  $\partial_t\rho_t(u)=\Delta \langle h^m\rangle_{\rho_t(u)}$, where $\langle\cdot\rangle_{\rho_t(u)}$ denotes the expectation with respect to the Bernoulli product measure  $\nu_{\rho_t(\cdot)}$. Now for the special choice of our dynamics, a simple computation shows that $\langle h^m\rangle_{\rho_t(u)}=(\rho_t(u))^m$, and from this, we recover the porous medium equation.
 
 We note that the PMM dynamics just described has the peculiarity that once we do not have  $m-1$ particles in one of the sets of points given  in \eqref{eq:sets}, then  the particle at $x$  is blocked. To overcome this difficulty, as in \cite{BPGN,patricia}, we perturb the PMM dynamics with a SSEP dynamics in such a way that we allow particles to move to  the neighbouring sites at a rate that vanishes as $n\to \infty$, so that macroscopically, at the level of the partial differential equations,  we do not see the effect of this perturbation.  We point that this hand-waving argument just described is rigorously proved in \cite{BPGN}, and we refer the reader to that article for more details on that result. There, it was proved that the density of particles of this model is ruled by the unique weak solution of \eqref{eq:Robin} for $\theta=1$, \eqref{eq:Neumann} for $\theta>1$ and \eqref{eq:Dirichlet} for $\theta\in[0,1)$.
In particular, in \cite{BPGN} it was proved, from the microscopic dynamics, that the density of particles  satisfies all the requirements in the notions of weak solutions given in Definitions {\ref{Def. Robin}, \ref{Def.Neumann}, and  \ref{Def. Dirichlet}.
 By using this underlying dynamics we are able to deduce an energy estimate for the solution $\rho^\kappa$
  of \eqref{eq:Robin}, which allows extracting a lot of information about the behavior of this solution close to the boundary of $[0,1]$. 
This is the content of Proposition \ref{prop:energy_estimate}.

 We believe that our results can be extended to  more general equations of the form 
 	\begin{equation*}\label{eq:intro_general}
	\begin{cases}
	&\partial_{t}\rho_{t}^\kappa(u)= \Delta\, \Phi({\rho} _{t}^\kappa(u)), \quad (t,u) \in (0,T]\times(0,1), \\
	&\partial_{u}\Phi(\rho_{t}^\kappa(0))=\kappa(\Phi(\rho_{t}^\kappa(0)) -\alpha), \quad t\in(0,T],\\
	&\partial_{u}\Phi( \rho_{t}^\kappa(1))=\kappa(\beta -\Phi(\rho_{t}^\kappa(1)))\,, \quad t\in(0,T],
	\end{cases}
	\end{equation*}
where $\Phi$  is  a $C^1([0,1])$ function and the $L^2$-norm of  $\rho_t^\kappa$  is uniformly  bounded on $\kappa$. Here we considered the case $\Phi(\rho^\kappa)=(\rho^\kappa)^m$ and $\rho^\kappa\in[0,1]$, since it was the natural equation arising from our microscopic system. When dealing with, for example, zero-range dynamics or exclusion dynamics with more than one particle per site, this is the natural candidate equation that one should obtain in the hydrodynamic limit for those models, see \cite{fmn,KL}. We intend to pursue the extension of our results to those cases in the near future, but one of the obstacles to be overcome is obtaining a uniform bound on $\kappa$ for the $L^2$-norm of $\rho_t^\kappa$.
 
Here follows an outline of this article. In Section \ref{sec:statement_results}, we introduce notation, and we state our main results, namely the energy estimate and its consequences, and our convergence result for $\rho^\kappa$. In Section \ref{sec:main_result}, we prove our convergence result, that is, {the weak solution of \eqref{eq:Robin}, $\rho^\kappa$,} converges as $\kappa\to 0$ or $\kappa\to \infty$ to the weak solution of \eqref{eq:Neumann} or \eqref{eq:Dirichlet}, respectively. In Section \ref{sec:prop:energy_consequence}, we derive several consequences of the energy estimate. In Section \ref{IPS} we prove, by means of the Markovian dynamics, that  $\rho^\kappa$ satisfies the energy estimate. Finally, in the Appendix, we present some auxiliary results that are needed throughout the article.

	\section{Statements of results}\label{sec:statement_results}

	In this section we describe the different partial differential equations  and the respective notion of weak solutions that we will consider along this article. For that, fix an interval $ I \subset \mathbb R$, $T>0$ and $n,p \in \mathbb{N}\cup\{0\}$. We denote by  $C^{n,p}([0, T] \times  I)$, the set of all real-valued functions defined on $[0,T] \times I $ that are $n$ times  differentiable on the first variable and $p$ times differentiable on the second variable (with continuous derivatives); by $C^{n}([0,1])$ (resp. $C^{n}_{c}((0,1))$) the set of all $n$ times continuously differentiable real-valued functions defined on $[0,1]$ (resp. and with compact support in $(0,1)$); by  $C_c^{n,p}([0,T]\times(0,1))$, the set of all real-valued functions $G\in C^{n,p}([0,T]\times(0,1))$ with compact support in $[0,T]\times(0,1)$; by  $C^{n,p}_0 ([0,T] \times [0,1])$, the set of all real-valued functions $G \in C^{n,p}([0, T] \times[0, 1])$ such that $G_{s}(0)=G_{s}(1)=0$, for all $s\in[0,T]$. 
	When $n=\infty$ or $p=\infty$ it means that the function is infinitely differentiable in the corresponding  variable. We denote  by
	$\langle\cdot, \cdot\rangle$, the inner product in $L^2([0,1])$ with corresponding norm $\| \cdot \|_{2}$.
	
		\begin{definition}
		Let $G,H \in L^{2}([0,T]\times[0,1])$. We denote the inner product in $L^{2}([0,T]\times[0,1])$ by
		\begin{equation}\label{double bracket}
		\dl G,H \dr := \int_{0}^{T}\langle G_{s},H_s\rangle \,\,ds.
		\end{equation}
	\end{definition}
Now we introduce the  Sobolev space $\mathcal H^1.$

	\begin{definition}[Sobolev space]
		\label{Def. Sobolev space}
		Let $\mathcal{H}^{1}$ be the set of all locally summable functions $\varphi: (0,1) \to \mathbb{R}$ such that there exists a function $\partial_{u} \varphi \in L^{2}([0,1])$ satisfying
		$$\langle  \varphi ,\partial_u g\rangle =-\langle  \partial_{u} \varphi , g\rangle,$$
		for all $g \in C_{c}^{\infty}\left((0,1)\right)$. For $\varphi \in \mathcal{H}^1$, we define the norm 
		\begin{equation*}
		\|\varphi \|^{2}_{\mathcal{H}^1} := \|\varphi\|^{2}_{2} + \|\partial_{u} \varphi \|^{2}_{2}.
		\end{equation*}  
			\end{definition}
Recall that a function $\varphi\in \mathcal H^1$ can be extended to $[0,1]$ by setting  $\varphi(0):=\varphi(0^+)$ and $\varphi(1):=\varphi(1^-)$. 
		\begin{definition}
		Let $L^2(0,T; \mathcal{H}^1)$ be the set of all measurable functions $\zeta: [0,T]\to \mathcal{H}^1$ such that
		\begin{equation}\label{sobolev norm 2}
		\|\zeta\|^{2}_{L^2(0,T;\mathcal{H}^1)} := \int_{0}^{T}\|\zeta_t\|^{2}_{\mathcal{H}^1}\, dt < \infty.
		\end{equation}
	\end{definition}

		\subsection{The porous medium equation}\label{sec: hydro eq}
		
		Along the text we fix the parameters 
				\begin{equation}\label{ab}
		\alpha,\beta\in(0,1)\;\mbox{  and  }\;m\in\mathbb N.	\end{equation}    
The restrictions on $\alpha,\beta\in(0,1)$ are  technical\footnote{It is used in entropy bound for the discrete model, see \eqref{ent}.}  and  throughout the text we will point out when the results do not need these restrictions.

	\begin{definition}[PME with a type of Robin boundary conditions]
	\label{Def. Robin}
	Let $T>0$, $\kappa >0$ and $g:[0,1]\rightarrow [0,1]$ a measurable function. We say that  $\rho^\kappa:[0,T]\times[0,1] \to [0,1]$ is a weak solution of the porous medium equation with Robin boundary conditions, see \eqref{eq:Robin}, 
	if the following conditions hold: 
	\begin{enumerate}
		\item $(\rho^\kappa)^m \in L^{2}(0,T; \mathcal{H}^{1})$; 
		\item $\rho^\kappa$ satisfies the integral equation:
		\begin{equation}\label{eq:Robin integral}
		\begin{split}
		\langle \rho_{t}^\kappa,  G_{t}\rangle  -\langle g,  G_{0}\rangle &- \int_0^t\langle \rho_{s}^\kappa,( \partial_s G_{s}+(\rho_s^\kappa)^{m-1} \Delta G_s ) \rangle   \, ds
		\\&+\int^{t}_{0}  \big\{  ({\rho_s}^\kappa)^m(1) \partial_u G_{s}(1)- ({\rho_s}^\kappa)^m(0) \partial_u G_{s}(0) \big\} \, ds\\
		&-  \kappa\int^{t}_{0} \big\{ G_{s}(0)(\alpha -\rho_{s}^\kappa(0)) + G_{s}(1)(\beta -\rho_{s}^\kappa(1)) \big\} \, ds=0,
		\end{split}   
		\end{equation}
		for all $t\in [0,T]$ and all functions $G \in C^{1,2} ([0,T]\times[0,1])$. 
	\end{enumerate}
\end{definition}

	\begin{definition}[PME with Neumann boundary conditions]
		\label{Def.Neumann}
		Let $T>0$ and $g:[0,1]\rightarrow [0,1]$ a measurable function. We say that  $\rho:[0,T]\times[0,1] \to [0,1]$ is a weak solution of the porous medium equation with Neumann boundary conditions, see \eqref{eq:Neumann}, if  $\rho^m \in L^{2}(0,T; \mathcal{H}^{1})$ and $\rho$ satisfies the integral equation \eqref{eq:Robin integral} with $\kappa=0$.
	\end{definition}
	
	\begin{definition}[PME with Dirichlet boundary conditions]
		\label{Def. Dirichlet}
		Let $T>0$ and $g:[0,1]\rightarrow [0,1]$ a measurable function. We say that $\rho:[0,T]\times[0,1] \to [0,1]$ is a weak solution of the porous medium equation with  Dirichlet boundary conditions, see \eqref{eq:Dirichlet},
		if the following conditions hold:
		\begin{enumerate}
			\item $\rho^{m} \in L^{2}(0,T; \mathcal{H}^{1})$;
			\item $\rho$ satisfies the integral equation:
			\begin{equation}\label{eq:Dirichlet integral}
			\begin{split}
			\langle \rho_{t} , G_{t}\rangle  -\langle g ,  G_{0}\rangle - \int_0^t\langle \rho_{s}, (\partial_s G_{s} + (\rho_s)^{m-1} \Delta G_{s} ) \rangle \, ds
			+ \int_0^t\big\{\beta^m\partial_uG_s(1) -\alpha^m\partial_uG_s(0)   \big\}\,ds =0,
			\end{split}   
			\end{equation}
			for all $t\in [0,T]$ and all functions $G \in C_0^{1,2} ([0,T]\times[0,1])$;
			\item for almost every $t\in(0,T]$, $\rho_{t}(0)=\alpha$ and $\rho_{t}(1)=\beta$.
		\end{enumerate}
	\end{definition}

	\begin{remark} \label{eq:neumann}
		Observe that for $m=1$ the equations above become the heat equation with different  boundary conditions. 
	\end{remark} 
	
		We observe that the weak solutions of \eqref{eq:Robin}, \eqref{eq:Neumann} and  \eqref{eq:Dirichlet} in the sense given above are unique. For a proof see, for example,  Section 7 of \cite{BPGN}. We observe that in \cite{BPGN} item (3) of Definition \ref{Def. Dirichlet} was asked to hold for all time $t\in(0,T]$, but, in fact, uniqueness holds as long as we ask it to be true for almost every $t\in(0,T]$. For a deeper discussion of the porous medium equation, we refer the reader to the seminal book \cite{vazquez_book}.

	\subsection{Main results}

We now state our fundamental results. In Section \ref{s221}, we start by introducing a proper weighted $L^2$ space 
and an energy functional. If the energy of a function $\xi$ is finite, then the functional captures a lot of information about $\xi^m$, see Proposition \ref{prop:2.6}. Then, in Section \ref{s222}, we will state that the solution $\rho^\kappa$ of \eqref{eq:Robin} has finite energy, and  this is the content of Theorem  \ref{energy estimate consequence}. Finally, in Section \ref{s223}, we will state our convergence result, see Theorem \ref{main theorem}.

	\subsubsection{Energy estimate}\label{s221}

Recall that the parameters $\alpha, \beta$ and $m$ are fixed as in \eqref{ab}.
	Let $\kappa>0$ and $a,b\geq 0$.
	We define a measure $W^{\alpha,\beta}_{\kappa,a,b}$ on $[0,1]$ by
	\begin{equation*}
	W_{\kappa,a,b}^{\alpha,\beta}(du) := du + \frac{1}{k}P^{\alpha}_{m}(a)\delta_{0}(du) + \frac{1}{k}P^{\beta}_{m}(b)\delta_{1}(du), 
	\end{equation*}
	where $\delta_{z}(du)$, with $z\in \{0,1\}$, is the Dirac measure and 
	\begin{equation}\label{p}
		P^{\gamma}_{m}(\rho) = \sum_{i=0}^{m-1}\gamma^{m-1-i}\rho^{i},\mbox{ for } \gamma\in\{\alpha, \beta\}\mbox{ and } \rho\geq 0\,.
	\end{equation} 
Thus,  
\begin{equation}\label{ppositive}
P_{m}^{\gamma}(\rho)\geq \gamma^{m-1}>0,\end{equation}
because $\rho\geq 0$ and $\gamma >0$, since $\gamma\in\{\alpha, \beta\}$ and  the restrictions of \eqref{ab}. If $\gamma=0$, then $P_{m}^{0}(\rho)=\rho^{m-1}$ so that the inequality \eqref{ppositive} would not hold.	Observe that, since
	 \begin{equation}\label{newton_binom}
	\gamma^m-\rho^m= (\gamma- \rho)\sum_{i=0}^{m-1}\gamma^{m-1-i}\rho^{i}=	(\gamma- \rho)P^{\gamma}_{m}(\rho) \,,
	\end{equation} 
then
	\begin{equation}\label{pp}
\gamma- \rho	= \frac{\gamma^m-\rho^m}{P^{\gamma}_{m}(\rho) },
	\end{equation}
for	$\gamma\in\{\alpha, \beta\}$ and for all $\rho\geq 0$.  The measure $W^{\alpha,\beta}_{\kappa,a,b}$ is the sum of the Lebesgue measure and Dirac measures concentrated on $0$  and $1$ with weights $\frac{1}{k}P^{\alpha}_{m}(a)$  and  $\frac{1}{k}P^{\beta}_{m}(b)$, respectively. 
	For 
	$g\in L^2([0,1])$ { such that $g(0)$ and $g(1)$ are both well-defined},
	we denote
	\begin{equation}\label{ppp} \| g \|^{2}_{W^{\alpha,\beta}_{\kappa,a,b}}:=\int_0^1g^2(u)\, W_{\kappa,a,b}^{\alpha,\beta}(du) \,.\end{equation}

		\begin{definition}\label{space_weighted}
	Let $\mathcal B$ be the space of {measurable} functions $\xi:[0,T]\times [0,1]\to {\mathbb [0,\infty)}$ such that the applications $s\mapsto\xi_s(0)$ and $s\mapsto\xi_s(1)$ are measurable and  bounded.
	\end{definition}

	\begin{definition}\label{L2 space weight}
		Let $\xi\in \mathcal B$. 
		For any $\kappa>0$, we denote by $L_{\kappa, \xi}^{2}([0,T]\times[0,1])$ the Hilbert space composed of all measurable functions $H:[0,T]\times[0,1]\to \mathbb{R}$ such that
	\begin{equation}\label{weighted norm}
		\begin{split}
		\dl H,H \dr_{\kappa,\xi}^{\alpha,\beta} &:= 	
	\int_{0}^{T} \| H_{s} \|^{2}_{W_{\kappa,\xi_s(0),\xi_s(1)}^{\alpha,\beta}} \, ds\\
	&=\dl H,H \dr + \int_{0}^{T}\left\{\dfrac{P_{m}^{\alpha}(\xi_s(0))}{\kappa}H_s^{2}(0) + \dfrac{P_{m}^{\beta}(\xi_s(1))}{\kappa}H_s^{2}(1)\right\}\,ds<\infty, \\
		\end{split}
		\end{equation}
		where $\dl \cdot, \cdot \dr$ is defined in \eqref{double bracket} and for $\gamma \in \{\alpha,\beta\}$, $P^{\gamma}_{m}$ was defined  in \eqref{p}.
	\end{definition}
Moreover, $L_{\kappa,\xi}^{2}([0,T]\times[0,1]) \subseteq L^{2}([0,T]\times[0,1])$. 	
	\begin{definition}
		For a  function $\xi$ such that $\xi^m\in L^2([0,T]\times [0,1])$,  we define the functional $\mathcal{T}_{\xi,m}^{\alpha,\beta}$   on $C^{0,1}([0,T]\times [0,1])$ by
		\begin{equation}\label{functional}
		\begin{split}
		\mathcal{T}_{\xi,m}^{\alpha,\beta}(H):= \dl\, \xi^m, \,\partial_u H \dr + \int_{0}^{T} \big\{\alpha^m H_s(0) - \beta^m H_s(1)\big\}\, ds.
		\end{split}
		\end{equation}
	\end{definition}

	Let us define the energy functional: 
	
	\begin{definition}[Energy functional] \label{enerrrrgy}For each $\kappa>0$ and  $c>0$ fixed, we define the functional $\mathcal{E}_{m,\kappa,c}^{\alpha, \beta} $ which acts on functions  $\xi\in\mathcal B$ such that $\xi^m\in L^2([0,T]\times [0,1])$ as
		\begin{equation}\label{energy_functional}
		\mathcal{E}_{m,\kappa,c}^{\alpha, \beta} (\xi) :=	\sup _{H\in C^{0,1}([0,T]\times [0,1])}\left\{ \mathcal{T}_{\xi,m}^{\alpha,\beta}(H) - c\<\!\<H, H\>\!\>_{\kappa,\xi}^{\alpha,\beta}\right\} .
		\end{equation}
	\end{definition}

	Fortunately, by means of estimating  the energy functional $\mathcal{E}_{m,\kappa,c}^{\alpha, \beta} (\cdot) $ we are able to obtain  a lot of information about $\xi^{m}$, which is given by the next proposition. 
	\begin{proposition}\label{prop:2.6}
	Let $\xi\in\mathcal B$ such that $\xi^m\in L^2([0,T]\times [0,1])$ and $\mathcal{E}_{m,\kappa,c}^{\alpha,\beta}(\xi) \leq M_0<\infty,$ for some $\kappa>0$, $c>0$, and $M_0>0$. Then, there exists $\partial_u \xi^m\in L_{\kappa,\xi}^{2}([0,T]\times[0,1])$ such that for all $H \in C^{0,1}([0,T]\times[0,1])$
	\begin{equation}\label{III}
	\mathcal{T}_{\xi,m}^{\alpha,\beta}(H) = -\dl \partial_u \xi^m, H \dr_{\kappa,\xi}^{\alpha,\beta} ,
	\end{equation}
	and $\xi^m \in L^{2}(0,T; \mathcal{H}^1).$ Moreover,  
		\begin{equation}\label{energy exp!!!!}
		\begin{split}
		\mathcal{E}_{m,\kappa,c}^{\alpha, \beta}(\xi)=
		\,\frac{1}{4c} \<\!\< \partial_u\xi^m, \partial_u\xi^m \>\!\>_{\kappa,\xi}^{\alpha,\beta} ,
		\end{split}
		\end{equation} and  
			\begin{equation}\label{boundaryconditions}
		\begin{split}
		\partial_u(\xi_s)^{m}(0)\,\,P_m^\alpha(\xi_s(0) )=\kappa\,\big((\xi_s)^m(0) - \alpha^m\big)\quad \textrm{and}\quad 
		\partial_u(\xi_s)^{m}(1)\, \,P_m^\alpha(\xi_s(0) )=\kappa\,\big(\beta^m -(\xi_s)^m(1)\big),
		\end{split}
		\end{equation} for almost every {$s\in(0,T]$}. 
	\end{proposition}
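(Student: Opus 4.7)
The plan is to deduce boundedness of $\mathcal{T}_{\xi,m}^{\alpha,\beta}$ from the finite-energy hypothesis, apply Riesz representation to produce the claimed $\partial_u\xi^m$, identify the distributional $u$-derivative to obtain $\xi^m\in L^2(0,T;\mathcal H^1)$, and finally integrate by parts to extract the boundary identities \eqref{boundaryconditions}.

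First, plugging $\lambda H$ into the supremum defining $\mathcal{E}_{m,\kappa,c}^{\alpha,\beta}(\xi)$, for every $\lambda\in\mathbb R$ and every $H\in C^{0,1}([0,T]\times[0,1])$ one has $\lambda\,\mathcal{T}_{\xi,m}^{\alpha,\beta}(H)-c\lambda^2\dl H,H\dr_{\kappa,\xi}^{\alpha,\beta}\le M_0$; optimizing in $\lambda$ yields
\begin{equation*}
|\mathcal{T}_{\xi,m}^{\alpha,\beta}(H)|\;\le\;2\sqrt{cM_0}\,\sqrt{\dl H,H\dr_{\kappa,\xi}^{\alpha,\beta}}.
\end{equation*}
The positivity bound \eqref{ppositive} (which requires $\alpha,\beta\in(0,1)$) together with the boundedness of $\xi_s(0),\xi_s(1)$ built into Definition \ref{space_weighted} makes the boundary weights $\kappa^{-1}P_m^\alpha(\xi_s(0))$ and $\kappa^{-1}P_m^\beta(\xi_s(1))$ uniformly comparable to positive constants, so $C^{0,1}([0,T]\times[0,1])$ is dense in the Hilbert space $L^2_{\kappa,\xi}([0,T]\times[0,1])$. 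Hence $\mathcal{T}_{\xi,m}^{\alpha,\beta}$ extends by continuity, and Riesz representation furnishes a unique element, which I christen $-\partial_u\xi^m$, satisfying \eqref{III}.

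The energy identity \eqref{energy exp!!!!} follows by completing the square: using \eqref{III},
\begin{equation*}
\mathcal{T}_{\xi,m}^{\alpha,\beta}(H)-c\dl H,H\dr_{\kappa,\xi}^{\alpha,\beta} = -c\,\dl H+\tfrac{1}{2c}\partial_u\xi^m,\, H+\tfrac{1}{2c}\partial_u\xi^m\dr_{\kappa,\xi}^{\alpha,\beta}+\tfrac{1}{4c}\dl \partial_u\xi^m,\partial_u\xi^m\dr_{\kappa,\xi}^{\alpha,\beta},
\end{equation*}
whose supremum over $H\in C^{0,1}$ equals $(4c)^{-1}\dl \partial_u\xi^m,\partial_u\xi^m\dr_{\kappa,\xi}^{\alpha,\beta}$ by the density argument above. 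For the standard Sobolev regularity, I restrict \eqref{III} to $H\in C^\infty_c((0,T)\times(0,1))$: the boundary integrals in both $\mathcal{T}_{\xi,m}^{\alpha,\beta}(H)$ and in $\dl \partial_u\xi^m,H\dr_{\kappa,\xi}^{\alpha,\beta}$ vanish, leaving $\dl \xi^m,\partial_u H\dr = -\dl\partial_u\xi^m,H\dr$. This shows that the bulk restriction of $\partial_u\xi^m$ is the distributional $u$-derivative of $\xi^m$, which together with $\xi^m\in L^2([0,T]\times[0,1])$ gives $\xi^m\in L^2(0,T;\mathcal H^1)$.

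For the boundary identities, I exploit the one-dimensional Sobolev embedding $\mathcal H^1\hookrightarrow C([0,1])$, which provides for almost every $s\in(0,T]$ well-defined traces $\xi^m_s(0),\xi^m_s(1)$ and the classical spatial integration-by-parts formula applied to $H\in C^{0,1}$. Substituting this formula into the definition of $\mathcal{T}_{\xi,m}^{\alpha,\beta}(H)$ and equating with the right-hand side of \eqref{III}, the bulk inner products $\dl \partial_u\xi^m,H\dr$ cancel and what remains is
\begin{equation*}
\int_0^T\bigl\{[\alpha^m-\xi^m_s(0)]H_s(0)+[\xi^m_s(1)-\beta^m]H_s(1)\bigr\}\,ds = -\int_0^T\!\left\{\tfrac{P_m^\alpha(\xi_s(0))}{\kappa}\partial_u\xi^m_s(0)\,H_s(0)+\tfrac{P_m^\beta(\xi_s(1))}{\kappa}\partial_u\xi^m_s(1)\,H_s(1)\right\}ds
\end{equation*}
for every $H\in C^{0,1}$. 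Choosing, for instance, $H_s(u)=\phi(s)(1-u)$ (resp.\ $H_s(u)=\phi(s)u$) with $\phi$ ranging over a dense family of continuous functions of $s$ separates the two endpoints, and the resulting pointwise equalities are exactly \eqref{boundaryconditions}. The main obstacle is precisely this interlocking density/trace step: the abstract Riesz representative a priori has independent bulk and boundary components, and identifying the latter with traces of a genuine $\mathcal H^1$-function requires \eqref{ppositive} (crucially $\alpha,\beta>0$) to make the boundary part of $L^2_{\kappa,\xi}$ into an ordinary $L^2([0,T])$ space, after which one-dimensional integration by parts closes the argument.
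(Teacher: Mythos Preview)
Your proof is correct and follows the same architecture as the paper's: extend $\mathcal{T}_{\xi,m}^{\alpha,\beta}$ by density to the weighted space $L^2_{\kappa,\xi}$, apply Riesz representation, restrict to compactly supported test functions to identify the bulk part as the weak derivative, and then read off the boundary relations. Two execution choices are cleaner than the paper's. For \eqref{energy exp!!!!} you complete the square in one stroke, whereas the paper bounds above via Young's inequality and then bounds below by approximating $-\tfrac{1}{2c}\partial_u\xi^m$ by a sequence in $C^{0,1}$; both rely on the same density, but your version is shorter. For \eqref{boundaryconditions} you invoke the $\mathcal H^1$ integration-by-parts formula directly (this is Lemma~\ref{IIP} in the paper) and separate the endpoints with the linear tests $\phi(s)(1-u)$ and $\phi(s)u$; the paper instead inserts piecewise-linear hat functions $H^\varepsilon$ supported on $[0,\varepsilon]$ (resp.\ $[1-\varepsilon,1]$) and passes $\varepsilon\to 0$ via dominated convergence. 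Your route avoids the limiting argument entirely. One small remark: your claim that the density of $C^{0,1}$ in $L^2_{\kappa,\xi}$ requires the lower bound \eqref{ppositive} is not quite right---the paper's density proof (Lemma~\ref{dense}) only uses the \emph{upper} bound $P_m^\gamma\le m$, and indeed the paper notes right after the proposition that the result holds for $\alpha,\beta\in[0,1]$.
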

We observe that last result holds for $\alpha,\beta\in[0,1]$.
\begin{remark}
	In particular, since we have   assumed $\alpha, \beta >0$ (see \eqref{ab}), from \eqref{pp} and \eqref{ppositive},  the boundary conditions in \eqref{boundaryconditions} become
	\begin{equation}\label{boundaryconditions1}
	\begin{split}
	\partial_u(\xi_s)^{m}(0)=\kappa(\xi_s(0) - \alpha)\quad \textrm{and}\quad 
	\partial_u(\xi_s)^{m}(1)=\kappa(\beta -\xi_s(1)),
	\end{split}
	\end{equation} for almost every {$s\in(0,T]$}.
\end{remark}

\subsubsection{Properties of the weak solution of \eqref{eq:Robin}.} \label{s222}
In the next theorem, we state that  the unique weak solution $\rho^\kappa$ of \eqref{eq:Robin} has finite energy, and from the last proposition we obtain information about $(\rho^\kappa)^{m}$. The proof of the next theorem is quite long and it is presented in  Section \ref{IPS}. The idea is to consider an underlying interacting particle system of exclusion type, whose hydrodynamic limit is ruled by the weak solution of \eqref{eq:Robin} and  from its properties, we can prove the energy bound \eqref{energy estimate good}.
	\begin{theorem}[Energy estimate]\label{energy estimate consequence}
		For any $\kappa>0$, there exists a constant $c>0$ such that the unique weak solution $\rho^{\kappa}: [0,T]\times[0,1]\to[0,1]$ of \eqref{eq:Robin} satisfies the energy estimate:
		\begin{equation}\label{energy estimate good}
		\mathcal E_{m,\kappa,c}^{\alpha, \beta}(\rho^\kappa)\leq M_0,
		\end{equation}
		where $M_0$ is a constant that does not depend on $\kappa$. As a consequence, for all $\kappa>0$, the weak solution $\rho^{\kappa}$ satisfies the boundary conditions:
		\begin{equation} \label{eq:Robin_bc}
		\begin{split}
		\partial_u(\rho_s^{\kappa})^{m}(0)= \kappa(\rho_s^{\kappa}(0)-\alpha) \quad\textrm{and}\quad
		\partial_u(\rho_s^{\kappa})^{m}(1) = \kappa(\beta - \rho_s^{\kappa}(1)),
		\end{split}
		\end{equation}
for almost every $s\in(0,T]$ and the set $\left\{ (\rho^\kappa)^{m}:\, \kappa > 0 \right\}$ is bounded in $L^{2}(0,T; \mathcal{H}^1)$.
	\end{theorem}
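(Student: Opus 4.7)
The plan is to exploit the microscopic dynamics described in the introduction---whose hydrodynamic limit was shown in \cite{BPGN} to be $\rho^\kappa$---in order to derive the energy bound, and then simply invoke Proposition \ref{prop:2.6}. The argument breaks into three pieces.

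First, I work at the microscopic level with the particle system $\{\eta^n_{tn^2} : t\in[0,T]\}$ on $\{0,1\}^{\{1,\ldots,n-1\}}$ generated by $L_n^m$, taking $\theta=1$ so that the hydrodynamic limit is exactly $\rho^\kappa$. For a fixed test function $H\in C^{0,1}([0,T]\times[0,1])$ I introduce the microscopic counterpart $\mathcal T_n(H)$ of $\mathcal T^{\alpha,\beta}_{\xi,m}(H)$: a functional built from the instantaneous currents $j^m_{x,x+1}$ paired with $H_s(x/n)$, together with boundary contributions involving $\alpha^m H_s(0)$ and $\beta^m H_s(1)$ coming from the Glauber pieces at rate $\kappa\alpha/n^\theta$, $\kappa(1-\alpha)/n^\theta$, and their analogs at the right boundary. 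Applying Feynman--Kac together with the entropy inequality with respect to a product Bernoulli reference measure $\nu^n_{\rho}$ (here I use the restriction $\alpha,\beta\in(0,1)$ to guarantee finite entropy), I obtain an estimate of the form
\begin{equation*}
 \mathbb E_{\mu_n}\bigl[\mathcal T_n(H)\bigr] - c\,\mathbb E_{\mu_n}\bigl[\|H\|^2_{n,\kappa}\bigr] \leq M_0,
\end{equation*}
where $\|H\|^2_{n,\kappa}$ is the discrete analogue of the weighted norm \eqref{weighted norm}. The boundary weights $P^\gamma_m/\kappa$ are produced by Taylor expanding $\log(1+O(\kappa/n))$ inside the exponential martingale associated with the boundary Glauber rates, while the bulk contribution, via summation by parts applied to $H$, yields the non-linear term $h^m(\tau_x\eta)$ paired against $\partial_u H$. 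The constant $M_0$ is independent of $n$, $\kappa$ and $H$.

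Second, I pass to the limit $n\to\infty$. By the hydrodynamic result of \cite{BPGN} the empirical measures converge to $\rho^\kappa_t(u)\,du$; combined with the standard replacement lemma applied to the non-linear current---which at the macroscopic scale replaces $h^m(\tau_x\eta)$ by $(\rho^\kappa)^m$, with the PMM-by-SSEP perturbation of order $n^{a-2}$ vanishing in the limit---the weak limit of the microscopic inequality reads
\begin{equation*}
\mathcal T^{\alpha,\beta}_{\rho^\kappa,m}(H) - c\,\dl H, H\dr^{\alpha,\beta}_{\kappa,\rho^\kappa} \leq M_0.
\end{equation*}
Taking the supremum over $H\in C^{0,1}([0,T]\times[0,1])$ delivers \eqref{energy estimate good}.

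Third, the consequences are direct from Proposition \ref{prop:2.6}: finite energy produces $\partial_u(\rho^\kappa)^m\in L^2_{\kappa,\rho^\kappa}([0,T]\times[0,1])$, the identity \eqref{III}, the embedding $(\rho^\kappa)^m\in L^2(0,T;\mathcal H^1)$, and, through the remark following Proposition \ref{prop:2.6}, the Robin boundary identities \eqref{eq:Robin_bc}. For the uniform $L^2(0,T;\mathcal H^1)$ bound of $\{(\rho^\kappa)^m\}_{\kappa>0}$, I use \eqref{energy exp!!!!}: the right-hand side dominates the unweighted Lebesgue part of the norm, so $\|\partial_u(\rho^\kappa)^m\|^2_{L^2([0,T]\times[0,1])}\leq 4cM_0$, while $\|(\rho^\kappa)^m\|^2_{L^2([0,T]\times[0,1])}\leq T$ since $\rho^\kappa\in[0,1]$, and together these yield the $\kappa$-uniform bound.

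The hard part is the first step: I must produce the microscopic bound with a cost \emph{exactly} matching $\dl H,H\dr^{\alpha,\beta}_{\kappa,\rho^\kappa}$, uniformly in both $n$ and $\kappa$. The sensitive point is that the Glauber rates scale as $\kappa/n$, so one order too low or too high in the Taylor expansion of the exponential martingale would cause blow-up in either the $\kappa\to 0$ or the $\kappa\to\infty$ regime; extracting precisely the prefactor $P^\gamma_m(\rho^\kappa_s(\cdot))/\kappa$ requires matching second-order terms to the derivative of the Bernoulli log-Radon--Nikodym, and combining this with the replacement lemma for the non-linear PMM current is the technical core of Section \ref{IPS}.
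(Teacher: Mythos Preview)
Your overall architecture is correct and matches the paper: the energy bound is obtained from the particle system with $\theta=1$ via entropy inequality plus Feynman--Kac, replacement lemmas pass nonlinear microscopic currents to $(\rho^\kappa)^m$, and then Proposition~\ref{prop:2.6} is invoked exactly as you describe. Your third step (the consequences: the Robin identities from \eqref{boundaryconditions1}, and the uniform $L^2(0,T;\mathcal H^1)$ bound via \eqref{energy exp!!!!} plus $0\le\rho^\kappa\le1$) is precisely the argument in the paper.

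However, the mechanism you give for the appearance of the boundary weight $P^\gamma_m/\kappa$ is not the one that actually works, and this is the delicate point. There is no Taylor expansion of $\log(1+O(\kappa/n))$ in an exponential martingale. What happens instead is the following. After Feynman--Kac one lands on the variational expression $\sup_f\{\cdots + n\langle L_n^m\sqrt f,\sqrt f\rangle_{\nu}\}$, and the Dirichlet-form bound contributes in particular the boundary piece $-\tfrac{n}{4}\tfrac{\kappa}{n}F^\alpha_1(\sqrt f,\nu)$. The summation by parts on the bulk term produces the boundary contribution $-H_s(0)\int\tau_1 h^m(\eta)f\,d\nu$; the key algebraic identity is $\tau_1 h^m(\eta)-\alpha^m=(\eta(1)-\alpha)\,\mathcal R^\alpha_m(\eta)$, where $\mathcal R^\alpha_m$ is the microscopic analogue of $P^\alpha_m$. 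One then applies Young's inequality with parameter $A=\tfrac{\kappa}{2}$ to balance this term against the available boundary Dirichlet form $\tfrac{\kappa}{4}F^\alpha_1$, and the price paid is exactly $\tfrac{m}{\kappa}(H_s(0))^2\int \mathcal R^\alpha_m(\eta)f\,d\nu$. A separate boundary replacement lemma then identifies $\int \mathcal R^\alpha_m(\eta)f\,d\nu$ with $P^\alpha_m(\rho^\kappa_s(0))$ in the limit. Note also that because this weight involves the random quantity $\mathcal R^\alpha_m(\eta)$, the microscopic estimate cannot be written with two separated expectations as you do; the whole expression stays inside a single $\sup_f$ until the very end.

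A minor point of order: the paper actually runs the argument macro-to-micro rather than micro-to-macro. One first replaces $(\rho^\kappa)^m(u)$ and $P^\gamma_m(\rho^\kappa(0))$, $P^\gamma_m(\rho^\kappa(1))$ by mollified averages $\langle\rho^\kappa,\iota^u_\varepsilon\rangle$ so that the resulting functional is lower semicontinuous on path space, then uses $\mathbb E_{\mathbb Q}[\Phi]\le\liminf_n\mathbb E_{\mathbb Q_n}[\Phi]$, and only then performs the microscopic computation. Your reversed order can be made to work (since $\mathbb Q$ is a Dirac mass), but the lower-semicontinuity route is what makes the passage to the limit rigorous without additional effort.
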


\begin{remark}
If we had not assumed $\alpha, \beta >0$, the boundary conditions \eqref{eq:Robin_bc} would become
	\begin{equation}\label{eq:Robin_bc1}
\begin{split}
\partial_u(\rho_s^{\kappa})^{m}(0)\,\,P_m^\alpha(\rho_s^{\kappa}(0) )=\kappa\,\big((\rho_s^{\kappa})^m(0) - \alpha^m\big)\quad \textrm{and}\quad 
\partial_u(\rho_s^{\kappa})^{m}(1)\, \,P_m^\alpha(\rho_s^{\kappa}(0) )=\kappa\,\big(\beta^m -(\rho_s^{\kappa})^m(1)\big),
\end{split}
\end{equation} for almost every {$s\in(0,T]$}.
\end{remark}

The result just stated  is a generalization of the one presented in \cite{BPGN} in the following sense, in \cite{BPGN}, the estimate \eqref{energy estimate good} was derived:  for a different  functional, for other purposes than the ones we use here, for functions $H\in C_{c}^{0,1}([0,T]\times(0,1))$ and only for $m=2$.

	\subsubsection{The convergence of $\rho^\kappa$}\label{s223}
	Finally, we are able to state our convergence theorem. Once we have the energy estimates as stated in Theorem \ref{energy estimate consequence} for the solution $\rho^\kappa$ of \eqref{eq:Robin}, we are able to deduce the limit of the weak solution $\rho^\kappa$ by tuning the parameter $\kappa$ either to zero or to infinity and recover the weak solution of the porous medium equation with Neumann or Dirichlet boundary conditions, respectively.

	\begin{theorem}\label{main theorem}
		Let $g:[0,1]\to [0,1]$ be a measurable function. For each $\kappa > 0$, let $\rho^\kappa:[0,T]\times[0,1]\to [0,1]$ be the unique weak solution of \eqref{eq:Robin} with initial condition $g$.
		Then,
		\begin{equation*}
		\displaystyle \lim_{\kappa\to 0} \rho^\kappa \; = \;  \rho^0 \quad \textrm{ and } \quad \displaystyle \lim_{\kappa\to \infty} \rho^\kappa \; = \;  \rho^{\infty}
		\end{equation*}
		in $L^2([0,T]\times [0,1])$, where
		$\rho^{0}$ is the unique weak solution of \eqref{eq:Neumann}, and $\rho^{\infty}$ is the unique weak solution of \eqref{eq:Dirichlet}, both with initial condition $g$.
	\end{theorem}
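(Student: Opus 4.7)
The plan is to extract subsequential $L^2$-limit points of the family $\{\rho^\kappa\}$, identify each one as the unique weak solution of either \eqref{eq:Neumann} or \eqref{eq:Dirichlet}, and then upgrade subsequential convergence to full convergence by uniqueness. The fuel throughout is Theorem \ref{energy estimate consequence}: it says that $\{(\rho^\kappa)^m\,:\,\kappa>0\}$ is uniformly bounded in $L^2(0,T;\mathcal H^1)$, while $\rho^\kappa$ trivially takes values in $[0,1]$. Combined with the uniform $\sfrac{1}{2}$-Hölder regularity of $t\mapsto\langle\rho^\kappa_t,H_t\rangle$ stated in Proposition \ref{prop3}, a compactness argument of Aubin-Lions type yields relative compactness of $\{(\rho^\kappa)^m\}$ in $L^2([0,T]\times[0,1])$, and hence of $\{\rho^\kappa\}$, since $\rho\mapsto\rho^m$ is a homeomorphism of $[0,1]$. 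Along any sequence $\kappa_n\to 0$ (respectively $\kappa_n\to\infty$) I can therefore extract a subsequence on which $\rho^{\kappa_n}\to\rho^\star$ and $(\rho^{\kappa_n})^m\to(\rho^\star)^m$ strongly in $L^2$, with $(\rho^\star)^m\in L^2(0,T;\mathcal H^1)$ by weak lower semicontinuity.

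For $\kappa_n\to 0$, I would pass to the limit in \eqref{eq:Robin integral} tested against an arbitrary $G\in C^{1,2}([0,T]\times[0,1])$. The Robin contribution $\kappa_n\int_0^t\{G_s(0)(\alpha-\rho^{\kappa_n}_s(0))+G_s(1)(\beta-\rho^{\kappa_n}_s(1))\}\,ds$ is $O(\kappa_n)$ and vanishes; the bulk terms, after rewriting $\langle\rho^\kappa_s,(\rho^\kappa_s)^{m-1}\Delta G_s\rangle=\langle(\rho^\kappa_s)^m,\Delta G_s\rangle$, converge by the strong $L^2$ convergence of $\rho^{\kappa_n}$ and $(\rho^{\kappa_n})^m$; the boundary trace integrals $\int_0^t(\rho^{\kappa_n}_s)^m(i)\,\partial_uG_s(i)\,ds$ for $i\in\{0,1\}$ converge by the weak $L^2(0,T)$ convergence of traces, which follows from the uniform $L^2(0,T;\mathcal H^1)$-bound on $(\rho^{\kappa_n})^m$ via standard one-dimensional trace estimates, as encoded in Propositions \ref{prop2} and \ref{prop3}. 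Thus $\rho^\star$ satisfies \eqref{eq:Robin integral} with $\kappa=0$, so it is a weak solution of \eqref{eq:Neumann} in the sense of Definition \ref{Def.Neumann}, and by uniqueness $\rho^\star=\rho^0$.

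For $\kappa_n\to\infty$ I would instead test against $G\in C_0^{1,2}([0,T]\times[0,1])$, which kills the Robin term in \eqref{eq:Robin integral} identically. The crucial new input is a reverse use of the energy estimate: substituting the boundary identity \eqref{eq:Robin_bc} into \eqref{energy exp!!!!} and invoking \eqref{energy estimate good} yields
\begin{equation*}
\kappa\int_0^T P_m^\alpha(\rho^\kappa_s(0))(\rho^\kappa_s(0)-\alpha)^2\,ds+\kappa\int_0^T P_m^\beta(\rho^\kappa_s(1))(\beta-\rho^\kappa_s(1))^2\,ds\le 4cM_0
\end{equation*}
uniformly in $\kappa$. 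Since $P_m^\gamma(\cdot)\ge\gamma^{m-1}>0$ by \eqref{ppositive}, this forces $\rho^\kappa_s(0)\to\alpha$ and $\rho^\kappa_s(1)\to\beta$ in $L^2(0,T)$ as $\kappa\to\infty$, and identity \eqref{newton_binom} upgrades this to $(\rho^{\kappa_n}_s)^m(0)\to\alpha^m$ and $(\rho^{\kappa_n}_s)^m(1)\to\beta^m$ in $L^2(0,T)$. The trace terms in \eqref{eq:Robin integral} therefore converge to $\int_0^t\{\beta^m\partial_uG_s(1)-\alpha^m\partial_uG_s(0)\}\,ds$, producing exactly \eqref{eq:Dirichlet integral}, and the same $L^2$ boundary convergence also supplies item (3) of Definition \ref{Def. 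Dirichlet}; hence $\rho^\star=\rho^\infty$ by uniqueness.

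The main technical obstacle I foresee is the strong $L^2$-compactness of $\{\rho^\kappa\}$. Weak compactness is immediate from the energy bound, but the nonlinear bulk term $\langle\rho^\kappa_s,(\rho^\kappa_s)^{m-1}\Delta G_s\rangle$ can only be matched to the corresponding expression for $\rho^\star$ once strong $L^2$ convergence is in hand. Running an Aubin-Lions argument starting only from the $L^2(0,T;\mathcal H^1)$ spatial bound on $(\rho^\kappa)^m$ and the weak $\sfrac{1}{2}$-Hölder time regularity of Proposition \ref{prop3}, and then transferring compactness from $(\rho^\kappa)^m$ back to $\rho^\kappa$ using the uniform pointwise bound $\rho^\kappa\in[0,1]$, is where most of the technical effort is concentrated.
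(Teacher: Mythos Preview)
Your plan is essentially the paper's: extract subsequential $L^2$ limits, verify they satisfy the target weak formulation using the energy estimate of Theorem~\ref{energy estimate consequence}, and conclude by uniqueness. Your Dirichlet argument via the bound $\kappa\int_0^T P_m^\gamma(\rho^\kappa_s(\cdot))(\rho^\kappa_s(\cdot)-\gamma)^2\,ds\le 4cM_0$ (read off from \eqref{energy exp!!!!} and \eqref{eq:Robin_bc}) is exactly what drives the paper's Proposition~\ref{prop:3.5}.

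Two tactical differences are worth flagging. First, the paper does \emph{not} pass to the limit in the trace terms $\int_0^t(\rho^\kappa_s)^m(i)\,\partial_uG_s(i)\,ds$ directly. Instead it integrates by parts (Lemma~\ref{IIP}) to rewrite \eqref{eq:Robin integral} as \eqref{integral1}, where all boundary traces are absorbed into the bulk term $\int_0^t\langle\partial_u(\rho^\kappa_s)^m,\partial_uH_s\rangle\,ds$; convergence of that term is handled by weak convergence of $\partial_u(\rho^{\kappa_j})^m$ in $L^2$ (Proposition~\ref{prop5}), and only at the end does one integrate by parts back. Your direct route via weak $L^2(0,T;\mathcal H^1)$ convergence of $(\rho^{\kappa_n})^m$ and continuity of the trace operator is also valid, but Propositions~\ref{prop2} and~\ref{prop3} are not the right citations---neither addresses trace convergence. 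Second, the weak formulations require the integral identity for \emph{every} $t\in[0,T]$, whereas strong $L^2$ convergence only gives $\langle\rho^{\kappa_n}_t,G_t\rangle\to\langle\rho^\star_t,G_t\rangle$ for a.e.\ $t$; the paper closes this via the Arzel\`a--Ascoli/Riesz argument of Proposition~\ref{prop4}, which your sketch omits.

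Your identification of strong $L^2$ compactness as the main technical obstacle is apt: the paper's text asserts that boundedness of $\{\rho^\kappa\}$ in $L^2$ yields a convergent subsequence, but boundedness alone gives only weak compactness, while Propositions~\ref{prop2} and~\ref{prop:3.5} use strong convergence explicitly. Your proposed Aubin--Lions/Simon route---combining the uniform $L^2(0,T;\mathcal H^1)$ bound on $(\rho^\kappa)^m$ with the time equicontinuity from Proposition~\ref{prop3} (or equivalently a dual-space bound on $\partial_t\rho^\kappa$ read off from \eqref{eq:Robin integral}), then transferring to $\rho^\kappa$ via the homeomorphism $r\mapsto r^m$ on $[0,1]$---is the honest way to supply this.
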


	\section{Proof of Theorem \ref{main theorem}}\label{sec:main_result}
	The aim of this section is to prove Theorem \ref{main theorem}. First, we present the proof of the theorem, and then we present all the auxiliary results.
	
	\begin{proof}[Proof of Theorem \ref{main theorem}]
	 We need to show that any limit point, in $L^{2}([0,T]\times[0,1])$, of the set $\{\rho^{\kappa}: \kappa >0\}$ obtained when $\kappa \to 0$ (resp. $\kappa \to \infty$)   satisfies items $(1)$ and $(2)$ of Definition \ref{Def.Neumann} (resp. items $(1)$, $(2)$ and $(3)$ of  Definition \ref{Def. Dirichlet})).
	 
	Since {$\rho^\kappa_t\in[0,1]$}, for all $t\in[0,T]$, the set  $\left\{\rho^{\kappa}: \, \kappa >0 \right\}$ is bounded in  $L^{2}([0,T]\times[0,1])$ and, as a consequence, any sequence of $\left\{\rho^{\kappa}: \, \kappa >0 \right\}$ has a convergent subsequence in $L^{2}([0,T]\times[0,1])$. 
Thus, let us consider  $\{\kappa_{j}\}_{j\in\mathbb{N}}$ such that $\kappa_{j}\to 0$ (resp. $\kappa_{j}\to \infty$).  Then, there exist
			$\{\kappa_{j_\ell}\}_{\ell\in\mathbb{N}}\subset \{\kappa_{j}\}_{j\in\mathbb{N}}$ and  $\rho^{\star}\in L^{2}([0,T]\times[0,1])$ such that $\rho^{\kappa_{j_\ell}}$ converges to $\rho^{\star}$ in $L^{2}([0,T]\times[0,1])$, when $\ell\to\infty$.  
			
			Fixed $m\in\mathbb N$.
			We start by observing  that  $\rho^{\star}$ satisfies  item $(1)$ of Definition \ref{Def.Neumann} (resp. Definition \ref{Def. Dirichlet}), i.e., $(\rho^\star)^m$ belongs to $L^{2}(0,T; \mathcal{H}^1)$, as a consequence of 
Proposition \ref{prop2}  (see below). 
Moreover, in the case $\kappa_{j}\to \infty$, Proposition \ref{prop:3.5} (see below)  implies that $\rho^\star$ satisfies  item $(3)$ of  Definition \ref{Def. Dirichlet}, which allows replacing $\rho^\star_t(0)$ by $\alpha$ (resp. $\rho^\star_t(1)$ by $\beta$) for almost every $t\in(0,T]$.

	In order to finish the proof, we only need to show that $\rho^\star$ satisfies the integral equation, i.e.,  when $\kappa_j\to 0$ (resp. $\kappa_j\to \infty$), the limit  $\rho^\star$ satisfies  item  $(2)$ of Definition \ref{Def.Neumann}  (resp.  item  $(2)$ of Definition \ref{Def. Dirichlet}). 
	Note that after an integration by parts (see Lemma \ref{IIP}), we can rewrite the integral equation \eqref{eq:Robin integral} for $\kappa=\kappa_{j_\ell}$ as  
	\begin{equation}\label{integral1}
	\begin{split}
	\langle \rho_{t}^{\kappa_{j_\ell}}, \; H_{t}\rangle -\langle g, \; H_{0}\rangle &+ \int_{0}^{t}\< \partial_{u}\left(\rho^{\kappa_{j_\ell}}_{s}\right)^{m}, \;\partial_{u}H_s \>\, ds - \int_{0}^{t}\< \rho_{s}^{\kappa_{j_\ell}},\; \partial_{s}H_s \>\, ds \\
	&-  \int^{t}_{0} \kappa_{j_\ell}\big\{(\alpha -\rho_{s}^{\kappa_{j_\ell}}(0)) \,H_{s}(0) + (\beta -\rho_{s}^{\kappa_{j_\ell}}(1))\,H_{s}(1) \big\} \, ds=0,
	\end{split}   
	\end{equation}
	for all $t\in[0,T]$ and any test function $H\in C^{1,2}([0,T]\times[0,1])$. 
	We highlight that for $\kappa_{j_\ell} \to \infty$, as $\ell \to \infty$, we need to use test functions $H\in C_{0}^{1,2}([0,T]\times[0,1])$, because, in the limit,  we want to obtain  the integral equation \eqref{eq:Dirichlet integral}, which has test functions  in $C_{0}^{1,2}([0,T]\times[0,1])$. In this case, the boundary terms of   \eqref{integral1} 
	disappear and  \eqref{integral1}  becomes
	\begin{equation}\label{eqq3.2}
	\begin{split}
	\langle \rho_{t}^{\kappa_{j_\ell}},  H_{t}\rangle -\langle g,  H_{0}\rangle &+ \int_{0}^{t}\langle \partial_{u}\left(\rho^{\kappa_{j_\ell}}_{s}\right)^{m},\; \partial_{u}H_s \rangle\, ds - \int_{0}^{t}\langle \rho_{s}^{\kappa_{j_\ell}}, \;\partial_{s}H_s \rangle\, ds =0,
	\end{split}   
	\end{equation}
	for all $t\in[0,T]$ and all $H\in C_{0}^{1,2}([0,T]\times[0,1])$. Recall that in this case all the information at the boundary is given by $\rho^\star_t(0)=\alpha$ and $\rho^\star_t(1)=\beta$, for almost every $t\in(0,T]$.
	
	In the case when $\kappa_{j_\ell} \to 0$, as $\ell \to \infty$, the boundary terms in \eqref{integral1} vanish, when $\ell\to \infty$. In fact,
	since $ \alpha,\beta$, and $\rho^\kappa$ are bounded from above by one, the boundary term of \eqref{integral1} is bounded from above by
	$$\left|\int^{t}_{0} \kappa_{j_\ell}\big\{ (\alpha -\rho_{s}^{\kappa_{j_\ell}}(0))H_{s}(0) + (\beta -\rho_{s}^{\kappa_{j_\ell}}(1)) H_{s}(1) \big\} \, ds \right| \leq \kappa_{j_\ell} 2T\|H\|_{\infty},$$
	which vanishes as $\ell\to \infty$, {for all $t\in[0,T]$. Above
\begin{equation*}\label{linfinito}
\|H\|_{\infty}=\sup_{(t,u)\in [0,T]\times[0,1]}|H_t(u)|\,.
\end{equation*} }
	
We will analyze the remaining terms of \eqref{integral1} (or the terms of \eqref{eqq3.2}) separately.
	This is done at the end of this section:  Proposition \ref{prop5}  handles with the term with the weak derivative, and Proposition \ref{prop4}  deals with the remaining  terms. From those results, we can  see that  \eqref{integral1} converges, as $\ell\to \infty$, to 	
	\begin{equation*}
	\begin{split}
	\langle \rho^\star_{t},  H_{t}\rangle -\langle g,  H_{0}\rangle &+ \int_{0}^{t}\< \partial_{u}\left(\rho^{\star}_{s}\right)^{m}, \partial_{u}H_s \>\, ds - \int_{0}^{t}\< \rho_{s}^{\star}, \partial_{s}H_s \>\, ds=0,
	\end{split}   
	\end{equation*}
	for all $t\in[0,T]$ and all $H\in C^{1,2}([0,T]\times[0,1])$.  Therefore, performing an integration by parts (see Lemma \ref{IIP}) in last expression, we obtain
	\begin{equation}\label{integral2}
	\begin{split}
	\langle \rho^{\star}_{t},  H_{t}\rangle -\langle g,  H_{0}\rangle -& \int_0^t\langle \rho_{s}^\star,( \partial_s H_{s}+(\rho_s^\star)^{m-1} \Delta H_s ) \rangle \, ds\\
	+&\int^{t}_{0}  \big\{ (\rho_{s}^\star(1))^m \partial_u H_{s}(1)-(\rho_{s}^\star(0))^m  \partial_u H_{s}(0) \big\} \, ds = 0,		\end{split}   
	\end{equation}
	for all $t\in[0,T]$ and all $H\in C^{1,2}([0,T]\times[0,1])$, which proves the result in the Neumann case. To finish the proof in the Dirichlet case, note that  since Propositions \ref{prop5} and \ref{prop4}  do not impose any restriction on $\{\kappa_j\}_{j\in\mathbb N}$, we can do exactly the same argument to treat  \eqref{eqq3.2} just by taking   $H\in C^{1,2}_0([0,T]\times[0,1])$. From this, we get an integral equation similar to \eqref{integral2} with test functions  $H\in C^{1,2}_0([0,T]\times[0,1])$, and from Proposition \ref{prop:3.5}, it becomes the integral equation of the Dirichlet case, see \eqref{eq:Dirichlet integral}. Since any limit point $\rho^\star$ of  the set $\{\rho^\kappa:\kappa>0\}$  is a weak solution of the PME either with Neumann boundary conditions (when we take the limit $\kappa\to 0$) or Dirichlet boundary conditions  (when we take the limit $\kappa\to \infty$) and  these weak solutions are unique, the convergence follows.  
\end{proof}

From now on, until the end of this section, we present the auxiliary results used above on the proof of Theorem \ref{main theorem}.

	\begin{proposition}\label{prop2}
Fix $m\in\mathbb N$. If $\rho^{\star}$ is a limit point in $L^{2}([0,T]\times[0,1])$ of  $\{\rho^{\kappa}: \, \kappa >0 \}$, then $(\rho^{\star})^{m}\in L^{2}(0,T; \mathcal{H}^1)$.
	\end{proposition}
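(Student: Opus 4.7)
The plan is to combine the uniform energy bound from Theorem \ref{energy estimate consequence} with a routine weak-compactness argument, and to identify the weak limit by using that $\rho^{\kappa_{j_\ell}}\to\rho^\star$ strongly in $L^2$. The point is that the energy estimate already tells us that $\{(\rho^\kappa)^m:\kappa>0\}$ is bounded in the reflexive Hilbert space $L^2(0,T;\mathcal H^1)$, so the only real work is to match a weak limit there with the strong limit of $\rho^{\kappa_{j_\ell}}$.

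Concretely, I would proceed as follows. Since $L^2(0,T;\mathcal H^1)$ is reflexive and, by Theorem \ref{energy estimate consequence}, the family $\{(\rho^\kappa)^m:\kappa>0\}$ is bounded there, the Banach--Alaoglu theorem yields a subsequence $\{\kappa_{j_{\ell_k}}\}\subset\{\kappa_{j_\ell}\}$ and some $\Phi\in L^2(0,T;\mathcal H^1)$ such that
\begin{equation*}
(\rho^{\kappa_{j_{\ell_k}}})^m \rightharpoonup \Phi \quad \text{weakly in } L^2(0,T;\mathcal H^1) \text{ as } k\to\infty.
\end{equation*}
Because the inclusion $L^2(0,T;\mathcal H^1)\hookrightarrow L^2([0,T]\times[0,1])$ is continuous, this weak convergence implies weak convergence of $(\rho^{\kappa_{j_{\ell_k}}})^m$ to $\Phi$ in $L^2([0,T]\times[0,1])$ as well.

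It remains to identify $\Phi$ with $(\rho^\star)^m$. Since $\rho^{\kappa_{j_\ell}}\to\rho^\star$ strongly in $L^2([0,T]\times[0,1])$, a further subsequence (which I relabel if necessary) converges almost everywhere to $\rho^\star$. Since $x\mapsto x^m$ is continuous and $\rho^\kappa$ takes values in $[0,1]$, we have $(\rho^{\kappa_{j_{\ell_k}}})^m\to (\rho^\star)^m$ almost everywhere and $|(\rho^{\kappa_{j_{\ell_k}}})^m|\leq 1$. By the dominated convergence theorem,
\begin{equation*}
(\rho^{\kappa_{j_{\ell_k}}})^m \longrightarrow (\rho^\star)^m \quad \text{strongly, and hence weakly, in } L^2([0,T]\times[0,1]).
\end{equation*}
Uniqueness of the weak limit in $L^2([0,T]\times[0,1])$ forces $\Phi=(\rho^\star)^m$ almost everywhere, so $(\rho^\star)^m\in L^2(0,T;\mathcal H^1)$, as required.

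The only real obstacle is the identification step, and it is handled cleanly by extracting an a.e.\ convergent sub-subsequence and applying dominated convergence thanks to the uniform $L^\infty$ bound $\rho^\kappa\in[0,1]$; everything else is a direct consequence of the energy estimate in Theorem \ref{energy estimate consequence}. Note also that this argument is insensitive to whether $\kappa_j\to 0$ or $\kappa_j\to\infty$, since the bound $M_0$ in \eqref{energy estimate good} is uniform in $\kappa$.
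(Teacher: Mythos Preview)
Your argument is correct and rests on the same ingredient as the paper's proof, namely the uniform bound on $\{(\rho^\kappa)^m\}$ in $L^2(0,T;\mathcal H^1)$ from Theorem~\ref{energy estimate consequence}. The packaging differs slightly: the paper extracts a weakly convergent subsequence of the derivatives $\partial_u(\rho^{\kappa_j})^m$ in $L^2([0,T]\times[0,1])$, identifies its limit as $\partial_u(\rho^\star)^m$ by testing against $H\in C_c^{0,\infty}([0,T]\times(0,1))$, and then invokes the characterization Lemma~\ref{charact}; you instead pass to a weak limit directly in the reflexive space $L^2(0,T;\mathcal H^1)$ and identify it with $(\rho^\star)^m$ by uniqueness of weak limits in $L^2([0,T]\times[0,1])$. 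Your route is a touch more streamlined since it bypasses the characterization lemma. For the step $(\rho^{\kappa_j})^m\to(\rho^\star)^m$ in $L^2$, the paper uses the factorization \eqref{newton_binom} to get the Lipschitz bound $\|(\rho^{\kappa_j})^m-(\rho^\star)^m\|_{L^2}\le m\|\rho^{\kappa_j}-\rho^\star\|_{L^2}$ directly, whereas you go through an a.e.\ convergent sub-subsequence and dominated convergence; both are fine, though the paper's estimate avoids one more extraction.
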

	\begin{proof}
		By the hypothesis, there exists  $\{\kappa_{j}\}_{j\in\mathbb{N}}$ such that 
		$\rho^{\kappa_{j}}$ converges to $\rho^{\star}$ in $L^{2}([0,T]\times[0,1])$, when $j\to\infty$.
				From \eqref{newton_binom},  Cauchy-Schwarz's inequality  and since $0\leq \rho^{\kappa_{j}}, \rho^{\star}\leq 1$, we have 
		\begin{equation*}
		\|\left(\rho^{\kappa_j}\right)^{m} - \left(\rho^{\star}\right)^{m}\|_{L^{2}([0,T]\times[0,1])} \leq m\|\rho^{\kappa_j} - \rho^{\star} \|_{L^{2}([0,T]\times[0,1])},\end{equation*}
		which implies that $\left(\rho^{\kappa_j}\right)^{m} \to \left(\rho^{\star}\right)^{m}$ in $L^{2}([0,T]\times[0,1])$, when $j \to \infty$.

		To show that $(\rho^{\star})^{m}\in L^{2}(0,T; \mathcal{H}^1)$, we will use Lemma \ref{charact} and we will only need to consider test functions $H\in C_c^{0,\infty}([0,T]\times (0,1))$. Observe that from Theorem \ref{energy estimate consequence} and Proposition \ref{prop:2.6}, considering $H\in C_c^{0,\infty}([0,T]\times (0,1))$, we have  
		\begin{equation*}
		\dl (\rho^{\kappa_j})^m, \partial_{u}H \dr =-  \dl \partial_u(\rho^{\kappa_j})^m,H \dr ,
	\end{equation*} 
for all  $j\in\mathbb N$. Thus, from Cauchy-Schwarz's inequality,
		\begin{equation}\label{eq.3.31}
	\lim_{j\to\infty} \dl \partial_u(\rho^{\kappa_j})^m,H \dr=-\lim_{j\to\infty}	\dl (\rho^{\kappa_j})^m, \partial_{u}H \dr =- \dl (\rho^{\star})^m, \partial_{u}H \dr ,
	\end{equation}
	for all
$H\in C_c^{0,\infty}([0,T]\times (0,1))$.
	 
			On the other hand, 	from Theorem \ref{energy estimate consequence},  the set $\left\{ (\rho^\kappa)^{m}:\, \kappa > 0 \right\}$ is bounded in $L^{2}(0,T; \mathcal{H}^1)$, therefore, 
		there exists  $\{\kappa_{j_\ell}\}_{\ell\in\mathbb{N}}\subset\{\kappa_{j}\}_{j\in\mathbb{N}}$ and  $\Psi\in L^{2}([0,T]\times[0,1])$  such that  $\p_u(\rho^{\kappa_{j_\ell}})^m$ converges to $\Psi$ in $L^{2}([0,T]\times[0,1])$, when $\ell\to \infty$. 
		Thus,
		\begin{equation*}
\lim_{\ell\to\infty} \dl \partial_u(\rho^{\kappa_{j_\ell}})^m,H \dr= \dl \Psi,H \dr ,
\end{equation*}
			  	for all
			  	$H\in C_c^{0,\infty}([0,T]\times (0,1))$. Denoting $\Psi$ by $\partial_u(\rho^{\star})^m $, we have
			  	\begin{equation}\label{eq.3.3}
			  \lim_{j\to\infty} \dl \partial_u(\rho^{\kappa_j})^m,H \dr= \dl \partial_u(\rho^{\star})^m,\,H \dr ,
			  \end{equation}
			  for all
			  	$H\in C_c^{0,\infty}([0,T]\times (0,1))$.
			  Therefore, by \eqref{eq.3.31}  and \eqref{eq.3.3}, we have
			$$ \dl (\rho^{\star})^m, \partial_{u}H \dr =-  \dl \partial_u(\rho^{\star})^m,\,H \dr ,$$ for all
				$H\in C_c^{0,\infty}([0,T]\times (0,1))$. 
			Finally, from Lemma \ref{charact} we conclude that $(\rho^\star)^m\in L^2(0,T;\mathcal H^1)$.
	\end{proof}

	\begin{proposition}\label{prop5}
		Suppose that $\rho^{\kappa_{j}}$ converges to $\rho^{\star}$ in $L^{2}([0,T]\times[0,1])$, as $j \to \infty$. Then, for all $t\in[0,T]$ and for all $H \in C^{1,2}([0,T]\times[0,1])$
		$$\lim_{j\to \infty} \int_{0}^{t}\< \partial_{u} \left(\rho_{s}^{\kappa_{j}}\right)^{m},\; \partial_{u} H_s \> \, ds = \int_{0}^{t} \< \partial_{u}\left(\rho^{\star}_{s}\right)^{m},\; \partial_{u} H_s \>\, ds\,.$$
	\end{proposition}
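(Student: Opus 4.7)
The plan is to establish the weak $L^2([0,T]\times[0,1])$ convergence of $\partial_u(\rho^{\kappa_j})^m$ to $\partial_u(\rho^\star)^m$ along the full sequence, and then conclude by testing against the function $\Phi(s,u) := \mathbf{1}_{[0,t]}(s)\,\partial_u H_s(u)$, which lies in $L^2([0,T]\times[0,1])$ because $H \in C^{1,2}([0,T]\times[0,1])$ forces $\partial_u H$ to be bounded on the compact set $[0,T]\times[0,1]$. This reduces the proposition to a weak convergence statement for the spatial derivatives.

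To obtain the weak convergence, first observe that Theorem \ref{energy estimate consequence} guarantees that $\{(\rho^\kappa)^m : \kappa > 0\}$ is bounded in $L^2(0,T;\mathcal{H}^1)$, hence $\{\partial_u(\rho^{\kappa_j})^m\}_{j \in \mathbb{N}}$ is bounded in $L^2([0,T]\times[0,1])$. Therefore, any subsequence admits a further weakly convergent subsequence, say $\partial_u(\rho^{\kappa_{j_\ell}})^m \rightharpoonup \Psi$ in $L^2([0,T]\times[0,1])$. Combining the strong convergence $(\rho^{\kappa_j})^m \to (\rho^\star)^m$ in $L^2$, which follows from $\rho^{\kappa_j} \to \rho^\star$ in $L^2$ together with the pointwise bound $|(\rho^{\kappa_j})^m - (\rho^\star)^m| \leq m\,|\rho^{\kappa_j} - \rho^\star|$ as in the proof of Proposition \ref{prop2}, with integration by parts against test functions $H \in C_c^{0,\infty}([0,T]\times(0,1))$, one obtains
\[
\dl \Psi, H\dr \;=\; -\dl (\rho^\star)^m, \partial_u H\dr ,
\]
which identifies $\Psi$ as the weak derivative $\partial_u(\rho^\star)^m$ via Lemma \ref{charact}. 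Since every weak subsequential limit coincides with $\partial_u(\rho^\star)^m$, the standard subsequence principle implies that the entire sequence $\{\partial_u(\rho^{\kappa_j})^m\}_j$ converges weakly in $L^2([0,T]\times[0,1])$ to $\partial_u(\rho^\star)^m$.

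Testing this weak convergence against $\Phi$ immediately yields
\[
\int_0^t \< \partial_u(\rho^{\kappa_j}_s)^m, \partial_u H_s\>\, ds \;=\; \dl \partial_u(\rho^{\kappa_j})^m, \Phi\dr \;\longrightarrow\; \dl \partial_u(\rho^\star)^m, \Phi\dr \;=\; \int_0^t \< \partial_u(\rho^\star_s)^m, \partial_u H_s\>\, ds,
\]
which is exactly the claim. The only delicate point is the identification of every weak subsequential limit with $\partial_u(\rho^\star)^m$, but this is essentially contained in the proof of Proposition \ref{prop2}; the remaining work is just to notice that uniqueness of this limit promotes the subsequential weak convergence to convergence along the full sequence, and that weak $L^2$ convergence is preserved when one tests against the $L^2$ function $\Phi$ rather than only against compactly supported smooth functions.
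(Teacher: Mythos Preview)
Your proof is correct and takes a cleaner route than the paper's. The paper establishes \eqref{eq.3.3} only for compactly supported test functions and then, in the proof of Proposition~\ref{prop5}, extends this by hand: it multiplies by a temporal cutoff $\varphi_\delta$ to localize to $[0,t]$, takes $\delta\to 0$, and finally appeals to a spatial density argument (Proposition~5.5 of \cite{phase}) to pass from $G\in C_c^{1,\infty}([0,T]\times(0,1))$ to $G\in C^{1,1}([0,T]\times[0,1])$. Your approach bypasses both steps by recognizing that the uniform $L^2$ bound on $\partial_u(\rho^{\kappa_j})^m$ plus the identification of every weak subsequential limit forces weak $L^2$ convergence of the full sequence, so that one may test directly against any $L^2$ function---in particular $\Phi(s,u)=\mathbf{1}_{[0,t]}(s)\,\partial_u H_s(u)$. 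The paper's argument is more explicit and avoids invoking weak compactness and the subsequence principle as black boxes; yours is shorter and isolates the exact functional-analytic mechanism at work.
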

	\begin{proof}
		
			Note that, it is enough to prove 
			$$\lim_{j\to \infty} \int_{0}^{t}\< \partial_{u} \left(\rho_{s}^{\kappa_{j}}\right)^{m}, \; G_s \> \, ds = \int_{0}^{t} \< \partial_{u}\left(\rho^{\star}_{s}\right)^{m}, \;G_s \>\, ds\,,$$
			for all $G\in C^{1,1}([0,T]\times[0,1])$ and all $t\in [0,T]$.

	By \eqref{eq.3.3}, we have
			\begin{equation}\label{eq.3.4}\lim_{j\to \infty} \int_{0}^{T}\< \partial_{u} \left(\rho_{s}^{\kappa_{j}}\right)^{m},  \;\bar G_s \> \, ds = \int_{0}^{T} \< \partial_{u}\left(\rho^{\star}_{s}\right)^{m}, \;\bar G_s \>\, ds\,,	\end{equation}
			for all $\bar G\in C_c^{0,\infty}([0,T]\times (0,1))$.
			
			Consider a continuous function $\varphi_\delta:[0,T]\to [0,1]$ which is equal to 1 in the interval  $[0,t]$ and zero in $[t+\delta,T]$. For all $G\in C_c^{1,\infty}([0,T]\times (0,1))$, the function $\bar G_s(u)= \varphi_\delta(s)\,G_s(u)$ belongs to $C_c^{0,\infty}([0,T]\times (0,1))$. By \eqref{eq.3.4}, we have
	\begin{equation*}
	\begin{split}
	&\lim_{j\to \infty} \int_{0}^{t} \< \partial_{u} \left(\rho_{s}^{\kappa_{j}}\right)^{m},  G_s \> \, \,ds+\lim_{j\to \infty} \int_{t}^{t+\delta} \< \partial_{u} \left(\rho_{s}^{\kappa_{j}}\right)^{m},  G_s \> \, \varphi_\delta(s)\, ds  \\&=\lim_{j\to \infty} \int_{0}^{T} \< \partial_{u} \left(\rho_{s}^{\kappa_{j}}\right)^{m},  G_s \> \, \varphi_\delta(s)\,ds\, = \int_{0}^{T}  \< \partial_{u}\left(\rho^{\star}_{s}\right)^{m}, G_s \>\, \varphi_\delta(s)\,ds\\
&=	\int_{0}^{t}  \< \partial_{u}\left(\rho^{\star}_{s}\right)^{m}, G_s \>\,ds+
	\int_{t}^{t+\delta}  \< \partial_{u}\left(\rho^{\star}_{s}\right)^{m}, G_s \>\, \varphi_\delta(s)\,ds\,,
	\end{split}
	\end{equation*}
	for all $G\in C_c^{1,\infty}([0,T]\times (0,1))$ and all $\delta>0$. Using Cauchy-Schwarz's inequality  and the boundedness of $\rho^\kappa$, the integrals above on the interval $[t,t+\delta]$ vanish when $\delta\to 0$. Thus,
		\begin{equation*}
	\begin{split}
	&\lim_{j\to \infty} \int_{0}^{t} \< \partial_{u} \left(\rho_{s}^{\kappa_{j}}\right)^{m},  G_s \> 
	=	\int_{0}^{t}  \< \partial_{u}\left(\rho^{\star}_{s}\right)^{m}, G_s \>\,ds,
	\end{split}
	\end{equation*}
	for all $G\in C_c^{1,\infty}([0,T]\times (0,1))$ and all $t\in[0,T]$.
By a density argument, we can extend the result above to functions $G\in C^{1,1}([0,T]\times[0,1])$. For more details, we refer the reader to Proposition $5.5$ of \cite{phase}.
	\end{proof}

	\begin{proposition}\label{prop:3.5}
		Let 	$\rho^\star=\lim_{j\to\infty}\rho^{\kappa_j}$ in $L^{2}([0,T]\times[0,1])$, where 
		$\{\rho^{\kappa_j}:\, j\in\mathbb N\}$ is   a subsequence of $\{\rho^{\kappa}: \, \kappa >0\}$ and $\kappa_j\to \infty$, when $j\to\infty$. Then, $\rho^\star$  satisfies  item $(3)$  of Definition \ref{Def. Dirichlet}, i.e.,  $\rho_{t}^\star(0)=\alpha$ and $\rho_{t}^\star(1)=\beta$, for almost every $t\in(0,T]$.
	\end{proposition}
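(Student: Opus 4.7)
The plan is to leverage the energy estimate \eqref{energy estimate good} together with the explicit form of the Robin boundary conditions \eqref{eq:Robin_bc} to deduce strong $L^2([0,T])$ convergence of the boundary traces $\rho^{\kappa_j}_s(0)\to \alpha$ and $\rho^{\kappa_j}_s(1)\to\beta$, and then to identify these strong limits with the boundary traces of $\rho^\star$ through a weak-compactness argument in $L^2(0,T;\mathcal{H}^1)$.

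For the first step, I would apply \eqref{energy exp!!!!} of Proposition \ref{prop:2.6} with $\xi=\rho^{\kappa_j}$, which together with Theorem \ref{energy estimate consequence} yields
$$\int_{0}^{T} \frac{P_m^\alpha(\rho^{\kappa_j}_s(0))}{\kappa_j}\bigl(\partial_u(\rho^{\kappa_j}_s)^m(0)\bigr)^2\,ds \;\leq\; 4cM_0.$$
Substituting the Robin identity $\partial_u(\rho^{\kappa_j}_s)^m(0)=\kappa_j(\rho^{\kappa_j}_s(0)-\alpha)$ from \eqref{eq:Robin_bc} and using the lower bound $P_m^\alpha\geq \alpha^{m-1}>0$ from \eqref{ppositive} gives
$$\int_{0}^{T} (\rho^{\kappa_j}_s(0)-\alpha)^{2}\,ds \;\leq\; \frac{4cM_0}{\alpha^{m-1}\kappa_j}\xrightarrow[j\to\infty]{}0.$$
Thus $\rho^{\kappa_j}_\cdot(0)\to \alpha$ in $L^2([0,T])$, and since $x\mapsto x^m$ is Lipschitz on $[0,1]$ it follows that $(\rho^{\kappa_j})^m(\cdot,0)\to \alpha^m$ in $L^2([0,T])$. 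The analogous computation at $u=1$ gives $\rho^{\kappa_j}_\cdot(1)\to\beta$ in $L^2([0,T])$.

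For the second step, Theorem \ref{energy estimate consequence} provides a uniform $L^2(0,T;\mathcal{H}^1)$-bound on $\{(\rho^{\kappa_j})^m\}$, and the argument already used in the proof of Proposition \ref{prop2} (bounded sequence in $L^2$, extraction of a weakly convergent subsequence, identification of the limit through integration by parts against compactly supported test functions) gives, along a further subsequence, $\partial_u(\rho^{\kappa_j})^m\rightharpoonup \partial_u(\rho^\star)^m$ weakly in $L^2([0,T]\times[0,1])$. Combined with the strong convergence $(\rho^{\kappa_j})^m\to(\rho^\star)^m$ in $L^2([0,T]\times[0,1])$, this yields weak convergence in $L^2(0,T;\mathcal{H}^1)$. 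Since the evaluation map $f\mapsto f(\cdot,0)$ is bounded linear from $L^2(0,T;\mathcal{H}^1)$ into $L^2([0,T])$ (via the one-dimensional embedding $\mathcal{H}^1\hookrightarrow C([0,1])$ applied pointwise in time), it is weakly continuous, so $(\rho^{\kappa_j})^m(\cdot,0)\rightharpoonup (\rho^\star)^m(\cdot,0)$ weakly in $L^2([0,T])$. Uniqueness of the weak limit against the strong limit $\alpha^m$ forces $(\rho^\star)^m(t,0)=\alpha^m$, and hence $\rho^\star_t(0)=\alpha$, for almost every $t\in(0,T]$. The argument at $u=1$ is identical.

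The only delicate point in this plan is the trace-continuity step: one should verify that the continuous embedding $\mathcal{H}^1\hookrightarrow C([0,1])$ lifts to a bounded linear map $L^2(0,T;\mathcal{H}^1)\to L^2([0,T])$ at the spatial boundary, so that weak convergence survives the trace. This is routine for a single interior problem on a bounded interval, but must be invoked explicitly here, because $\rho^\star$ itself is only a priori known to lie in $L^2$, and the trace of $\rho^\star$ at $u=0$ is only meaningful after extracting it from $(\rho^\star)^m\in L^2(0,T;\mathcal{H}^1)$.
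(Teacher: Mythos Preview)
Your proof is correct. The first step---extracting the $L^2([0,T])$ convergence of the boundary traces $\rho^{\kappa_j}_\cdot(0)\to\alpha$ from the weighted energy bound combined with the Robin identity \eqref{eq:Robin_bc}---is essentially the same argument the paper gives (the paper phrases it in terms of $(\rho^{\kappa_j})^m(0)-\alpha^m$ via \eqref{eq:Robin_bc1}, but the mechanism is identical).

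The second step, however, is genuinely different. The paper identifies the limit trace with $(\rho^\star)^m(0)$ by an explicit $\varepsilon$-averaging argument: it inserts the spatial means $\frac{1}{\varepsilon}\int_0^\varepsilon (\rho^{\kappa_j}_s)^m\,du$ and $\frac{1}{\varepsilon}\int_0^\varepsilon (\rho^\star_s)^m\,du$, controls the trace-to-mean errors by $\sqrt{\varepsilon}\,\|\partial_u(\cdot)^m\|_2$ (uniformly in $\kappa$ thanks to the energy bound), and controls the difference of means by the $L^2$ distance $\|\rho^{\kappa_j}-\rho^\star\|$; sending first $j\to\infty$ and then $\varepsilon\to 0$ gives \eqref{35.2}. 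Your route instead packages this into the single functional-analytic observation that the trace map $L^2(0,T;\mathcal{H}^1)\to L^2([0,T])$ is bounded linear, hence weakly continuous, and then matches the weak limit against the already-established strong limit $\alpha^m$. Your argument is more concise and conceptually cleaner; the paper's argument is more self-contained (it never names the trace map or invokes weak continuity explicitly) and makes the quantitative dependence on $\varepsilon$ and $\kappa$ visible. Both are valid, and your caveat about checking that the embedding $\mathcal{H}^1\hookrightarrow C([0,1])$ lifts to a bounded map on the Bochner space is exactly the point that needs to be stated---it is indeed routine, via $|f(0)|\le C\|f\|_{\mathcal{H}^1}$ integrated in time.
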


	\begin{proof}
		We only present the proof of $\rho_{s}^\star(0)=\alpha$,  for almost every  $s\in(0,T]$, since the other boundary condition can be proved   similarly.
		In order to prove this, it is enough to show that 
		\begin{equation}\label{35.222}
		\int_0^T \Big((\rho^{\star}_s)^m(0)-\alpha^m\Big)^2\;ds=0\,.\end{equation}
		The idea behind this proof is to compare separately $(\rho^{\star}_s)^m(0)$ and $\alpha^m$ with $(\rho^{\kappa_j}_s)^m(0)$. In the next lines, for ease of notation, we fix a parameter $\kappa$, and at the end of the argument, we consider $\kappa_j$ and we will take the limit in $j\to\infty$.

We  start by studying  $\int_0^T \Big((\rho^\kappa_s)^m(0)-(\rho^\star_s)^m(0)\Big)^2\;ds$. Summing and subtracting $\frac{1}{\varepsilon}\int_0^\varepsilon(\rho^\kappa_s)^m(u)\,du$ and $\frac{1}{\varepsilon}\int_0^\varepsilon(\rho^\star_s)^m(u)\,du$, and using the inequality $(a+b+c)^2\leq 3(a^2+b^2+c^2) $, we get
		\begin{equation*}
		\begin{split}
		\int_0^T \Big((\rho^\kappa_s)^m(0)-(\rho^\star_s)^m(0)\Big)^2\;ds\;&\leq\; 3\int_0^T\Bigg((\rho^\kappa_s)^m(0)-\frac{1}{\varepsilon}\int_0^\varepsilon(\rho^\kappa_s)^m(u)\,du\Bigg)^2\,ds\\
		&+ 3\int_0^T\Bigg(\frac{1}{\varepsilon}\int_0^\varepsilon(\rho^\kappa_s)^m(u)\,du-\frac{1}{\varepsilon}\int_0^\varepsilon(\rho^\star_s)^m(u)\,du\Bigg)^2\,ds\\
		&+ 3\int_0^T\Bigg((\rho^\star_s)^m(0)-\frac{1}{\varepsilon}\int_0^\varepsilon(\rho^\star_s)^m(u)\,du\Bigg)^2\,ds\,,
		\end{split}
		\end{equation*}
		for all $\kappa>0$ and $\varepsilon>0$. Note that, from Definition \ref{L2 space weight}, \eqref{energy exp!!!!}, and Theorem \ref{energy estimate consequence} (for more details see \eqref{6.1}),  we have 
		\begin{equation}\label{35.3}
		\dl \p_u(\rho^\kappa)^m,\,\p_u(\rho^\kappa)^m \dr
		\leq 	 4c\,M_0,
		\end{equation} for all $\kappa>0$. 
		Since 
\begin{equation*}
		\begin{split} 
		\left|\,(\rho^\kappa_s)^m(0)-\frac{1}{\varepsilon}\int_0^\varepsilon(\rho^\kappa_s)^m(u)\,du\,\right| &=\left|\,\frac{1}{\varepsilon}\int_0^\varepsilon\int_0^u\partial_{v}(\rho_{s}^{k})^{m}(v)\,dv\,du\,\right| \\
		&= {\left| \frac{1}{\varepsilon}\int_{0}^{\varepsilon}\partial_{v}(\rho_{s}^{k})^{m}(v){(\varepsilon-v)}\,dv \right|} \\ 
		&\leq \sqrt{\frac{\varepsilon}{3}}\;\big\Vert\p_u(\rho^\kappa_s)^m\big\Vert_2\,,
		\end{split}
		\end{equation*}
		for all $\kappa>0$ and $\varepsilon>0$, we obtain
		\begin{equation}\label{35.4}
		\int_0^T\Bigg((\rho^\kappa_s)^m(0)-\frac{1}{\varepsilon}\int_0^\varepsilon(\rho^\kappa_s)^m(u)\,du\Bigg)^2\,ds\leq \frac{\varepsilon}{3}\, \dl \p_u(\rho^\kappa)^m,\,\p_u(\rho^\kappa)^m \dr\leq \frac{4cM_0}{3}\,\varepsilon\,,
		\end{equation}
		for all $\kappa>0$ and $\varepsilon>0$.
		
		From  Theorem \ref{energy estimate consequence},  the set $\left\{ (\rho^\kappa)^{m}:\, \kappa > 0 \right\}$ is bounded in $L^{2}(0,T; \mathcal{H}^1)$. Therefore, 
		there exist  $\{\kappa_{j_\ell}\}_{\ell\in\mathbb{N}}\subset\{\kappa_{j}\}_{j\in\mathbb{N}}$ and  $\Psi\in L^{2}([0,T]\times[0,1])$  such that  $\p_u(\rho^{\kappa_{j_\ell}})^m$ converges to $\Psi$ in $L^{2}([0,T]\times[0,1])$, when $\ell\to \infty$. As in the proof of Proposition \ref{prop2}, we denote $\Psi$ by $\p_u(\rho^{\star})^m$. Thus, taking the limit in \eqref{35.3} we obtain 
		\begin{equation*}
		\dl \p_u(\rho^\star)^m,\,\p_u(\rho^\star)^m \dr
		\leq 	 4c\,M_0\,.
		\end{equation*}By the same arguments as in \eqref{35.4}, we also get
		\begin{equation}\label{35.5}
		\int_0^T\Bigg((\rho^\star_s)^m(0)-\frac{1}{\varepsilon}\int_0^\varepsilon(\rho^\star_s)^m(u)\,du\Bigg)^2\,ds\leq \frac{\varepsilon}{3}\, \dl \p_u(\rho^\star)^m,\,\p_u(\rho^\star)^m \dr\leq \frac{4cM_0}{3}\,\varepsilon\,,
		\end{equation}
		for all $\varepsilon>0$.

		From \eqref{newton_binom}, the  fact that $\rho^\kappa$ and $\rho^\star$ are both bounded from above by one together with applying the Cauchy-Schwarz's inequality twice, first in the time integral and then in the space integral, we get
		\begin{equation}\label{35.6}
		\int_0^T\Bigg(\frac{1}{\varepsilon}\int_0^\varepsilon(\rho^\kappa_s)^m(u)\,du-\frac{1}{\varepsilon}\int_0^\varepsilon(\rho^\star_s)^m(u)\,du\Bigg)^2\,ds\leq 	m^2\sqrt{\frac{T}{\varepsilon^{3}}}\;\dl \,\rho^\kappa-\rho^\star,\;\rho^\kappa-\rho^\star\,\dr^{\sfrac{1}{2}}\,,
		\end{equation}
		for all $\kappa>0$ and $\varepsilon>0$.
		Thus,  \eqref{35.4}, \eqref{35.5} and \eqref{35.6} imply 
		\begin{equation*}
		\int_0^T \Big((\rho^\kappa_s)^m(0)-(\rho^\star_s)^m(0)\Big)^2\;ds\leq\,\,8cM_0\,\,\varepsilon\,+3{m^2}\sqrt{\frac{T}{\varepsilon^{3}}}\;\dl \,\rho^\kappa-\rho^\star,\;\rho^\kappa-\rho^\star\,\dr^{\sfrac{1}{2}}\,,\end{equation*}
		for all $\kappa>0$ and $\varepsilon>0$. By the hypothesis and taking the limit first on $\kappa_j\to \infty$ and after in $\varepsilon\to 0$, we obtain
		\begin{equation}\label{35.2}
		\lim_{\kappa_j\to \infty} \int_0^T \Big((\rho^{\kappa_j}_s)^m(0)-(\rho^\star_s)^m(0)\Big)^2\;ds=0\,.\end{equation}
		
Observe that in the limit above  it is not necessary that $\kappa_j\to \infty$.

		Now, we observe that,
		from \eqref{35.2}, in order to show \eqref{35.222}, we just need to study the integral
		$$ \int_0^T \Big((\rho^\kappa_s) ^m(0)-\alpha^m\Big)^2\;ds.$$
		Using  the equality \eqref{eq:Robin_bc1} and the fact that $\rho^\kappa$ and $\alpha$ are both bounded from above by one, last integral is bounded from above by
		\begin{equation*}
		\frac{m}{\kappa}\int_0^T \frac{P_m^\alpha(\rho^\kappa_s(0))}{\kappa}\Big(\partial_u(\rho^\kappa_s)^m(0)\Big)^2\;ds\,,
		\end{equation*} for all $\kappa>0$.
		From Proposition \ref{prop:2.6}, Definition \ref{L2 space weight} and Theorem \ref{energy estimate consequence},  last display is bounded from above by $\frac{m}{\kappa}\,\,4cM_0$. 
		Since $\kappa_j\to \infty$, when $j\to \infty$, the inequalities above imply that
		\begin{equation*}
		\lim_{\kappa_j\to \infty} \int_0^T \Big((\rho^{\kappa_j}_s)^m(0)-\alpha^m\Big)^2\;ds=0\,.\end{equation*}
		Therefore, from the limit above and the limit \eqref{35.2}, we get \eqref{35.222}.

	\end{proof}

	\begin{proposition}\label{prop3}
		For $\kappa>0$, let  $\rho^{\kappa}:[0,T]\times[0,1]\rightarrow [0,1]$ be the unique weak solution of \eqref{eq:Robin}. Then, there exists a constant $C >0$ that does not depend on $\kappa$ such that
		\begin{equation*}
		\vert\,\< \rho^\kappa_t, \, H_t\,\>\,  -\,\< \rho^\kappa_s, \, H_s\,\>\,\vert \leq C |\,t-s\,|^{1/2}\,,\qquad\forall s,t\in [0,T]\,,
		\end{equation*}
where $H \in C^{1,2} ([0,T]\times[0,1])$, for $0<\kappa \leq 1$,  and  $H \in C^{1,2}_0 ([0,T]\times[0,1])$, for $\kappa>1$.  Above, the constant $C$ depends on $T,\;H,\;m,\;\alpha,\;\beta,\;c,\;\mbox{and} \;M_0$. 
	\end{proposition}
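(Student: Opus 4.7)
The plan is to insert the test function $H$ into the weak formulation at times $t$ and $s$, subtract, and estimate each of the three resulting time-integrated terms separately, with the energy bound of Theorem \ref{energy estimate consequence} as the main analytic input. Concretely, starting from the integrated form \eqref{integral1} (derived in the proof of Theorem \ref{main theorem} and valid for every $\kappa>0$ and every $H \in C^{1,2}([0,T]\times[0,1])$), subtracting the equation at time $s$ from the one at time $t$ yields
\begin{equation*}
\begin{split}
\<\rho^\kappa_t, H_t\> - \<\rho^\kappa_s, H_s\> &= -\int_s^t \<\p_u(\rho^\kappa_r)^m,\, \p_u H_r\>\, dr + \int_s^t \<\rho^\kappa_r,\, \p_r H_r\>\, dr \\
&\quad + \int_s^t \kappa\big\{(\alpha - \rho^\kappa_r(0))H_r(0) + (\beta - \rho^\kappa_r(1))H_r(1)\big\}\, dr,
\end{split}
\end{equation*}
and it then suffices to bound each of these three integrals by $C|t-s|^{1/2}$ uniformly in $\kappa$.

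The decisive term is the first one: applying Cauchy-Schwarz in space and then in time on $[s,t]$, I would bound it by
\begin{equation*}
\|\p_u H\|_\infty\,|t-s|^{1/2}\Big(\int_0^T \|\p_u(\rho^\kappa_r)^m\|_2^2\, dr\Big)^{1/2},
\end{equation*}
and the remaining $L^2$-integral is controlled uniformly in $\kappa$ by $4cM_0$ via the identity \eqref{energy exp!!!!} combined with the energy bound \eqref{energy estimate good}. This is what produces the $|t-s|^{1/2}$ scaling, and it explains why $1/2$ is the natural H\"older exponent here. The second integral is bounded trivially by $\|\p_r H\|_\infty |t-s| \leq T^{1/2}\|\p_r H\|_\infty |t-s|^{1/2}$ since $0 \leq \rho^\kappa_r \leq 1$.

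The one point that requires care is the boundary integral, and this is where the dichotomy on $\kappa$ enters the statement. For $0 < \kappa \leq 1$, since $\alpha, \beta \in (0,1)$ and $\rho^\kappa_r(u) \in [0,1]$, one has $|\kappa(\alpha - \rho^\kappa_r(0))| \leq 1$ and $|\kappa(\beta - \rho^\kappa_r(1))| \leq 1$, so the boundary integral is dominated by $2\|H\|_\infty |t-s| \leq 2T^{1/2}\|H\|_\infty |t-s|^{1/2}$. For $\kappa>1$ this crude estimate breaks down --- one would need pointwise control of the boundary values of $\p_u(\rho^\kappa_r)^m$ to exploit the Robin identity \eqref{eq:Robin_bc}, which is not available --- and this is precisely why I would restrict to $H \in C^{1,2}_0([0,T]\times[0,1])$ in that regime: the condition $H_r(0)=H_r(1)=0$ makes the entire boundary integral vanish. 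Collecting the three bounds produces the required constant $C = C(T, H, m, \alpha, \beta, c, M_0)$ independent of $\kappa$, and beyond cleanly packaging the energy estimate I do not expect any genuine obstacle.
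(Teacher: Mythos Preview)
Your proposal is correct and follows essentially the same approach as the paper: subtract the integrated weak formulation at times $t$ and $s$, estimate the gradient term via Cauchy--Schwarz together with the uniform energy bound $\dl\p_u(\rho^\kappa)^m,\p_u(\rho^\kappa)^m\dr\leq 4cM_0$, bound the $\p_r H$ term trivially using $0\leq\rho^\kappa\leq 1$, and handle the boundary integral by the dichotomy $\kappa\leq 1$ (crude bound) versus $\kappa>1$ (test functions vanishing at the boundary). The only differences are cosmetic constants (e.g.\ you use $|\alpha-\rho^\kappa_r(0)|\leq 1$ where the paper uses $\leq 2$, and $T^{1/2}$ versus $\sqrt{2T}$).
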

	
	\begin{proof}
		The proof will be divided into two cases, $0<\kappa \leq 1$ and $\kappa>1$.  We observe that this  splitting  of the value of $\kappa$ is purely a choice, any other constant would play the same role and would not affect the argument. We begin by analyzing the case $0<\kappa \leq 1$. Let $H\in C^{1,2}([0,T]\times[0,1])$. Performing an integration by parts (see Lemma \ref{IIP})  in \eqref{eq:Robin integral}, we have
		\begin{equation}\label{integral eq}
		\begin{split}
		\< \rho^{\kappa}_{t}, H_t \> - \< g, H_0 \> &+ \int_{0}^{t}\< \partial_{u} (\rho^{\kappa}_{s})^{m}, \partial_{u} H_s \>\, ds - \int_{0}^{t}\< \rho^{\kappa}_s, \partial_s H_s \>\, ds \\
		&- \int_{0}^{t}\big\{\kappa(\alpha - \rho_{s}^{\kappa}(0))H_{s}(0)+\kappa(\beta- \rho_{s}^{\kappa}(1))H_{s}(1)\big\}\, ds = 0.
		\end{split}
		\end{equation}
		Now, we consider  \eqref{integral eq}  with $t$ and then $s$ and subtract one from the other, to obtain:
	\begin{equation}\label{eq:important}
	\begin{split}
		\left|\< \rho^{\kappa}_{t}, H_t \> - 	\< \rho^{\kappa}_{s}, H_s \> \right|& \leq \Big|\int_s^t\<\, \partial_u(\rho^\kappa_r )^m, \,\partial_u  H_r\>\,dr\Big| +\Big|\int_s^t\< \rho^\kappa_r, \, \partial_r H_r\,\>\, dr\Big|\\&+\Big|\int_{s}^{t}\big\{\kappa(\alpha - \rho_{r}^{\kappa}(0))H_{r}(0)+\kappa(\beta- \rho_{r}^{\kappa}(1))H_{r}(1)\big\}\, dr\Big|.
		\end{split}
		\end{equation}
In the following lines, we will estimate  each one of the  terms on the right-hand side of last display. 
		Let us begin by estimating the first term. Note that 
		\begin{equation*}
\Big|\int_s^t\<\, \partial_u(\rho^\kappa_r )^m, \,\partial_u  H_r\>\,dr\Big| \leq \Vert \partial_u H\Vert_\infty \int_s^t\int_0^1|\partial_u(\rho^\kappa_r )^m|\,du\,dr.
		\end{equation*}
	Applying the Cauchy-Schwarz's inequality twice, first in the time integral and then in the space integral, we bound the last expression from above by
			\begin{equation*}
			\begin{split}
	\Vert \partial_u H\Vert_\infty \sqrt{|t-s|}\,{\<\!\<  \partial_u(\rho^\kappa )^m,\,  \partial_u(\rho^\kappa )^m \>\!\>^{1/2}.}
			\end{split}
			\end{equation*}
		Since $\rho^{\kappa}$ is the weak solution of \eqref{eq:Robin}, from Theorem \ref{energy estimate consequence}  and Proposition \ref{prop:2.6} , the last expression becomes bounded from above by 
		\begin{equation}\label{R1.1}
		 \Vert \partial_u H\Vert_\infty \sqrt{4cM_0}\sqrt{|t-s|}\,.
		\end{equation}
		
		To estimate the second term on the right-hand side of \eqref{eq:important}, we use the fact that $0\leq \rho^\kappa \leq 1$ to get
		\begin{equation}\label{R2.1}
		\begin{split}
	\Big|\int_s^t\< \rho^\kappa_r, \, \partial_r H_r\,\>\, dr\Big| & \leq \| \partial_r H \|_{\infty} |t-s|\leq \| \partial_r H \|_{\infty} \sqrt{2T}\sqrt{|t-s|}\,.
		\end{split}
		\end{equation}

	Here we used the fact that $0<\kappa\leq 1$. Since  $|\alpha - \rho_{r}^{\kappa}(0)| \leq 2$ and $|\beta - \rho_{r}^{\kappa}(1)| \leq 2$, we can bound the last term on the right-hand side of \eqref{eq:important} by
		\begin{equation}\label{R2.1AA}
	 4\|H\|_{\infty}\sqrt{2T}\sqrt{|t-s|}\,.
		\end{equation}

			We now turn to the proof for the case $\kappa >1$. Observe that we did not impose  any restriction on $\kappa$ nor on $H$  when we obtained the  estimates \eqref{R1.1} and \eqref{R2.1}. Nevertheless, we used the fact that $\kappa\leq 1$ on the estimate \eqref{R2.1AA}. Now, since $\kappa >1$, it is even easier because we use functions $H\in C^{1,2}_0 ([0,T]\times[0,1])$ so that they vanish at the boundary. Then, the last term on the right-hand side of \eqref{eq:important} is null, proving the proposition.
	\end{proof}

	\begin{proposition}\label{prop4}
	Let 	$\rho^\star=\lim_{j\to \infty}\rho^{\kappa_j}$ in $L^{2}([0,T]\times[0,1])$, where 
		$\{\rho^{\kappa_j}:\, j\in\mathbb N\}$ is   a subsequence of $\{\rho^{\kappa}: \, \kappa >0\}$. Then, 
		there exists a subsequence $\{\kappa_{j_{l}}\}_{l \in \mathbb{N}} \subseteq \{\kappa_{j}\}_{j\in \mathbb{N}}$ such that
		$$\lim_{l \to \infty} \< \rho_{t}^{\kappa_{j_{l}}}, H_{t} \> = \< \rho^\star_t, H_t \>, $$
	for all	$ t \in [0,T]$ and $ H\in C^{0,2}([0,T]\times[0,1])$.
	
	\end{proposition}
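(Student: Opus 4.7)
The plan is to combine the hypothesized $L^{2}$ convergence with the uniform-in-$\kappa$ $\tfrac{1}{2}$-H\"older time regularity of Proposition \ref{prop3} via an Arzel\`a--Ascoli plus diagonal-extraction argument. Specifically, $L^{2}$ convergence on $[0,T]\times[0,1]$ together with Fubini gives $s\mapsto\|\rho^{\kappa_{j}}_{s}-\rho^{\star}_{s}\|_{2}^{2}\to 0$ in $L^{1}([0,T])$, so — up to a first subsequence, still labelled $\{\kappa_{j}\}$ — we have $\|\rho^{\kappa_{j}}_{s}-\rho^{\star}_{s}\|_{2}\to 0$ for a.e.\ $s\in[0,T]$. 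By Cauchy--Schwarz this yields $\<\rho^{\kappa_{j}}_{s}, H_{s}\>\to\<\rho^{\star}_{s}, H_{s}\>$ for a.e.\ $s\in[0,T]$ and every fixed $H\in C^{0,2}$.

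Next, for fixed $H\in C^{1,2}([0,T]\times[0,1])$ (respectively $H\in C^{1,2}_{0}$ when handling the tail of a sequence with $\kappa_{j}>1$), Proposition \ref{prop3} furnishes a $j$-uniform estimate
$$\bigl|\<\rho^{\kappa_{j}}_{t},H_{t}\>-\<\rho^{\kappa_{j}}_{s},H_{s}\>\bigr|\leq C_{H}\,|t-s|^{1/2},\qquad s,t\in[0,T],$$
so that $\{t\mapsto\<\rho^{\kappa_{j}}_{t},H_{t}\>\}_{j}$ is equicontinuous and uniformly bounded on $[0,T]$. Arzel\`a--Ascoli extracts a further subsequence converging uniformly in $t$, and by the first step the uniform limit must agree, a.e.\ hence everywhere (since a uniform limit of continuous functions is continuous), with the continuous-in-$t$ representative of $t\mapsto\<\rho^{\star}_{t},H_{t}\>$. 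Applying this to a countable dense family $\{H^{(k)}\}_{k\in\mathbb N}$ in the admissible class and diagonalising delivers a single subsequence $\{\kappa_{j_{l}}\}$ along which $\<\rho^{\kappa_{j_{l}}}_{t}, H^{(k)}_{t}\>\to\<\rho^{\star}_{t}, H^{(k)}_{t}\>$ uniformly in $t$ for every $k$.

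To pass from the dense family to an arbitrary $H\in C^{0,2}$, I would use a time-mollification $H^{\varepsilon}\in C^{1,2}$ (after a trivial extension of $H$ off $[0,T]$) satisfying $\sup_{s}\|H_{s}-H^{\varepsilon}_{s}\|_{\infty}\to 0$ as $\varepsilon\to 0$. Since $\rho^{\kappa},\rho^{\star}\in[0,1]$,
$$\bigl|\<\rho^{\kappa_{j_{l}}}_{t},H_{t}\>-\<\rho^{\star}_{t},H_{t}\>\bigr|\leq 2\sup_{s}\|H_{s}-H^{\varepsilon}_{s}\|_{\infty}+\bigl|\<\rho^{\kappa_{j_{l}}}_{t},H^{\varepsilon}_{t}\>-\<\rho^{\star}_{t},H^{\varepsilon}_{t}\>\bigr|,$$
and letting $l\to\infty$ then $\varepsilon\to 0$ concludes. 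The main technical obstacle is the $\kappa$-dichotomy in Proposition \ref{prop3}, which forces $H$ to vanish at the spatial boundary when $\kappa>1$: for $\{\kappa_{j}\}$ eventually below $1$ (e.g.\ $\kappa_{j}\to 0$) the admissible class is $C^{1,2}$ and density in $C^{0,2}$ is routine, while for $\kappa_{j}\to\infty$ one is restricted to $C^{1,2}_{0}$, which is precisely the class that appears in the Dirichlet step of Theorem \ref{main theorem}; a mixed sequence is treated by splitting into these two regimes and performing the extraction on each sub-subsequence separately.
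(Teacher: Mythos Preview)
Your approach is essentially the same as the paper's: pass from $L^{2}$ convergence to a.e.-in-time convergence along a subsequence, use the uniform H\"older bound of Proposition~\ref{prop3} to get equicontinuity, apply Arzel\`a--Ascoli and a diagonal extraction over a countable dense family, and finally extend to all test functions by density. The one substantive difference is in the identification of the limit for \emph{every} $t$: where you invoke ``the continuous-in-$t$ representative of $t\mapsto\<\rho^{\star}_{t},H_{t}\>$'', the paper makes this step explicit by fixing $t$, viewing $g\mapsto\lim_{l}\<\rho^{\kappa_{j_{l}}}_{t},g\>$ as a bounded linear functional on $L^{2}([0,1])$, applying Riesz to produce $\tilde\rho_{t}\in L^{2}([0,1])$, and then checking that $\tilde\rho=\rho^{\star}$ in $L^{2}([0,T]\times[0,1])$. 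This Riesz step is what guarantees that a \emph{single} modification of $\rho^{\star}$ on a time-null set makes the identity $\lim_{l}\<\rho^{\kappa_{j_{l}}}_{t},H_{t}\>=\<\rho^{\star}_{t},H_{t}\>$ hold simultaneously for all $H$ and all $t$; your phrasing assumes this implicitly. Conversely, your explicit discussion of the $C^{1,2}$ versus $C^{1,2}_{0}$ dichotomy in Proposition~\ref{prop3}, and the time-mollification needed to bridge $C^{1,2}$ and $C^{0,2}$, address regularity issues that the paper's proof leaves tacit.
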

	\begin{proof} By the hypothesis and Cauchy-Schwarz's inequality,
		\begin{equation}\label{LLL}
		\begin{split}
	\lim_{j\to \infty}\< \rho_{t}^{\kappa_{j}},\, H_{t}\>=	\< \rho_{t}^\star\!,\, \,H_{t} \>\,,
		\end{split}
		\end{equation}
		for almost every $t\in[0,T]$ and for all $ H\in C^{0,2}([0,T]\times[0,1])$.
		Unfortunately, this  is not enough since we need the previous convergence to hold for every $t\in[0,T]$.
		In order to do that we will do several steps that we describe as follows. 	
		 First, we will use Arzel\`a-Ascoli's theorem to find a subsequence $\{\kappa_{j_{l}}\}_{l\in\mathbb N}$ such  that the function $t \mapsto \lim_{l \to \infty} \< \rho_{t}^{\kappa_{j_{l}}}, H_{t} \> $ is continuous. To do that, we will need some technical steps, such as a diagonal argument and  extend  a bounded linear functional,  to guarantee that the subsequence $\{\kappa_{j_{l}}\}_{l\in\mathbb N}$ does not depend on the choice of the function $H$.  Then, we will have to prove that the function  $\lim_{l \to \infty} \< \rho_{t}^{\kappa_{j_{l}}}, H_{t} \> $  is equal to  $\< \rho^\star_t, H_t \>$, for all $t\in[0,T]$. For this step, we will use Riesz's representation theorem, and we will need to modify the function $t\mapsto \rho_{t}^\star\in L^2([0,1])$ for $t$ on a null measure set, which is armless since it produces an element equal to $\rho^\star$ in $L^{2}([0,T]\times[0,1])$).

	As we mentioned above, in the first step, we will construct a subsequence $\{\kappa_{j_{l}}\}_{l\in\mathbb N}$, which does not depend on the choice of $H$ and such that $t \mapsto \lim_{l \to \infty} \< \rho_{t}^{\kappa_{j_{l}}}, H_{t} \> $ is a continuous function.  If one wants to skip these technical steps, then can go directly to \eqref{L}. If not, fix $ H\in C^{0,2}([0,T]\times[0,1])$  and 
		for all $j\in \mathbb{N}$, we define the function 
		$$t\in[0,T]\mapsto\mathcal{F}_{j}(t,H_t) := \< \rho^{\kappa_j}_{t}, H_t \>.$$ 
	From Proposition \ref{prop3}, the sequence of functions $\{ \mathcal{F}_{j}(\cdot,H_\cdot)\}_{j\in \mathbb N} $ is uniformly  H\" older continuous, i.e.,
		\begin{equation*}
		\begin{split}
		|\mathcal{F}_{j}(t,H_t)-\mathcal{F}_{j}(s,H_s)| = |\< \rho_{t}^{\kappa_{j}}, H_t \> - \< \rho_{s}^{\kappa_{j}}, H_s \>| 
		\leq C \sqrt{|t-s|},
		\end{split}
		\end{equation*}
		and the constant $C$ depends on $H$, but it does not depend on $j$.
	This  implies that   $\{\mathcal{F}_{j}(\cdot,H_{\cdot})\}_{j\in \mathbb{N}}$  is equicontinuous.
		 Since $|\mathcal{F}_{j}(t,H_t)| \leq \|H\|_{\infty}$, for any $t\in[0,T] $ and $j\in\mathbb N$, the sequence $\{\mathcal{F}_{j}(\cdot,H_{\cdot})\}_{j\in \mathbb{N}}$ is also uniformly bounded. Hence, by  Arzel\`a-Ascoli's theorem, there exists a  continuous function $\mathcal{F}(\cdot,H_{\cdot}): [0,T] \to \mathbb{R}$ and a subsequence $\{\kappa^{H}_{j_{l}}\}_{l\in \mathbb{N}}$ of $\{\kappa_{j}\}_{j\in \mathbb{N}}$, depending on $H$, such that $\mathcal{F}_{j_{l}}(\cdot,H_{\cdot})$ converges uniformly to the continuous function $\mathcal{F}(\cdot,H_{\cdot})$, as $l \to \infty$. 
	Now, let us remove the dependence of the subsequence $\{\kappa^{H}_{j_{l}}\}_{l\in \mathbb{N}}$ on $H$ by using a diagonal argument.  Observe that  $C^{0,2}([0,T]\times[0,1])$ is a separable space, so there exists a dense countable subset $D \subseteq C^{0,2}([0,T]\times[0,1])$. Let $D=\{H^{i} : \, i\in \mathbb{N}\}$. For each $i\in \mathbb{N}$, we denote by $\{\kappa^{i}_{j_{l}}\}_{l\in\mathbb{N}}$ a subsequence of $\{\kappa_{j}\}_{j\in\mathbb{N}}$ depending on $H^{i}\in D$. Repeating the previous argument, since $\{\mathcal{F}_{j}(\cdot,H_{\cdot}^{1})\}_{j\in \mathbb{N}}$ is equicontinuous and uniformly bounded, there exist a continuous function $\mathcal{F}(\cdot,H_{\cdot}^{1})$ and a subsequence $\{\kappa^{1}_{j_{l}}\}_{l \in \mathbb{N}}$ such that $\mathcal{F}_{j_{l}}(\cdot,H_{\cdot}^{1})$ converges uniformly to $\mathcal{F}(\cdot,H_{\cdot}^{1})$, as $l \to \infty$. Now, we pick a subsequence $\left\{ \kappa_{j_{l}}^{2} \right\}_{l\in \mathbb{N}} \subseteq \left\{ \kappa_{j_{l}}^{1} \right\}_{l\in \mathbb{N}}$ such that the sequence of functions $\mathcal{F}_{j_{l}}(\cdot,H_{\cdot}^{2})$ converges uniformly to a continuous function $\mathcal{F}(\cdot,H_{\cdot}^{2})$, as $l \to \infty$. Proceeding in an inductive fashion, we obtain a subsequence $\left\{ \kappa_{j_{l}}^{q} \right\}_{l\in \mathbb{N}} \subseteq \left\{ \kappa_{j_{l}}^{q-1} \right\}_{l\in \mathbb{N}}$ such that the sequence of functions $\mathcal{F}_{j_{l}}(\cdot,H_{\cdot}^{q})$ converges uniformly to a continuous function $\mathcal{F}(\cdot,H_{\cdot}^{q})$, as $l \to \infty$. Fixing $q\in \mathbb{N}$, this construction guarantees that  
		\begin{equation*}\label{conv1}
		\mathcal{F}_{j_{l}}(t,H^{i}_t) =\< \rho_{t}^{\kappa_{j_{l}}^q}, H_{t}^{i} \> \xrightarrow[l \to \infty]{} \mathcal{F}(t,H^{i}_t),
		\end{equation*}
		for all $i\leq q$. Now, set $\kappa_{j_{l}}:=\kappa_{j_{l}}^l$ and note that by a diagonal argument the following convergence holds

		\begin{equation}\label{conv1}
		\mathcal{F}_{j_{l}}(t,H^{i}_t) =\< \rho_{t}^{\kappa_{j_{l}}}, H_{t}^{i} \> \xrightarrow[l \to \infty]{} \mathcal{F}(t,H^{i}_t), \;\; \forall i\in \mathbb{N}.
		\end{equation} 
			Observe  that the convergence above is uniform in $t\in[0,T]$, and the function $t\mapsto \mathcal{F}(t,H^{i}_t)$ is continuous, for all $ i\in \mathbb{N}$, since  it is a uniform limit of continuous functions.

	Fixing  $t\in [0,T]$,  the functional $H \mapsto \mathcal{F}_{j_{l}}(t,H_t)$ is linear in $D$.
	From the fact that $\rho^\kappa_t$ is bounded from above by one, we have $|\mathcal{F}_{j_l}(t,H_t)| \leq \|H\|_{\infty}$, for any $l\in\mathbb N$.
	Then, by  \eqref{conv1}, it is  simple to check that  $H \mapsto \mathcal{F}(t,H_t)$ is also a bounded linear functional in $D$. 
	As usual, the extension of   $H \mapsto \mathcal{F}(t,H_t)$ to $ C^{0,2}([0,T]\times[0,1])$ is 
	$\mathcal{F}(t,H_t)=\lim_{\varepsilon\to 0} \mathcal{F}(t,H_t^\varepsilon)$, where
	$\{H^\varepsilon:\,\varepsilon>0\}$ is a sequence on $D$ such that $H^\varepsilon$ converges to $H$ with respect $\Vert\cdot\Vert_\infty$, when $\varepsilon\to 0$. 
By $\mathcal{F}(t,H_t^\varepsilon)=\lim_{l\to \infty}\< \rho_{t}^{\kappa_{j_{l}}}, H_{t}^{\varepsilon} \> $, uniformly on $t\in [0,T]$,  and 
	$$\Big|\mathcal{F}(t,H_t^\varepsilon) -\lim_{l\to \infty}\< \rho_{t}^{\kappa_{j_{l}}}, H_{t} \> \Big|=\lim_{l\to \infty}\big|\< \rho_{t}^{\kappa_{j_{l}}}, \,(H_{t}^{\varepsilon} -H_{t} )\> \big|\leq ||H^{\varepsilon}-H||_{\infty},$$ for all $\varepsilon>0$ and $t\in [0,T]$, we have
	\begin{equation}\label{L}
	\begin{split}
\mathcal{F}(t,H_t)
=\lim_{\varepsilon\to 0}\mathcal{F}(t,H_t^\varepsilon)
=\lim_{l\to \infty}\< \rho_{t}^{\kappa_{j_{l}}},\, H_{t}\>,\\
	\end{split}
	\end{equation}
	for all  $t\in [0,T]$ and  $ H\in C^{0,2}([0,T]\times[0,1])$.  Observe that the subsequence $\{\kappa _{j_l}\}_{l\in\mathbb N}$ does not depend on the choice of $H$ and also that 
 the function $t\mapsto\mathcal{F}(t,H_t)$ is continuous.	
	
	Recall  that $\rho^\star=\lim_{j\to \infty}\rho^{\kappa_j}$ in $L^{2}([0,T]\times[0,1])$, then we have that 
		\begin{equation}\label{LL}
	\begin{split}
\mathcal{F}(t,H_t)=\lim_{l\to \infty}\< \rho_{t}^{\kappa_{j_{l}}},\, H_{t}\>=	\< \rho^\star_{t}\!,\, \,H_{t} \>\,,
	\end{split}
	\end{equation}
for almost every $t\in[0,T]$. Although the equality above is for almost every $t\in[0,T]$ as in  \eqref{LLL}, the limit $\lim_{l\to \infty}\< \rho_{t}^{\kappa_{j_{l}}},\, H_{t}\>$ now exists for all  $t\in[0,T]$, which was not the case in  \eqref{LLL}.
In order to show that this equality holds for every $t$, we  consider, for  $t\in[0,T]$ fixed, a linear functional $\tilde{\mathcal{F}}(t,\cdot)$ acting on functions $g\in L^2([0,1])$ as
$$\tilde{\mathcal{F}}(t,g):=\lim_{l\to \infty}\< \rho_{t}^{\kappa_{j_{l}}},\, g\>.\\
$$
By Cauchy-Schwarz's inequality and the fact that $0\leq \rho_{t}^{\kappa_{j_{l}}}\leq 1$, for all $l\in \mathbb N$, we have that $\tilde{\mathcal{F}}(t,\cdot)$ is a bounded linear functional in $L^2([0,1])$. Then, by Riesz's representation theorem  there exists $ \tilde\rho_t\in L^2([0,1])$ such that $\tilde{\mathcal{F}}(t,g)=\< \tilde \rho_{t},\, g\>$, for all $g\in L^2([0,1])$.
In particular, if  $ H\in C^{0,2}([0,T]\times[0,1])$, $H_t\in C^{2} ([0,1])\subset L^2([0,1])$, then 
\begin{equation}\label{4L}
\<\tilde  \rho_{t},\, H_t\>=\tilde{\mathcal{F}}(t,H_t)=\lim_{l\to \infty}\< \rho_{t}^{\kappa_{j_{l}}},\, H_t\>=\mathcal{F}(t,H_t),
\end{equation} for every $t\in [0,T]$.
The last equality is due to \eqref{L}. From \eqref{LL}, $\< \rho_{t}^\star,\, H_t\>=\< \tilde \rho_{t},\, H_t\>$, for almost every $t$ and all functions $ H\in C^{0,2}([0,T]\times[0,1])$, then $\rho_{t}^\star(u)=\tilde\rho_{t}(u)$, for almost every $(t,u)\in[0,T]\times[0,1]$, and this implies that $\rho^\star=\tilde\rho$ in $L^2([0,T]\times[0,1])$.
Therefore, in \eqref{4L} we can replace $\tilde\rho$ by $\rho^\star$ and we obtain
$$\lim_{l \to \infty} \< \rho_{t}^{\kappa_{j_{l}}}, H_{t} \> = \< \rho^\star_t,\, H_t \>, $$
for all	$ t \in [0,T]$ and $ H\in C^{0,2}([0,T]\times[0,1])$.

	\end{proof}

\section{Proof of Proposition \ref{prop:2.6}}\label{sec:prop:energy_consequence}

	From Lemmas \ref{dense} and  \ref{L2.2}, $\mathcal{T}_{\xi,m}^{\alpha,\beta}(\cdot)$  can be extended to $L_{\kappa,\xi}^{2}([0,T]\times[0,1]) $ and from  Riesz's representation Theorem, there exists  $\varphi$ in $L_{\kappa,\xi}^{2}([0,T]\times[0,1])$ such that
	$
	\mathcal{T}_{\xi,m}^{\alpha,\beta}(H) = \dl \varphi, H \dr^{\alpha,\beta}_{\kappa,\xi},
	$
	for all $H \in C^{0,1}([0,T]\times[0,1])$.
	Denoting $\varphi = - \partial_u \xi^m$, we proved  \eqref{III}.
	
	Now, observe that  for  $H\in C_c^{0,\infty}([0,T]\times (0,1))$,    equation \eqref{III} becomes $$ \dl \xi^m, \partial_{u}H \dr =-  \dl \partial_u\xi^m,H \dr ,$$ for any $m\in\mathbb N$, and from Lemma \ref{charact}
	we conclude that $\xi^m\in L^2(0,T;\mathcal H^1)$.

	Now we prove  \eqref{energy exp!!!!}. 
	Recall  that 
	\begin{equation*}
	\begin{split}
	\mathcal{E}_{m,\kappa,c}^{\alpha,\beta}(\xi)=\sup _{H\in C^{0,1}([0,T]\times [0,1])}\left\{ \mathcal{T}_{\xi,m}^{\alpha,\beta}(H) - c\<\!\<H, H\>\!\>_{\kappa,\xi}^{\alpha,\beta}\right\}.
	\end{split}
	\end{equation*}
	From \eqref{III}, it follows that
	\begin{equation}\label{eq:identity_new}
	\mathcal{T}_{\xi,m}^{\alpha,\beta}(H) - c\<\!\<H, H\>\!\>_{\kappa,\xi}^{\alpha,\beta} =  - \dl \partial_u\xi^m, H \dr_{\kappa,\xi}^{\alpha,\beta}-c \dl H,H \dr_{\kappa,\xi}^{\alpha,\beta}\,,
	\end{equation}	
	for all $H\in C^{0,1}([0,T]\times [0,1])$. 
	By Young's inequality, for $A>0$
	\begin{equation*}
	-\dl \partial_u\xi^m, H \dr_{\kappa,\xi}^{\alpha,\beta}\leq \frac{1}{2A} \dl \partial_u\xi^m, \partial_u\xi^m \dr_{\kappa,\xi}^{\alpha,\beta}+ \frac{A}{2} \dl H,H \dr_{\kappa,\xi}^{\alpha,\beta}\,.
	\end{equation*}
	Taking $A=2c$ in last expression, we get
	\begin{equation*}
	\mathcal{T}_{\xi,m}^{\alpha,\beta}(H) - c\<\!\<H, H\>\!\>_{\kappa,\xi}^{\alpha,\beta} \leq \frac{1}{4c}\dl \partial_u\xi^m, \partial_u\xi^m \dr_{\kappa,\xi}^{\alpha,\beta}\,,
	\end{equation*}	
	for all $H\in C^{0,1}([0,T]\times [0,1])$. Then, 
	\begin{equation*}
	\begin{split}
	\mathcal{E}_{m,\kappa,c}^{\alpha,\beta}(\xi)\leq 
	\,\frac{1}{4c} \<\!\< \partial_u\xi^m, \partial_u\xi^m \>\!\>_{\kappa,\xi}^{\alpha,\beta}\,.
	\end{split}
	\end{equation*}
	
	For the reversed inequality, from Lemma \ref{dense}, let  $\{H^\varepsilon\}_{\varepsilon >0}$ be a sequence of functions in $C^{\,0,1}([0,T]\times [0,1])$ converging to $-\frac{1}{2c}\partial_u\xi^m$ in $L_{\kappa,\xi}^2([0,T]\times [0,1])$, as $\varepsilon \to 0$. Thus, from \eqref{eq:identity_new} we have that
	\begin{equation*}
	\begin{split}
	\mathcal{E}_{m,\kappa,c}^{\alpha,\beta}(\xi)&= \sup_{H\in C^{0,1}([0,T]\times [0,1])} \left\{ -\dl \left(\partial_u\xi^m +cH \right), H \dr_{\kappa,\xi}^{\alpha,\beta}\right\} \\
	& \geq \lim_{\varepsilon \to 0}\left\{-\dl \left(\partial_{u} \xi^{m}+cH^{\varepsilon}\right), H^{\varepsilon} \dr_{\kappa,\xi}^{\alpha,\beta}\right\}.
	\end{split}
	\end{equation*}
	Note that  from hypothesis on $H^\varepsilon$, we have that the following convergence
	$$\partial_u\xi^m+cH^{\varepsilon}\xrightarrow[\varepsilon \to 0]{} \pfrac{1}{2}\partial_u\xi^m,$$
	holds in $L_{\kappa,\xi}^2([0,T]\times [0,1])$.
	Then,
	$$\lim_{\varepsilon \to 0}\left\{-\dl \left(\partial_{u} \xi^{m}+cH^{\varepsilon}\right), H^{\varepsilon} \dr_{\kappa,\xi}^{\alpha,\beta}\right\}= \frac{1}{4c}\dl \partial_u\xi^m, \partial_u\xi^m \dr_{\kappa,\xi}^{\alpha,\beta},$$proving that 	\begin{equation*}
	\begin{split}
	\mathcal{E}_{m,\kappa,c}^{\alpha,\beta}(\xi)\geq
	\,\frac{1}{4c} \<\!\< \partial_u\xi^m, \partial_u\xi^m \>\!\>_{\kappa,\xi}^{\alpha,\beta}\,.
	\end{split}
	\end{equation*}
	Thus, we proved the equality \eqref{energy exp!!!!}.
	
	Finally, we will show the boundary conditions for almost every $s\in(0,T]$.
	Using the definitions of $\mathcal{T}_{\xi,m}^{\alpha,\beta}(\cdot) $ and $\dl \cdot, \cdot \dr_{\kappa,\xi}^{\alpha,\beta}$, \eqref{III} becomes
	\begin{equation}\label{Riesz6}
	\begin{split}
	\int_{0}^{T}\big\{\langle \xi^{m}_s, \partial_u H_{s}\rangle + \langle \partial_u \xi^{m}_s,H_{s}\rangle \big\}\, ds 
	&=\int_{0}^{T} H_{s}(1)\left\{\beta^{m}- \dfrac{P_{m}^{\beta}(\xi^{m}_s(1))}{\kappa}\partial_u \xi^{m}_s(1)\right\}\, ds \\
	&-\int_{0}^{T}H_{s}(0)\left\{\alpha^{m}+\dfrac{P_{m}^{\alpha}(\xi^{m}_s(0))}{\kappa}\partial_u \xi^{m}_s(0)\right\}\, ds,\\ 
	\end{split}
	\end{equation}
	for all $H \in C^{0,1}([0,T]\times[0,1])$. {Observe that the previous identity can be extended to functions $H\in C^{0,0}([0,T]\times[0,1])$  with a space weak derivative.}
	For all $\varepsilon>0$ and for all continuous function $g:[0,T]\to [0,\infty)$, we define $H^{\varepsilon}:[0,T]\times[0,1]\to {[0,\infty)}$ by
	\begin{equation*}
	H^{\ve}(t,u)\;=\;H_{t}^{\ve}(u)\;=\;\left\{\begin{array}{ccc}
	g(t)\,	(-\tfrac{1}{\varepsilon}u+1), &  \quad \text{if}\,\,\,\,\,\,u\in[0, \varepsilon],\\
	0, &  \quad \text{if} \,\,\,\,\,\,u\in(\varepsilon,1].\\
	\end{array}
	\right.
	\end{equation*}
	Observe that $\lim_{\varepsilon\downarrow0}H^{\varepsilon}_t(u)=0$, for all $u\in(0,1)$,
	and 
	$\partial_{u}H^{\varepsilon}_t(u) =  -\frac{1}{\varepsilon}\mathbb{1}_{[0,\varepsilon)}(u)\,g(t)$, for all $u\in[0,1]$ and $t\in[0,T]$. 
	Thus, the term inside the integral on the left-hand side of \eqref{Riesz6} (replacing $H$ by $H^\varepsilon$) can be estimated as
	\begin{equation*}
	\begin{split}
	\langle \xi_t^m, \partial_{u}H^{\varepsilon}_t \rangle + \langle \partial_u\xi_{t}^m, H^{\varepsilon}_t \rangle
	&\leq \Big(-\frac{1}{\varepsilon}\int_{0}^{\ve} \xi_{t}^m(u)\,du + \int_{0}^{\ve}|\partial_u\xi_{t}^m(u)|\, du\Big)\, g(t) \,,
	\end{split}
	\end{equation*}
	for almost every $t\in[0,T]$.
	By the Cauchy-Schwarz's  inequality, the expression inside the parenthesis on the right-hand side of the previous inequality is bounded from above by
	\begin{equation*}
	\begin{split}
	-\frac{1}{\ve}\int_{0}^{\ve}\xi_{t}^m(u)\,du  + \|\partial_u\xi_t^m\|_{2}\sqrt{\ve}=-\frac{1}{\ve}\int_{0}^{\ve}\Big\{\xi_{t}^m(u)-\xi_t^m(0)\Big\}\,du -\xi_t^m(0) + \|\partial_u\xi_t^m\|_{2}\sqrt{\ve}.
	\end{split}
	\end{equation*}
	Since $\xi^m\in L^2(0,T;\mathcal H^1)$, from Cauchy-Schwarz's  inequality, we have
	\begin{equation*}
	\begin{split}&\frac{1}{\ve}\int_{0}^{\ve}\Big\{\xi_{t}^m(u)-\xi_t^m(0)\Big\}\,du
	=\frac{1}{\ve}	\int_{0}^{\ve}\p_v\xi_{t}^m(v) (\ve-v)\,dv\leq 	||\p_u\xi_{t}^m||_2	
	\sqrt{\frac{\varepsilon}{3}}.
	\end{split}
	\end{equation*}

	Therefore,
	\begin{equation*}
	\lim_{\ve \downarrow 0} \langle \xi_t^m, \partial_{u}H^{\varepsilon}_t \rangle + \langle \partial_u\xi_{t}^m, H^{\varepsilon}_t \rangle = -\xi_{t}^m(0)\,g(t),
	\end{equation*}
	for almost every$t\in [0,T]$.  In order to apply the Dominated Convergence Theorem, we observe that 
	from Cauchy-Schwarz's inequality 
{\begin{equation*}
	\begin{split}
\Big|\< \xi^{m}_s, \partial_u H_{s}^{\ve} \> + \< \partial_u\xi^{m}_s, H_{s}^{\ve} \>\Big|  &\leq \|\xi_{s}^{m}\|_{2} \|\partial_{u}H_{s}^{\varepsilon}\|_{2} + \|\partial_u\xi_{s}^{m}\|_{2} \|H_{s}^{\varepsilon}\|_{2} ,
	\end{split}
	\end{equation*} and by the definition of $H^\varepsilon$ the right-hand side of last expression is bounded from above by
	\begin{equation*}
	\begin{split}
	||g||_\infty\,	\Big\{\|\xi_{s}^{m}\|_{2}  + \|\partial_u\xi_{s}^{m}\|_{2} \Big\} <+\infty.
	\end{split}
	\end{equation*}
	Now from Dominated Convergence Theorem, replacing in \eqref{Riesz6} $H$ by $H^\varepsilon$, and taking the limit as $\varepsilon \to 0$, we get
	\begin{equation*}
	\begin{split}
	\int_0^T\Big\{\xi_s^{m}(0)-\alpha^{m} - \frac{P_{m}^{\alpha}(\xi_s(0))}{\kappa}\partial_u\xi_s^{m}(0)\Big\}\, g(s)\, ds=0,
	\end{split}
	\end{equation*}
	{for all continuous functions $g:[0,T]\to[0,\infty)$.}
	By a density argument,   we can extend the equality above for all $g\in L^1([0,T])$. Choosing a suitable $g$, we obtain 
	\begin{equation*}
	\begin{split}
	\xi_s^{m}(0)-\alpha^{m} - \frac{P_{m}^{\alpha}(\xi_s(0))}{\kappa}\partial_u\xi_s^{m}(0)=0,
	\end{split}
	\end{equation*}	
	for almost every $s\in (0,T]$. Therefore, we get the boundary condition on the left-hand side of \eqref{boundaryconditions}.

	In order to prove the  other boundary condition of \eqref{boundaryconditions}	is enough to repeat the previous argument with 
	\begin{equation*}
	G^{\ve}(t,u)\;=\;G_{t}^{\ve}(u)\;=\;\left\{\begin{array}{ccc}
	g(t)(\tfrac{1}{\varepsilon}(u-1)+1), &  \quad \text{if}\,\,\,\,\,\,u\in[1- \varepsilon,1],\\
	0, &  \quad \text{if} \,\,\,\,\,\,u\in[0,1-\varepsilon).\\
	\end{array}
	\right.
	\end{equation*}
	We leave the details to the reader.

\section{Proof of Theorem \ref{energy estimate consequence} }\label{IPS}

The goal of this section is to introduce an interacting particle system whose hydrodynamic equation is the porous medium equation with boundary conditions of Robin, Neumann, and Dirichlet type  as in Definitions \ref{Def. Robin}, \ref{Def.Neumann}, and \ref{Def. Dirichlet}, respectively. From this particle system, we can deduce the energy estimate stated in Theorem \ref{energy estimate consequence}. Before deducing the energy estimate, we introduce the model, and we recall the hydrodynamic limit proved in  \cite{BPGN}.
\subsection{The Porous Medium Model}

Let $n\in\mathbb N$ be a scaling parameter. Let $\Sigma_n = \{1,\ldots,n-1\}$ be a finite lattice of size $n-1$ that we call  bulk. We now introduce a Markov process, that we denote by $\{\eta_t; t\geq0\}$ and with state space $\Omega_{n} := \{0,1\}^{\Sigma_{n}}$. We denote by $\eta$ a configuration of particles on $\Omega_{n}$, such that for $x\in \Sigma_{n}$, $\eta(x)=0$ means that the site $x$ is empty while $\eta(x)=1$ means that the site $x$ is occupied. The dynamics of the model has two parts: a conservative dynamics acting on the bulk of the system and a non-conservative dynamics acting at the boundary points $1$ and $n-1$.   The bulk dynamics can be described by associating a Poisson clock at each bond of the form $(x,x+1)$, with $x\in\{1,\ldots,n-2\}$ and with a parameter depending on the constraints of the process. To define the dynamics at the boundary we first need to artificially add the sites $0$ and $n$ to the bulk. Hereafter,  at the left boundary (resp. right boundary) we add Poisson clocks at the bonds $(0,1)$ (resp. $(n-1,n)$) and $(1,0)$ (resp. $(n,n-1)$) with a parameter that depends on the injection or removal  rate at site $1$ (resp. $n-1$).  All these rates will be defined below.  

Let $\theta \geq 0$, $\kappa>0$, $a\in(1,2)$, $\alpha, \beta \in (0,1)$, and $m\in\mathbb N$. For $x,y,z\in \Sigma_{n}$ we denote the exchange and flip configurations by
\begin{equation*}
\eta^{x,y}(z) = 
\begin{cases}
\eta(z), \; z \ne x,y,\\
\eta(y), \; z=x,\\
\eta(x), \; z=y,
\end{cases}
\quad \text{and} \quad \eta^x(z)= 
\begin{cases}
\eta(z), \; z \ne x,\\
1-\eta(x), \; z=x.
\end{cases}
\end{equation*}
With these notations, we define the infinitesimal generator of the process acting on functions $f: \Omega_{n} \to \mathbb{R}$ as
\begin{equation}\label{eq:gen_full}
(L^{m}_{n}f)(\eta) = (L^{m}_{P}f)(\eta) + n^{a-2}(L_{S}f)(\eta) + (L_{B}f)(\eta),\end{equation}
where 
\begin{equation}\label{generator PMM}
(L^{m}_{P}f)(\eta) = \sum_{x=1}^{n-2}c^{m}_{x,x+1}(\eta)\{a_{x,x+1}(\eta)+a_{x+1,x}(\eta)\}[f(\eta^{x,x+1})-f(\eta)],
\end{equation}
is the generator of the porous medium model,
\begin{equation}\label{genrator SSEP}
(L_{S}f)(\eta) = \sum_{x=1}^{n-2}\{a_{x,x+1}(\eta)+a_{x+1,x}(\eta)\}[f(\eta^{x,x+1})-f(\eta)],
\end{equation}
is the generator of the symmetric simple exclusion process (SSEP), and 
\begin{equation}\label{generator boundary}
(L_B f)(\eta) = \tfrac{\kappa}{n^{\theta}}I^{\alpha}_{1}(\eta)[f(\eta^1)-f(\eta)] + \tfrac{\kappa}{n^{\theta}}I^{\beta}_{n-1}(\eta)[f(\eta^{n-1})-f(\eta)],
\end{equation}
is the generator of the Glauber dynamics acting at sites $1$ and $n-1$.

\noindent
Let $\eta$ be a configuration of particles on $\Omega_{n}$. For $x,y\in\{1,\ldots,n-2\}$, we define the exchange rates at the bulk by
	\begin{equation}\label{PMM rate}
	c^{m}_{x,x+1}(\eta) = \sum_{k=1}^{m}\;\;\mathop{\prod_{j=-(m-k)}^{k}}_{j\neq 0,1}\eta(x+j), \;\; 
	\end{equation}
	\begin{equation}\label{SSEP rate}
	a_{x,y}(\eta) = \eta(x)(1-\eta(y)),
	\end{equation}
and at the boundary, for $z\in\{1,n-1\}$ and $\gamma\in\{\alpha,\beta\}$
\begin{equation}\label{Boundary rates}
\begin{split}
I_{z}^{\gamma}(\eta) &=  \gamma(1-\eta(z)) + (1-\gamma)\eta(z).
\end{split}
\end{equation}

\begin{remark}
Above we used the convention
	\begin{equation}\label{convention}
	\begin{split}
	\eta(x)&=\alpha, \;\;\; \text{for} \;\;\; x \leq 0, \\
	\eta(x)&=\beta, \;\;\; \text{for} \;\;\; x \geq n.
	\end{split}
	\end{equation}
\end{remark}

Note that when $m=1$ the rate \eqref{PMM rate} is equal to 1 and \eqref{generator PMM} is exactly the generator of the SSEP. The reader can see in Figure \ref{fig.1} the dynamics of the model for the case $m=2$.

\begin{figure}[htb]
	\begin{center}
		\begin{tikzpicture}[thick, scale=0.75][h!]
		\draw [line width=1] (-7,10.5) -- (7,10.5) ; 
		\foreach \x in  {-7,-6,-5,-4,-3,-2,-1,0,1,2,3,4,5,6,7} 
		\draw[shift={(\x,10.5)},color=black, opacity=1] (0pt,0pt) -- (0pt,-4pt) node[below] {};
		\draw[] (-2.8,10.5) node[] {};
		
		\draw[] (-6,10.3) node[below] {\tiny{$1$}};
		\draw[] (-5,10.3) node[below] {\tiny{$2$}};
		\draw[] (2,10.3) node[below] {\tiny{$x$}};
		\draw[] (5,10.3)  node[below] {\tiny{$n-2$}};
		\draw[] (6,10.3) node[below] {\tiny{$n-1$}};
		
		
		
		\node[ball color=blue!60!, shape=circle, minimum size=0.1cm] at (-7.,10.73) {};
		
		\node[ball color=blue!60!, shape=circle, minimum size=0.1cm] (A) at (-7.,11.33) {};
		
		\node[shape=circle,minimum size=0.5cm] (A1) at (-7.,12) {};
		
		\node[shape=circle,minimum size=0.5cm] (B) at (-6.,11.18) {};
		
		\node[shape=circle,minimum size=0.5cm] (B1) at (-6.,12) {};
		
		\node[ball color=black!60!, shape=circle, minimum size=0.1cm] (G) at (-4,10.73) {};
		\node[shape=circle,minimum size=0.5cm] (H) at (-3,10.55) {};
		\node[ball color=black!60!, shape=circle, minimum size=0.1cm] (I) at (-2,10.73) {};
		\node[ball color=black!60!, shape=circle, minimum size=0.1cm] (J) at (-1,10.73) {};
		\node[shape=circle,minimum size=0.5cm] (K) at (0,10.73) {};
		\node[ball color=black!60!, shape=circle, minimum size=0.1cm] at (2,10.73) {};
		\node[shape=circle,minimum size=0.5cm] (L) at (1,10.73) {};
		\node[shape=circle,minimum size=0.5cm] (M) at (3,10.73) {};
		\node[ball color=black!60!, shape=circle, minimum size=0.1cm] (N) at (2,10.73) {};
		
		\node[shape=circle,minimum size=0.5cm] (F) at (5.,10.73) {};
		\node[shape=circle,minimum size=0.5cm] (C) at (6.,10.73) {};
		\node[shape=circle,minimum size=0.5cm] (C1) at (6,11.68) {};
		\node[shape=circle,minimum size=0.5cm] (E) at (6.,11.18) {};
		\node[ball color=black!60!, shape=circle, minimum size=0.015cm] (E) at (6,10.73) {};
		\node[ball color=blue!60!, shape=circle, minimum size=0.015cm] () at (7.,10.73) {};
		\node[shape=circle,minimum size=0.5cm] (D) at (7.,10.73) {};
		\node[shape=circle,minimum size=0.5cm] (D1) at (7.,11.68) {};
		
		\path [->] (A) edge[bend left =60] node[above] {\tiny{$\frac{\kappa\alpha}{n^\theta}$}} (B);    
		\path [->] (C) edge[bend left=60] node[above] {\tiny{$\frac{\kappa(1-\beta)}{n^\theta}$}} (D); 
		\path [->] (G) edge[bend left=60] node[above] {\tiny{$1\textcolor{black}{+\frac{n^{a}}{n^{2}}}$}} (H);
		\path [->] (I) edge[bend right=60] node[above] {\tiny{$2\textcolor{black}{+\frac{n^{a}}{n^{2}}}$}} (H);
		\path [->] (J) edge[bend left=60] node[above] {\tiny{$1\textcolor{black}{+\frac{n^{a}}{n^{2}}}$}} (K);
		\path [->] (N) edge[bend left=60] node[above] {\tiny{$\textcolor{black}{\frac{n^{a}}{n^{2}}}$}} (M);
		\path [->] (N) edge[bend right=60] node[above] {\tiny{$\textcolor{black}{\frac{n^{a}}{n^{2}}}$}} (L);
		\path [->] (C) edge[bend right=60] node[above] {\tiny{$\beta+\frac{n^{a}}{n^2}$}} (F);
		
		\tclock{-13}{19.3}{gray}{0.5}{0}
		\tclock{-7}{19.3}{gray}{0.5}{0}
		\tclock{-5}{19.3}{gray}{0.5}{0}
		\tclock{-1}{19.3}{gray}{0.5}{0}
		\tclock{3}{19.3}{gray}{0.5}{0}
		\tclock{5}{19.3}{gray}{0.5}{0}
		\tclock{13}{19.3}{gray}{0.5}{0}
		\tclock{11}{19.3}{gray}{0.5}{0}

		\end{tikzpicture}
		\bigskip
		\caption{The porous medium model with slow reservoirs (with $m=2$)}\label{fig.1}
	\end{center}
\end{figure}
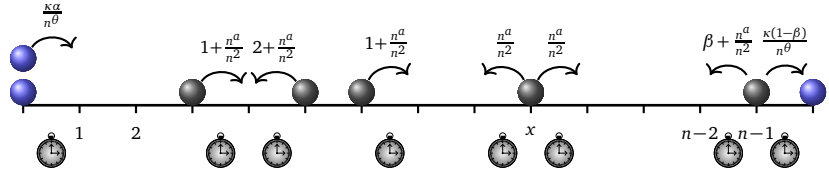

\noindent Let $\eta \in \Omega_{n}$ and $\alpha,\beta \in (0,1)$. For every $x\in\{1,\ldots,n-2\}$, the instantaneous current associated to the bond $(x,x+1)$ is given by
\begin{equation}\label{eq:current}
j^{m}_{x,x+1}(\eta) = \tau_{x}h^{m}(\eta) - \tau_{x+1}h^{m}(\eta),
\end{equation}
where
\begin{equation}\label{shift}
\tau_{x}h^{m}(\eta) = \sum_{k=1}^{m}\;\; \prod_{j=-(m-k)}^{k-1}\eta(x+j) - \sum_{k=1}^{m-1}\;\; \mathop{\prod_{j=-(m-k)}^{k}}_{j \neq 0}\eta(x+j) +n^{a-2}\eta(x). 
\end{equation}
Observe that from the convention in \eqref{convention}, for $x=1$ (resp. $n-1$), we have
\begin{equation}\label{shift-bd}
\begin{split}
\tau_{1}h^{m}(\eta) &= \sum_{k=0}^{m-1}\alpha^{k}\; \prod_{j=1}^{m-k}\eta(j) - \sum_{k=1}^{m-1}\alpha^{k}\; \prod_{j=2}^{m+1-k} \eta(j)+n^{a-2}\eta(1),\\
\tau_{n-1}h^{m}(\eta) &= \sum_{k=0}^{m-1}\beta^{k}\; \prod_{j=1}^{m-k}\eta(n-j) - \sum_{k=1}^{m-1}\beta^{k}\; \prod_{j=2}^{m+1-k} \eta(n-j)+n^{a-2}\eta(n-1).
\end{split}
\end{equation}
\begin{remark}\label{remark:exclusion terms}
The identities above share a term of the form $n^{a-2}\eta(x)$. These terms come from the SSEP dynamics, and since they  both vanish, as $n\to \infty$, from here on, we ignore them  and we  only look  at the contribution of the  remaining terms.
\end{remark}

Let $\{\eta_{tn^2};t\geq 0\}$ be the Markov process speeded up in the diffusive time scale $tn^2$ and with  infinitesimal generator $n^2L^{m}_{n}$.

\subsection{Hydrodynamic limit}
Let us begin this subsection by introducing the empirical measure associated to the process $\{\eta_{tn^2};t\geq 0\}$. For $\eta \in \Omega_{n}$, this measure gives weight $1/n$ to each particle
\begin{equation*}
\pi^{n}(\eta, du) := \frac{1}{n}\sum_{x\in \Sigma_n}\eta(x)\delta_{x/n}(du),
\end{equation*}
where $\delta_u$ is a Dirac mass on $u\in[0,1]$. In order to analyze the temporal evolution of the empirical measure, we define $\pi^{n}_{t}(\eta,du) := \pi^{n}(\eta_{tn^2},du)$. For a test function $G:[0,1]\to \mathbb{R}$, we denote the integral of $G$ with respect to  $\pi_{t}^{n}$, by $\<\pi^{n}_{t},G\>$, which is equal to 
\begin{equation}\label{eq:emp_mea}
\textcolor{black}{\< \pi^{n}_{t},G\>} = \frac{1}{n}\sum_{x\in \Sigma_n}G\left(\tfrac{x}{n}\right)\eta_{tn^2}(x).
\end{equation}
\textcolor{black}{Note that the notation $\< \cdot, \cdot \>$ above is not related to the inner product in $L^{2}([0,1])$.}
Fix $T>0$ and $\theta \geq 0$. Let $\mu_n$ be a probability measure in $\Omega_{n}$. We denote by $\mathcal{D}([0,T], \Omega_{n})$ the Skorokhod space, that is, the space of c\`adl\`ag trajectories. We denote by $\mathbb{P}_{\mu_n}$ the probability measure on the space $\mathcal{D}([0,T], \Omega_{n})$ induced by the accelerated Markov process$\{\eta_{tn^2}; t\geq 0\}$and the initial measure $\mu_n$. The corresponding expectation is denoted by $\mathbb{E}_{\mu_n}$. Let $\mathcal{M}_{+}$ be the space of positive measures on $[0,1]$ with total mass bounded by $1$ and equipped with the weak topology. We denote by $\{\mathbb{Q}_n\}_{n\in\mathbb{N}}$ the sequence of probability measures on $\mathcal{D}([0,T], \mathcal{M}_{+})$, {defined as $\mathbb{Q}_n=\mathbb{P}_{\mu_n}(\pi_{t}^{n})^{-1}$}. The corresponding expectation is denoted by $\mathbb{E}_{n}$. 

In order to state the hydrodynamic limit for$\{\eta_{tn^2}; t\geq 0\}$presented in \cite{BPGN}, we need to impose some conditions on the initial distribution of the process. Let $g: [0,1]\rightarrow[0,1]$ be a measurable function and $\{\mu_n\}_{n\in \mathbb{N}}$ be a sequence of probability measures on $\Omega_{n}$ such that for any continuous function $G:[0,1]\rightarrow \mathbb{R}$ and any $\delta > 0$ 
	\begin{equation}\label{eq:associated}
	\lim _{n\to \infty } \mu _n\Bigg( \eta \in \Omega_n : \Bigg| \< \pi^{n},G\> - \int_{0}^{1} G(u)g(u)\,du \, \Bigg|    > \delta \Bigg)= 0
	\end{equation}
and in this case we say that $\mu_n$ is associated with $g$. The aim of the hydrodynamic limit is to show that the empirical measure $\pi^{n}_{\cdot}$ converges in probability, with respect to $\mathbb P_{\mu_n}$, when $n\to \infty$, to a deterministic trajectory of measures $\pi_{\cdot}$, such that for each $t$, $\pi_t(du)$  is absolutely continuous with respect to the Lebesgue measure, that is, $\pi_{t}(du) = \rho_t(u)\,du$ and $\rho_t$ is the weak solution of the corresponding partial differential equation with certain boundary conditions and with initial condition $g$.   The precise statement is given in the next theorem.
\begin{theosn}[\cite{BPGN}]\label{hydrodynamic limit}
	Let $g:[0,1]\rightarrow[0,1]$ be a measurable function and $\lbrace\mu _n\rbrace_{n \in \mathbb{N}}$  a sequence of probability measures on $\Omega_n$ associated with $g$.  Then, for any $t \in [0,T]$ and any $\delta>0$,
	\begin{equation*}\label{limHidreform}
	\lim_{n \to \infty}\mathbb{P}_{\mu_n}\Bigg( \eta_{\cdot} \in \mathcal{D}([0,T], \Omega_n): \Bigg| \< \pi^{n}_{t},G\> - \int_{0}^{1} G(u)\rho_t(u)\,du  \,  \Bigg| > \delta \Bigg)=0,
	\end{equation*}
	where
	\begin{itemize}
		\item[$\bullet$] $\rho_{t}(\cdot)$ is a weak solution of \eqref{eq:Dirichlet}, for \textcolor{black}{$0 \leq \theta<1$};
		\item[$\bullet$] $\rho_{t}(\cdot)$ is a weak solution of (\ref{eq:Robin}), for $\theta =1$, and in the present paper it is denoted by $\rho^\kappa_t(\cdot)$;
		\item[$\bullet$] $\rho_{t}(\cdot)$ is a \textcolor{black}{weak solution of (\ref{eq:Neumann}),  for $\theta >1$}.
	\end{itemize}
\end{theosn}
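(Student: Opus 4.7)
The plan is to prove the hydrodynamic limit by the classical \emph{entropy method}, adapting it to the degenerate bulk dynamics and the reservoir scaling $\kappa n^{-\theta}$. Concretely, I would pass through the standard three steps: tightness of $\{\mathbb{Q}_n\}_{n\in\bb N}$ in $\mathcal D([0,T],\mathcal M_+)$, characterization of the support of any limit point $\mathbb Q_\infty$ as trajectories $\pi_t(du)=\rho_t(u)\,du$ with $\rho_t$ solving the corresponding weak formulation, and then invoke the uniqueness results for \eqref{eq:Robin}, \eqref{eq:Neumann}, \eqref{eq:Dirichlet} to upgrade the subsequential weak convergence to convergence in probability.

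For \emph{tightness}, I would apply Aldous' criterion: it suffices to control $\mathbb E_{\mu_n}\bigl[\,|\<\pi^n_{t+h},G\>-\<\pi^n_t,G\>|\,\bigr]$ uniformly on stopping times. Using Dynkin's formula, write $\<\pi^n_t,G\>=\<\pi^n_0,G\>+\int_0^t n^2 L_n^m\<\pi^n_s,G\>\,ds+M^n_t(G)$. The key identity $L^m_n\eta(x)=\tau_{x-1}h^m(\eta)+\tau_{x+1}h^m(\eta)-2\tau_x h^m(\eta)$ on the bulk, together with two discrete summations by parts, rewrites the integral term as $n^{-1}\sum_x \Delta_n G(x/n)\,\tau_x h^m(\eta_{sn^2})$ plus boundary contributions of order $n\bigl\{G(\tfrac1n)\tau_1 h^m-G(\tfrac{n-1}n)\tau_{n-1}h^m\bigr\}$ and the Glauber generator contribution $\kappa n^{2-\theta}$ times bounded boundary expressions. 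Since $\tau_x h^m$ and $I_z^\gamma$ are bounded, these estimates together with a routine bound on the quadratic variation of $M^n_t(G)$ yield tightness, provided $\theta\geq 0$ (when $\theta<1$ the reservoir term is handled using the cut-off $G\in C_c^{1,2}$ or by passing the boundary terms through the integration by parts in the weak formulation).

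The main step is the \emph{identification of the limit}. After the summations by parts, the martingale decomposition of $\<\pi^n_t,G\>$ expresses everything in terms of the local function $\tau_x h^m$. Here a \emph{replacement lemma} is needed to show that, in the diffusive time scale, one may substitute $\tau_x h^m(\eta_{sn^2})$ by $(\eta^{\varepsilon n}_s(x))^m$, where $\eta^{\varepsilon n}_s(x)$ is the spatial average of $\eta$ over a box of size $\varepsilon n$ around $x$; this is the usual one-block / two-block strategy based on entropy bounds $H(\mu_n^t|\nu_\rho^n)\leq Cn$, the logarithmic Sobolev / spectral gap estimates for the Dirichlet form of $n^2L_S$ (the SSEP perturbation is precisely what makes the Dirichlet form strong enough to give this, since without it the PMM rates may vanish on large blocks), and the equivalence of ensembles. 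Once this replacement is in place one obtains, in the limit, $\int_0^t\<\rho_s^m,\Delta G_s\>\,ds$ in the bulk and the expected boundary terms from the currents $\tau_1 h^m\to \rho_s(0)^m$, $\tau_{n-1}h^m\to\rho_s(1)^m$. The boundary contribution of the Glauber generator, namely $\kappa n^{2-\theta}\cdot n^{-1}\bigl\{G(\tfrac1n)(\alpha-\eta(1))+G(\tfrac{n-1}n)(\beta-\eta(n-1))\bigr\}$, now dictates the three regimes: for $\theta>1$ the factor $n^{1-\theta}\to 0$ and the boundary contribution vanishes, which combined with $\tau_1 h^m\to\rho_s(0)^m$ and the summation by parts yields the integral equation with test functions $G\in C^{1,2}$ and no Dirichlet/Robin term, i.e.\ the Neumann formulation; for $\theta=1$ the prefactor is $O(1)$ and one directly recovers the Robin integral equation \eqref{eq:Robin integral}; for $\theta<1$ the prefactor diverges, which forces $\rho_s(0)=\alpha,\ \rho_s(1)=\beta$ almost everywhere (one restricts to test functions in $C_0^{1,2}$ so that boundary terms are absent in the weak formulation, and the Dirichlet data appear as in Definition \ref{Def. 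Dirichlet}(3) via the energy-type estimate near the boundary).

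The hard part will be the replacement lemma, because the PMM jump rates \eqref{PMM rate} vanish on configurations lacking the required cluster of $m-1$ particles, so the Dirichlet form of $L^m_P$ alone does not control the $H^{-1}$-norm of $\tau_x h^m-\<h^m\>_{\rho_t(x/n)}$. This is exactly why the SSEP perturbation with rate $n^{a-2}$, $a\in(1,2)$, is added: its Dirichlet form gives a bound of the correct order $n^{a-2}\cdot n$, large enough on mesoscopic boxes to run the standard two-block estimate and negligible macroscopically (since $a<2$). The second genuine difficulty is the boundary analysis when $\theta<1$: one must show that the density near $0$ and $1$ converges to the reservoir densities, which requires a refined boundary replacement of $\eta(1)$ and $\eta(n-1)$ by $\alpha$ and $\beta$, typically obtained from a Dirichlet form estimate localized at the boundary exploiting the Glauber rate $\kappa n^{2-\theta}$.
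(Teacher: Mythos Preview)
The paper does not prove this theorem: it is quoted from \cite{BPGN} and immediately followed by the remark that it is a corollary of the weak convergence $\mathbb Q_n\Rightarrow\mathbb Q$ (also cited from \cite{BPGN}), with no further argument given here. Your outline of the entropy method (tightness via Aldous and Dynkin, summation by parts to expose $\tau_x h^m$, one-block/two-block replacement lemmas using the SSEP perturbation to compensate for the degeneracy of the PMM rates, boundary replacement in the three $\theta$-regimes, and uniqueness of weak solutions) is the correct strategy and matches what \cite{BPGN} actually does, as one can infer from the present paper's repeated references to Theorem~5.10, Lemmas~5.11--5.15 and Proposition~6.1 of \cite{BPGN}.
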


In fact the theorem above is a corollary of the next result also proved in \cite{BPGN}:

\begin{propsn}[\cite{BPGN}]
The sequence of probability measures $\{\bb Q_{n}\}_{n\in \mathbb N}$  converges weakly to $\bb Q$, when $n\to \infty$, where $\bb Q$ is a Delta of Dirac measure on top of the trajectory of measures that are absolutely continuous with respect to the Lebesgue measure, i.e., $\pi_{t}(du) = \rho_t(u)du$ and  the density $\rho_t$ is the unique weak solution of the corresponding hydrodynamic equation. 
\end{propsn}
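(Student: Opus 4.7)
The plan is to follow the now-classical three-step scheme for hydrodynamic limits: (i) establish tightness of $\{\bb Q_n\}_{n\in\bb N}$ in $\mc D([0,T],\mc M_+)$; (ii) show that every limit point is concentrated on trajectories of the form $\pi_t(du)=\rho_t(u)\,du$ with $\rho_t:[0,1]\to[0,1]$ measurable and $\rho$ a weak solution of the appropriate hydrodynamic equation (Dirichlet, Robin, or Neumann depending on $\theta$); (iii) invoke the uniqueness of the weak solution (recalled in Section~\ref{sec: hydro eq}) to conclude that $\bb Q$ is a Delta of Dirac on that unique solution, from which weak convergence of the whole sequence follows.

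For tightness, since $\mc M_+$ is compact under weak topology, it suffices by Prohorov's theorem plus Aldous' criterion to control the oscillations of $t\mapsto\<\pi^n_t,G\>$ for $G$ in a dense subset of $C([0,1])$, which I would take to be $C^2([0,1])$. Writing the Dynkin martingale
\begin{equation*}
M^{n,G}_t=\<\pi^n_t,G\>-\<\pi^n_0,G\>-\int_0^t n^2 L^m_n\<\pi^n_s,G\>\,ds,
\end{equation*}
bounding its quadratic variation by an explicit calculation from \eqref{eq:gen_full}, and checking that $n^2 L^m_n\<\pi^n,G\>$ is uniformly bounded in $n$ (the bulk rates sum to a discrete second-difference of $G$ thanks to \eqref{heuristics}, while the boundary terms carry a $\kappa n^{-\theta}$ prefactor), one obtains the desired stopping-time estimate.

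For the identification of limit points, the key is to use again the martingale $M^{n,G}_t$ and pass to the limit $n\to\infty$. The bulk generator acting on $\<\pi^n_\cdot,G\>$ produces a term of the form $\tfrac{1}{n}\sum_x \Delta_nG(x/n)\,\tau_x h^m(\eta_{sn^2})$ (cf.\ \eqref{heuristics} and \eqref{shift}), and the boundary generator produces a term supported on sites $1$ and $n-1$ with prefactor $n^{2-\theta}\kappa$. The main obstacle is the replacement lemma: one must show that $\tau_x h^m(\eta_{sn^2})$ can be replaced, in the time-space integral tested against $\Delta_n G$, by $(\rho^\varepsilon_s(x/n))^m$, where $\rho^\varepsilon$ is a local average of the empirical density, and then close the argument letting $\varepsilon\to 0$. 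This rests on the standard one-block and two-block estimates based on Dirichlet form bounds, but with the added subtlety that the PMM rates \eqref{PMM rate} are degenerate: without the SSEP perturbation $n^{a-2}L_S$, the process is not ergodic on all hyperplanes of constant number of particles, which is precisely why the term $n^{a-2}$ was introduced in \eqref{shift}. The SSEP part of the generator provides sufficient mixing at a rate that is still macroscopically negligible (since $a<2$), and this is the heart of the argument following \cite{patricia, BPGN}. According to the scale $\theta$, the boundary terms either vanish (yielding Neumann, $\theta>1$), produce the linear Robin contribution $\kappa((\rho_s(0)-\alpha)G_s(0)+(\beta-\rho_s(1))G_s(1))$ (the case $\theta=1$), or force the trace conditions $\rho_s(0)=\alpha,\rho_s(1)=\beta$ in the limit (for $\theta<1$); in the last case one restricts the test functions to $G\in C^{1,2}_0([0,T]\times[0,1])$ to kill the boundary contribution and recovers the Dirichlet trace through an independent argument based on energy estimates.

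Finally, combining steps (i)--(ii) shows that every limit point $\bb Q^\star$ is concentrated on weak solutions of the PDE prescribed by the value of $\theta$; uniqueness of such weak solutions (Section~7 of \cite{BPGN}) forces $\bb Q^\star=\delta_{\pi_\cdot}$ with $\pi_t(du)=\rho_t(u)\,du$, and hence $\bb Q_n\Rightarrow\bb Q$. The truly hard step, as indicated, is the replacement lemma for the non-linear current $\tau_x h^m$ in the presence of kinetic constraints, which requires the SSEP correction to recover ergodicity without polluting the macroscopic equation.
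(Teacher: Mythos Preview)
The present paper does not prove this proposition: it is stated as a result from \cite{BPGN} and simply quoted here in order to use it in the proof of Theorem~\ref{energy estimate consequence} (to remove the expectation $\bb E_{\bb Q}$ once $\bb Q$ is known to be a Dirac mass). So there is no ``paper's own proof'' to compare against beyond the reference to \cite{BPGN}.

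That said, your outline is an accurate summary of the strategy employed in \cite{BPGN}: tightness via Aldous' criterion applied to the Dynkin martingale, characterization of limit points through the replacement lemma (which is indeed the hard step, and where the SSEP perturbation $n^{a-2}L_S$ is essential to overcome the degeneracy of the PMM rates), and uniqueness of the weak solution to conclude. One small slip: in the Robin case $\theta=1$, the boundary contribution that survives in the limit is $\kappa\big((\alpha-\rho_s(0))G_s(0)+(\beta-\rho_s(1))G_s(1)\big)$, not $\kappa\big((\rho_s(0)-\alpha)G_s(0)+(\beta-\rho_s(1))G_s(1)\big)$; compare with \eqref{eq:Robin integral}. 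Also, for $\theta<1$ the Dirichlet trace is obtained in \cite{BPGN} not through the same energy functional as in the present paper but through a boundary replacement lemma that exploits the diverging factor $n^{2-\theta}$ in the Dirichlet form at the reservoir bonds.
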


\subsection{Proof of Theorem \ref{energy estimate consequence}}
\label{sec: energy}

In the beginning of this subsection, we present the proof of Theorem \ref{energy estimate consequence} and then we state our  main result on energy estimates, which we prove by using interacting particle systems' tools, see Proposition \ref{prop:energy_estimate}. We dedicated a whole subsection to prove the proposition, where we recall some results from \cite{BPGN}.

\begin{proof}[Proof of Theorem \ref{energy estimate consequence}]
The proof of Theorem \ref{energy estimate consequence} is a simple consequence of the next proposition, which states that  the expectation with respect to $\mathbb Q$ of $\mathcal E_{m,\kappa,c}^{\alpha, \beta}(\rho^\kappa)$ is bounded.  To remove the expectation in \eqref{estimate1}, we use the  last proposition,
 and then, the  first result in Theorem \ref{energy estimate consequence} follows. Now, for \eqref{eq:Robin_bc}  we start  by observing that $(\rho^\kappa)^m\in L^2([0,T]\times [0,1])$. Moreover, since $\mathcal E_{m,\kappa,c}^{\alpha, \beta}(\rho^\kappa)	<\infty$,  from \eqref{boundaryconditions} the identities in \eqref{eq:Robin_bc} follow. Finally, to prove the boundedness of  $\left\{ (\rho^\kappa)^{m}:\, \kappa > 0 \right\}$ in $L^{2}(0,T; \mathcal{H}^1)$, we argue as follows. By Definition \ref{L2 space weight},  \eqref{energy exp!!!!} and \eqref{energy estimate good}, it holds   that
	\begin{equation}\label{6.1}
	\begin{split}
	\dl \p_u(\rho^\kappa)^m,\,\p_u(\rho^\kappa)^m \dr
	&\leq 	\dl \p_u(\rho^\kappa)^m,\,\p_u(\rho^\kappa)^m \dr_{\kappa,\rho^\kappa}^{\alpha,\beta}\\
	&=	\,4c\,\mathcal E_{m,\kappa,c}^{\alpha, \beta}(\rho^\kappa)	\leq 4c\,M_0,\\
	\end{split}
	\end{equation}
	for all $\kappa>0$.
	Therefore, from the definition of the norm in $L^{2}(0,T;\mathcal{H}^1)$ given in \eqref{sobolev norm 2}, and the fact that $0 \leq (\rho^\kappa)^m \leq 1$,  we have
	\begin{equation*}
	\begin{split}
	\| (\rho^\kappa)^m \|_{L^{2}(0,T;\mathcal{H}^1)}^{2} 
	&= \dl (\rho^\kappa)^m, (\rho^\kappa)^m \dr + \dl \partial_u (\rho^\kappa)^m, \partial_u (\rho^\kappa)^m \,\dr 
	\leq T+ 4cM_{0},
	\end{split}
	\end{equation*}
	concluding the proof.
To finish, we are only left to prove the next proposition. 
	
\end{proof}

\begin{proposition}\label{prop:energy_estimate}
	For any $\kappa >0$ and any $m\in \mathbb N$, there exist constants $M_0$ and $c$, that do not depend on $\kappa$, such that 
	\begin{equation}\label{estimate1}
	\mathbb{E}_{\bb Q}\left[ \mathcal E_{m,\kappa,c}^{\alpha, \beta}(\rho^\kappa)\right] \,\leq \, M_0 < \infty,
	\end{equation}
	where $\rho^\kappa$ is the unique weak  solution  of \eqref{eq:Robin},  $\bb Q$ is a limit point of $\bb Q_{n}$ and $\mathcal E_{m,\kappa,c}^{\alpha, \beta} $ is defined in \eqref{energy_functional}.
\end{proposition}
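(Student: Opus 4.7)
The strategy is to realize the energy functional as a limit of microscopic functionals coming from the Markov process $\{\eta_{tn^2}:t\in[0,T]\}$, and to bound those via the standard entropy/Feynman--Kac machinery. Fix a countable family $\{H^k\}_{k\geq 1}\subset C^{0,1}([0,T]\times[0,1])$ dense in a suitable norm. By monotone convergence it is enough to prove, for every $\ell$ and some $M_0>0$ independent of $\ell$ and $\kappa$, that
\[
\mathbb{E}_{\mathbb Q}\Big[\max_{1\leq k\leq \ell}\big\{\mathcal T_{\rho^\kappa,m}^{\alpha,\beta}(H^k)-c\,\dl H^k,H^k\dr_{\kappa,\rho^\kappa}^{\alpha,\beta}\big\}\Big]\leq M_0.
\]
Next, to each test function $H$ I would associate a microscopic path functional $\Phi_n(H,\eta_\cdot)$ read off from the Dynkin exponential martingale of $\langle\pi^n_s,H_s\rangle$. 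Applying Abel summation in $n^2L_n^m\langle\pi^n_s,H_s\rangle$, and using the boundary convention $\eta(x)=\alpha$ for $x\leq 0$ and $\eta(x)=\beta$ for $x\geq n$ together with \eqref{shift-bd}, one obtains contributions converging, as $n\to\infty$ under $\mathbb Q_n$, to (i) $\dl (\rho^\kappa)^m,\partial_u H\dr$ from the bulk, (ii) the boundary terms $\int_0^T\{\alpha^m H_s(0)-\beta^m H_s(1)\}\,ds$ coming from the boundary values of $\tau_x h^m$, and (iii) a Glauber contribution from $L_B$ which, once combined with \eqref{eq:Robin_bc}, regroups into $\mathcal T_{\rho^\kappa,m}^{\alpha,\beta}(H)$. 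The quadratic variation of the martingale supplies the negative quadratic term $c\dl H,H\dr_{\kappa,\rho^\kappa}^{\alpha,\beta}$: the bulk part gives the Lebesgue term, while the Glauber rate $\kappa/n^\theta$ combined with the factor $P_m^\gamma(\eta)$ that arises from $\tau_1h^m-\alpha^m$ (and symmetrically at the right boundary) produces exactly the boundary weights $P_m^\alpha(\xi_s(0))/\kappa$ and $P_m^\beta(\xi_s(1))/\kappa$ built into Definition \ref{L2 space weight}.

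Invoking the weak convergence $\mathbb Q_n\to\mathbb Q$ and the uniform $L^\infty$ bound on $\rho^\kappa$, the macroscopic bound is reduced to
\[
\liminf_{n\to\infty}\mathbb E_n\Big[\max_{1\leq k\leq \ell}\Phi_n(H^k,\cdot)\Big]\leq M_0.
\]
I would then apply the entropy inequality against a reference Bernoulli product measure $\nu_*^n$ of constant density $\rho_*\in(0,1)$, together with the bound $H(\mu_n\,|\,\nu_*^n)\leq Cn$ and $\exp(\max_k X_k)\leq\sum_k e^{X_k}$, to obtain
\[
\mathbb E_n[\max_k\Phi_n(H^k)]\;\leq\;\frac{C}{\gamma}+\frac{\log \ell}{\gamma n}+\frac{1}{\gamma n}\max_k\log \mathbb E_{\nu_*^n}\!\big[e^{\gamma n\,\Phi_n(H^k)}\big].
\]
Feynman--Kac then reduces the last term to a variational bound for the largest eigenvalue of $n^2 L_n^m+\gamma n V_{H^k}$, where $V_{H^k}$ is the instantaneous form of $\Phi_n(H^k)$. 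Bounding $V_{H^k}$ against the bulk and boundary Dirichlet forms of $n^2L_n^m$ by Cauchy--Schwarz with tuned weights closes the argument, with the choice of $c$ compensating the Cauchy--Schwarz constants.

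\textbf{Main obstacle.} The delicate step is the eigenvalue/Dirichlet form estimate in the last display, and in particular the requirement that $c$ be chosen independent of $\kappa$. In the bulk this is a direct extension, to general $m$, of the computation carried out in \cite{BPGN} for $m=2$. At the boundary the difficulty is that the Glauber rate $\kappa/n^\theta$ is small, so the boundary Dirichlet form is tiny; the quantity to be absorbed, namely the weighted $H^2_s(0)$ and $H^2_s(1)$ contribution to $\dl H,H\dr_{\kappa,\rho^\kappa}^{\alpha,\beta}$, carries a compensating $1/\kappa$ factor, and the matching requires Cauchy--Schwarz with the specific weight $P_m^\gamma(\xi_s(\cdot))$. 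This is precisely why the measure $W^{\alpha,\beta}_{\kappa,a,b}$ of Definition 2.4 was tailored with these weights. Beyond this, unlike \cite{BPGN} we must handle test functions $H$ that do not vanish at the boundary, so the boundary contributions in the martingale decomposition cannot simply be discarded and have to be carried through and cancelled against the Glauber term using the Robin boundary condition \eqref{eq:Robin_bc}.
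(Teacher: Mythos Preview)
Your overall architecture (reduce to finitely many test functions, pass from $\mathbb Q$ to $\mathbb Q_n$, entropy inequality, Feynman--Kac, variational/Dirichlet form bound) matches the paper. However, there are two genuine gaps in the execution.

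\textbf{Circularity.} You twice invoke \eqref{eq:Robin_bc} to organize the boundary contributions (to ``regroup into $\mathcal T_{\rho^\kappa,m}^{\alpha,\beta}(H)$'' and to ``cancel against the Glauber term''). But \eqref{eq:Robin_bc} is a \emph{consequence} of Theorem \ref{energy estimate consequence}, whose proof rests on the very proposition you are proving. In the paper the boundary terms $\alpha^m H_s(0)-\beta^m H_s(1)$ are already part of the definition of $\mathcal T^{\alpha,\beta}_{\rho^\kappa,m}$, and at the microscopic level the summation by parts produces $-H_s(0)\int \tau_1 h^m(\eta)f\,d\nu$; one then uses the purely algebraic identity $\tau_1 h^m(\eta)-\alpha^m=(\eta(1)-\alpha)\mathcal R_m^\alpha(\eta)$ and bounds $(\eta(1)-\alpha)\mathcal R_m^\alpha(\eta)$ against the \emph{boundary Dirichlet form} $F_1^\alpha$ via Young's inequality. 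This is where the $\frac{1}{\kappa}P_m^\alpha(\cdot)H_s(0)^2$ weight appears. No Robin boundary condition is used or needed.

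\textbf{Origin of the quadratic penalty.} You say ``the quadratic variation of the martingale supplies the negative quadratic term $c\dl H,H\dr_{\kappa,\rho^\kappa}^{\alpha,\beta}$''. This is not how it works here. The quadratic term is part of the definition of $\mathcal E_{m,\kappa,c}^{\alpha,\beta}$ from the outset; it is carried through the $\mathbb Q\to\mathbb Q_n$ passage intact and the constant $c$ is chosen only at the very end (in the paper, $c=m+\tilde C(m,\alpha,\beta)$) so that the Young-inequality errors from the Dirichlet form bound are absorbed by $-c\int_0^1 H_s^2\,du$ (bulk) and $-\tfrac{c}{\kappa}\int \mathcal R_m^\alpha(\eta)f\,d\nu\, H_s(0)^2$ (boundary). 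The Dynkin exponential martingale does not enter.

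A smaller but real omission: to go from $\mathbb E_{\mathbb Q}$ back to the particle system you must write $(\rho_s^\kappa)^m(u)$ and $P_m^\gamma(\rho_s^\kappa(0))$ as limits of functionals of the empirical measure $\pi^n_s$. Since these are nonlinear in $\rho^\kappa$, this requires the approximations of identity $\overleftarrow\iota_\varepsilon^u,\overrightarrow\iota_\varepsilon^u$, Lemma \ref{L_bound} at the boundary, and then the replacement lemmas (Theorems \ref{replacement bulk} and \ref{replacement boundary4}) to convert products of box averages into $\tau_x h^m(\eta)$ and $\mathcal R_m^\alpha(\eta)$. Your sketch skips this step entirely, but without it the map $\Phi$ is not a functional of $\pi^n$ and the lower semi-continuity argument does not apply.
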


\begin{remark}\label{energy estimate compact}
 If we restrict the supremum in the definiton of $\mathcal E_{m,\kappa,c}^{\alpha, \beta}$, see \eqref{energy_functional}, to functions $H\in C_{c}^{0,1}([0,T]\times(0,1))$, the statement of Proposition \ref{prop:energy_estimate} reduces to 
	$$	\mathbb{E}_{\bb Q}\Big[ \sup _{H\in C_c^{0,1}([0,T]\times(0,1))} \Big\{ \dl (\rho^{\kappa})^m, \partial_u H \dr - c\dl H,H \dr \Big\} \Big]\,\leq \, M_0 < \infty,$$
which is exactly the energy estimate (for $m=2$) stated in Proposition $6.1$ of \cite{BPGN}.
\end{remark}

\subsubsection{Proof of Proposition \ref{prop:energy_estimate}}
In order to prove the proposition we observe that in Proposition $6.1$ of \cite{BPGN} the supremum was restricted to functions $H\in C_{c}^{0,1}([0,T]\times(0,1))$. In our case now, we have to consider functions $H\in C^{0,1}([0,T]\times[0,1])$ in order to have information about the boundary behavior of $\rho^\kappa$. 
To make the article self-contained, we go quickly over the proof and sketch the arguments for the case related to the bulk, and then we explain carefully how to proceed with the boundary. 

	For simplicity of the presentation, we will present the proof for even $m$ since the proof for $m$ odd is analogous and, when necessary, we explain the changes for the case $m$ odd. We begin by noticing that the space $C^{0,1}\left([0,T]\times[0,1]\right)$ is separable with respect to the norm $\|H\|_{\infty} + \|\partial_u H\|_{\infty}$. Thus, it is enough to restrict the supremum inside the expectation in the statement of the proposition to functions $H$ on a countable dense subset $\lbrace  H^{q}\rbrace_{q \in \mathbb{N}}$ of $C^{0,1}([0,T]\times[0,1])$. In addition, since $$\max_{l \leq q}\left\{ \mathcal{T}^{\alpha,\beta}_{\rho^\kappa,m}(H^l)-c\dl H^l,H^l \dr^{\alpha,\beta}_{\kappa,\rho^\kappa}  \right\} \;\; \uparrow \sup_{H\in C^{0,1}([0,T]\times[0,1])} \left\{ \mathcal{T}^{\alpha,\beta}_{\rho^\kappa,m}(H)-c \dl H,H \dr^{\alpha,\beta}_{\kappa,\rho^\kappa} \right\},$$
as $l \to \infty$, then by Monotone Convergence Theorem 
	$$\mathbb{E}_{\mathbb{Q}}\left[ \max_{l \leq q}\left\{ \mathcal{T}^{\alpha,\beta}_{\rho^\kappa,m}(H^l)-c\dl H^l,H^l \dr^{\alpha,\beta}_{\kappa,\rho^\kappa}  \right\}\right] \rightarrow \mathbb{E}_{\mathbb{Q}}\left[\sup_{H\in C^{0,1}([0,T]\times[0,1])} \left\{ \mathcal{T}^{\alpha,\beta}_{\rho^\kappa,m}(H)-c \dl H,H \dr^{\alpha,\beta}_{\kappa,\rho^\kappa} \right\}\right],$$
	when $q \to \infty$. Therefore, we are left to show that 
	
	\begin{equation}\label{estimate2}
	\mathbb{E}_{\mathbb{Q}}\left[ \max_{l \leq q}\left\{ \mathcal{T}^{\alpha,\beta}_{\rho^\kappa,m}(H^l)-c\dl H^l,H^l \dr^{\alpha,\beta}_{\kappa,\rho^\kappa} \right\} \right] \leq M_{0},
	\end{equation} 
	for any $q$ and for some $M_{0}$ independent from $q$ and $\kappa$. From \eqref{weighted norm} and \eqref{functional}, we have   
		\begin{equation}\label{term0}
		\begin{split}
		\mathcal{T}^{\alpha,\beta}_{\rho^\kappa,m}(H^l)-c\dl H^l,H^l &\dr^{\alpha,\beta}_{\kappa,\rho^\kappa}=\int_0^T \Bigg\{ \int_0^{1} \left(\rho_{s}^{\kappa}\right)^{m}(u) \;\p_u H^l_s(u) \,\,du +\alpha^mH^{l}_s(0)-\beta^mH^{l}_s(1) \\
		-\,&c\int_{0}^{1}(H^l_s(u))^2du\, -\, \dfrac{c}{\kappa}P^{\alpha}_{m}\left(\rho_{s}^{\kappa}(0)\right)(H^{l}_s(0))^2 \,-\, \dfrac{c}{\kappa}P^{\beta}_{m}\left(\rho^{\kappa}_{s}(1) \right)(H_s^{l}(1))^{2}\Bigg\}\,ds.
		\end{split}
		\end{equation}

Now we consider two approximations of the identity, for fixed $u\in[0,1]$, which are given on $v\in[0,1]$ by
	$\overleftarrow{\iota}^{u}_{\varepsilon}(v)= \dfrac{1}{\varepsilon}1_{(u-\varepsilon,u]}(v) $ and $\overrightarrow{\iota}^{u}_{\varepsilon}(v)= \dfrac{1}{\varepsilon}1_{[u,u+\varepsilon)}(v).$
We use the notation
	\begin{equation}\label{approx identity}
	\langle \rho_{s}^\kappa,  \overleftarrow{\iota}^{u}_{\varepsilon}\rangle = \dfrac{1}{\varepsilon}\int_{u-\varepsilon}^{u}\rho_{s}^\kappa(v)\,dv \;\;\; \text{and} \;\;\; \langle \rho_{s}^\kappa,  \overrightarrow{\iota}^{u}_{\varepsilon}\rangle = \dfrac{1}{\varepsilon}\int_{u}^{u+\varepsilon}\rho_{s}^\kappa(v)\,dv. 
	\end{equation}

	Since  $0\leq \rho_s^\kappa(\cdot) \leq 1$ and Lebesgue  differentiation Theorem,  
	\begin{equation}\label{lebesgue diff}
	  \lim_{\varepsilon \to 0}|\rho_{s}^\kappa(u)- \langle \rho_{s}^\kappa,  \overleftarrow{\iota}^{u-i\varepsilon}_{\varepsilon}\rangle | =0 \;\;\; \text{and} \;\;\; \lim_{\varepsilon \to 0}|\rho_{s}^\kappa(u)- \langle \rho_{s}^\kappa,  \overrightarrow{\iota}^{u+i\varepsilon}_{\varepsilon}\rangle |=0,
	\end{equation} for  almost every $u\in[0,1]$,
for any $i\in\{0,1,\ldots,m-1\}$. 
We can conclude, for $m$ even, that \begin{equation*}\label{term1}
	\left|\int_0^T \int_0^{1}(\rho_{s}^{\kappa})^{m}(u)\;\p_u H^l_s(u) \,\,du\,ds - \int_0^T \int_{\ve\frac {m}{ 2}}^{1-{\ve\frac {m}{ 2}}}\prod_{i=0}^{\tfrac{m}{2}-1}\< \rho_{s}^{\kappa},\overleftarrow{\iota_{\varepsilon}}^{u-i\varepsilon} \>\prod_{i=0}^{\tfrac{m}{2}-1}\< \rho_{s}^{\kappa},\overrightarrow{\iota}^{u+i\varepsilon}_{\varepsilon} \>\p_u H^l_s(u) \,\,du\,ds\right| 
	\end{equation*}
vanishes when $\ve \to 0$.  For $m$ odd, we would replace the previous display by 
\begin{equation*}\label{term1_new}
	\left|\int_0^T \int_0^{1}(\rho_{s}^{\kappa})^{m}(u)\;\p_u H^l_s(u) \,\,du\,ds - \int_0^T \int_{\ve\frac {m+1}{ 2}}^{1-{\ve\frac {m-1}{ 2}}}\prod_{i=0}^{\tfrac{m+1}{2}-1}\< \rho_{s}^{\kappa},\overleftarrow{\iota_{\varepsilon}}^{u-i\varepsilon} \>\prod_{i=0}^{\tfrac{m-1}{2}-1}\< \rho_{s}^{\kappa},\overrightarrow{\iota}^{u+i\varepsilon}_{\varepsilon} \>\p_u H^l_s(u) \,\,du\,ds\right|.
	\end{equation*} 
	Observe that in the last two displays we changed the function $(\rho_s^\kappa)^m$ by a choice of products of  the form \eqref{approx identity}, and despite not being the obvious change, it will be useful when we move to the microscopic system due to the result of Theorem \ref{replacement bulk}. 
For the terms with $P^\alpha_m(\cdot)$ and $P^\beta_m(\cdot)$ in \eqref{term0}, we need the result stated in \eqref{lebesgue diff} for the boundary points $u=0$ and $u=1$,  and this does not come for free from  \eqref{lebesgue diff}. In order to overcome this, we need to derive a stronger statement as in  Lemma \ref{L_bound}. We claim that, by successively applying Lemma \ref{L_bound}, the next display vanishes when $\ve \to 0$:
\begin{equation*}\label{term2}
\left|\int_{0}^{T} \dfrac{c}{\kappa}P_{m}^{\alpha}\left( \rho_{s}^{k}(0)\right)(H^{l}_s(0))^2\,ds - \int_{0}^{T}\dfrac{c}{\kappa}\sum_{i=0}^{m-1}\alpha^{m-1-i}\prod_{j=0}^{i-1}\langle \rho^\kappa _{s},  \overrightarrow{\iota}^{j\varepsilon}_{\varepsilon}\rangle (H^{l}_s(0))^2\, ds\right|
\end{equation*} 
Observe that by the definition of $P^\alpha_m(\cdot)$ given in \eqref{p}, the product above is understood as being equal to one for $i=0$. The other term with $P^\beta_m(\cdot)$ in \eqref{term0} is similar.

To prove the claim it is enough to note that last display can be bounded from above by a constant times
\begin{equation*}
\int_{0}^{T} \dfrac{c}{\kappa}\sum_{i=0}^{m-1}\alpha^{m-1-i}\left|(\rho^\kappa_s(0))^i-\prod_{j=0}^{i-1}\langle \rho^\kappa _{s},  \overrightarrow{\iota}^{j\varepsilon}_{\varepsilon}\rangle\right|\, ds.
\end{equation*} 
By summing and subtracting proper terms of the form $\rho^\kappa_s(0))^p\prod_{j=0}^{i-1-p}\langle \rho^\kappa _{s},  \overrightarrow{\iota}^{j\varepsilon}_{\varepsilon}\rangle$, for $p\in\{1,\cdots, i-1\}$, which are all bounded from above by a constant, the last display is bounded from above by  a constant times
\begin{equation*}
\int_{0}^{T} \sum_{j=0}^{m-1}\left|\rho^\kappa_s(0)-\langle \rho^\kappa _{s},  \overrightarrow{\iota}^{j\varepsilon}_{\varepsilon}\rangle\right|\, ds,
\end{equation*} 
and this vanishes by Lemma \ref{L_bound}, as $\ve\to 0$.

To treat the boundary term at the right-hand side of  \eqref{term0}, we can do exactly the same argument as we did to control the left boundary term. For simplicity of the presentation, we just present the arguments for the left boundary, but for the right, it is completely analogous. From here on, we neglect all the contributions from the right boundary. 

From previous results, we are left to prove that there exist constants $c>0$ and $M_0>0$ such that
\begin{equation}\label{a_provar_1}
\begin{split}
\varlimsup_{\ve\to 0}\mathbb{E}_{\bb Q}\Bigg[ \max _{ l\leq q} \Bigg\{\int_0^T \Bigg( \int_{\ve\frac {m}{ 2}}^{1-{\ve\frac {m}{ 2}}}\prod_{i=0}^{\tfrac{m}{2}-1}&\< \rho_{s}^{\kappa},\overleftarrow{\iota_{\varepsilon}}^{u-i\varepsilon} \>\prod_{i=0}^{\tfrac{m}{2}-1}\< \rho_{s}^{\kappa},\overrightarrow{\iota}^{u+i\varepsilon}_{\varepsilon} \>\p_u H^l_s(u) \,\,du +\alpha^mH^{l}_s(0) \\
-c&\int_{0}^{1}(H^l_s(u))^2du - \dfrac{c}{\kappa}\sum_{i=0}^{m-1}\alpha^{m-1-i}\prod_{j=0}^{i-1}\langle \rho^\kappa _{s},  \overrightarrow{\iota}^{j\varepsilon}_{\varepsilon}\rangle(H^{l}_s(0))^2\Bigg) ds\Bigg\}\Bigg]  \, \leq \, M_{0}.
\end{split}
\end{equation}
Now, we define the application $\Phi: \mathcal{D}\left([0,T],\mathcal{M}_{+}\right) \to \mathbb{R}$ by
\begin{equation*}\label{a_provar_2}
\begin{split}
\Phi(\pi_{\cdot}) = \max _{ l\leq q} \Bigg\{ \int_0^T \Bigg(\int_{\ve\frac {m}{ 2}}^{1-{\ve\frac {m}{ 2}}}&\
\prod_{i=0}^{\tfrac{m}{2}-1}\< \pi_{s},\overleftarrow{\iota_{\varepsilon}}^{u-i\varepsilon} \>\prod_{i=0}^{\tfrac{m}{2}-1}\< \pi_{s},\overrightarrow{\iota}^{u+i\varepsilon}_{\varepsilon} \>
\;\p_u H^l_s(u) \,\,du +\alpha^mH^{l}_s(0)\\
-c&\int_{0}^{1}(H^l_s(u))^{2}du - \dfrac{c}{\kappa}\sum_{i=0}^{m-1}\alpha^{m-1-i}\prod_{j=0}^{i-1}\langle \pi_{s},  \overrightarrow{\iota}^{j\varepsilon}_{\varepsilon}\rangle(H_s^{l}(0))^2 \Bigg)\,ds\Bigg\}.
\end{split}
\end{equation*}

The function $\Phi$  is  lower semi-continuous and bounded with respect to the Skorokhod topology of $ \mc D([0,T], \mc M_+)$. Therefore, recalling that $\mathbb{E}_{n}$ is the expectation with respect to the measure $\mathbb Q_n$, we can bound the expectation in \eqref{a_provar_1} from above by
\begin{equation*}\label{a_provar_3}
\begin{split}
\varliminf_{n\rightarrow +\infty}\mathbb{E}_{n}\Bigg[ \max _{l \leq q} &\Bigg\{ \int_0^T \Bigg(\int_{\ve\frac {m}{ 2}}^{1-{\ve\frac {m}{ 2}}}\prod_{i=0}^{\tfrac{m}{2}-1}\< \pi^n_s,\overleftarrow{\iota_{\varepsilon}}^{u-i\varepsilon} \>\prod_{i=0}^{\tfrac{m}{2}-1}\< \pi^n_s,\overrightarrow{\iota}^{u+i\varepsilon}_{\varepsilon} \>\;\p_u H^l_s(u) \,\,du +\alpha^mH^{l}_s(0) \\
-c&\int_{0}^{1}(H^l_s(u))^2du - \dfrac{c}{\kappa}\sum_{i=0}^{m-1}\alpha^{m-1-i}\prod_{j=0}^{i-1}\langle \pi^n _{s},  \overrightarrow{\iota}^{j\varepsilon}_{\varepsilon}\rangle(H_s^{l}(0))^2 \Bigg)\,ds\Bigg\}\Bigg].
\end{split}
\end{equation*}

Now we want to compare this expression to its  analogue at the microscopic level. To that end, fix $n\in \mathbb{N}$, $x\in \Sigma_{n}$, and $\varepsilon>0$. Let $$\Sigma_{n,m}^{\varepsilon} = \{1+\tfrac{m}{2}\varepsilon n, \ldots, n-1-\tfrac{m}{2}\varepsilon n\}.$$ For $m$ even we consider $\Sigma_{n,m}^{\varepsilon}$,  while for $m $ odd, we  would  consider  $\Sigma_{n,m}^{\varepsilon}$  given by $$\Sigma_{n,m}^{\varepsilon} = \{1+\tfrac{m+1}{2}\varepsilon n, \ldots, n-1-\tfrac{m-1}{2}\varepsilon n\},$$
see Figure \ref{figure-set}. Above $\varepsilon n$ denotes $\lfloor \varepsilon n \rfloor$.
\begin{figure*}[h!]
	\begin{center}
		\begin{tikzpicture}[thick, scale=0.75][h!]
		\draw [line width=0.6] (-9,10.5) -- (9,10.5) ;
		\draw [line width=1.4] (-6,10.5) -- (6,10.5) ;
		\foreach \x in  {-9,-6,-5,-4,-3,-2,-1,0,1,2,3,4,5,6,9} 
		\draw[shift={(\x,10.5)},color=black, opacity=1.3] (0pt,0pt) -- (0pt,-4pt) node[below] {};
		\draw[] (-2.8,10.5) node[] {};
		
		\draw[] (-9,10.34) node[below] {\tiny{$1$}};
		\draw[] (-7.5,10.34) node[below] {\tiny{$\cdots$}};
		\draw[] (-6,10.34) node[below] {\tiny{$1+\tfrac{m}{2}\varepsilon n$}};
		\draw[] (5.8,10.34) node[below] {\tiny{$n-1-\tfrac{m}{2}\varepsilon n$}};
		\draw[] (7.5,10.34) node[below] {\tiny{$\cdots$}};
		\draw[] (9,10.34) node[below] {\tiny{$n-1$}};

		
		\shade[shading=ball, ball color=black!50!] (-2,10.76) circle (.245);
		\shade[shading=ball, ball color=black!50!] (-1,10.76) circle (.245);
		\shade[shading=ball, ball color=black!50!] (1,10.76) circle (.245);
		\shade[shading=ball, ball color=black!50!] (3,10.76) circle (.245);
		\end{tikzpicture} 
	\end{center}
\end{figure*}
\vspace*{-1cm}
\begin{figure}[h!]
	\begin{center}
		\begin{tikzpicture}[thick, scale=0.75][h!]
		\draw [line width=0.6] (-9,10.5) -- (9,10.5) ;
		\draw [line width=1.4] (-5,10.5) -- (7,10.5) ;
		\foreach \x in  {-9,-5,-4,-3,-2,-1,0,1,2,3,4,5,6,7,9} 
		\draw[shift={(\x,10.5)},color=black, opacity=1.3] (0pt,0pt) -- (0pt,-4pt) node[below] {};
		\draw[] (-2.8,10.5) node[] {};
		
		\draw[] (-9,10.34) node[below] {\tiny{$1$}};
		\draw[] (-7,10.34) node[below] {\tiny{$\cdots$}};
		\draw[] (-5,10.34) node[below] {\tiny{$1+\tfrac{m+1}{2}\varepsilon n$}};
		\draw[] (6.8,10.34) node[below] {\tiny{$n-1-\tfrac{m-1}{2}\varepsilon n$}};
		\draw[] (8.2,10.34) node[below] {\tiny{$\cdots$}};
		\draw[] (9,10.34) node[below] {\tiny{$n-1$}};

		
		\shade[shading=ball, ball color=black!50!] (-2,10.76) circle (.245);
		\shade[shading=ball, ball color=black!50!] (-1,10.76) circle (.245);
		\shade[shading=ball, ball color=black!50!] (1,10.76) circle (.245);
		\shade[shading=ball, ball color=black!50!] (3,10.76) circle (.245);
		\end{tikzpicture} 
	\end{center}
	\caption{The set $\Sigma^{\varepsilon}_{n,m}$ for $m$ even and for $m$ odd, respectively.}
	\label{figure-set}
\end{figure}
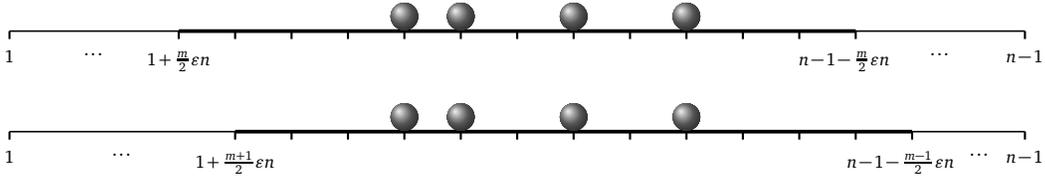

By changing the space integral by its Riemann sum, we can rewrite last display as 
\begin{equation}\label{a_provar_new}
\begin{split}
\varliminf_{n\rightarrow +\infty}\mathbb{E}_{n}\Bigg[ \max _{l \leq q} &\Bigg\{ \int_0^T \Bigg(\frac 1n \sum_{x\in\Sigma_{n,m}^\varepsilon}\prod_{i=0}^{\tfrac{m}{2}-1}\< \pi^n_s,\overleftarrow{\iota_{\varepsilon}}^{x/n-i\varepsilon} \>\prod_{i=0}^{\tfrac{m}{2}-1}\< \pi^n_s,\overrightarrow{\iota}^{x/n+i\varepsilon}_{\varepsilon} \>\;\p_u H^l_s(\tfrac xn)  +\alpha^mH^{l}_s(0) \\
-c&\int_{0}^{1}(H^l_s(u))^2du - \dfrac{c}{\kappa}\sum_{i=0}^{m-1}\alpha^{m-1-i}\prod_{j=0}^{i-1}\langle \pi^n _{s},  \overrightarrow{\iota}^{j\varepsilon}_{\varepsilon}\rangle(H_s^{l}(0))^2 \Bigg)\,ds\Bigg\}\Bigg].
\end{split}
\end{equation}

Now, let 
$
\overleftarrow{\Lambda}^{\ell}_{x} := \{x-\ell+1, \ldots,x\}$ and $ \overrightarrow{\Lambda}^{\ell}_{x} := \{x,\ldots,x+\ell-1\},
$
be the boxes of size $\ell$ to the left and to the right of site $x$, respectively.  We denote by
\begin{equation}\label{empirical densities}
\overleftarrow{\eta}^{\ell}(x) = \frac{1}{\ell} \sum_{y \in \overleftarrow{\Lambda}^{\ell}_{x}} \eta(y) \;\;\; \text{and} \;\;\; \overrightarrow{\eta}^{\ell}(x) = \frac{1}{\ell} \sum_{y \in \overrightarrow{\Lambda}^{\ell}_{x}} \eta(y)
\end{equation}
the empirical densities in the boxes $\overleftarrow{\Lambda}^{\ell}_{x}$ and $\overrightarrow{\Lambda}^{\ell}_{x}$. 

Now we explain why we will need to  introduce the subset $\Sigma_{n,m}^{\varepsilon}$ of the bulk $\Sigma_{n}$. We use this set since, for each $x\in \Sigma_{n,m}^{\varepsilon}$ we  will need to replace the occupation at site $x$ by its average to the left or right of $x$ on a box of size $\varepsilon n$, and we are allowed to do so  for $x\in\Sigma_{n,m}^{\varepsilon}$ but not for $x$ on the whole bulk. 

From \eqref{approx identity} and \eqref{empirical densities}, we have that
\begin{equation*}\label{disc_1}\< \pi^n_s, \overleftarrow\iota^{x/n-i\varepsilon}_\ve\>=\overleftarrow{\eta}^{\ve n}_{sn^2}(x-i\varepsilon n), \quad \< \pi^n_s, \overrightarrow\iota^{x/n+i\varepsilon }_\ve\>=\overrightarrow{\eta}^{\ve n}_{sn^2}(x+1+i\varepsilon n )+O(\tfrac {1}{\varepsilon n}),	
\end{equation*}
and 
\begin{equation*}\label{disc_new} \< \pi^n_s, \overrightarrow\iota^{j\varepsilon }_\ve\>=\overrightarrow{\eta}^{\ve n}_{sn^2}(2+j\varepsilon n )+O(\tfrac{1}{\ve n}),	
\end{equation*} 
for $i=0,\ldots, \frac m2-1$ and $j=0,\ldots, i-1$. Then		
we can rewrite the expectation, now with respect to $\mathbb P_{\mu_n}$,  in \eqref{a_provar_new} as

\begin{equation*}
\begin{split}
\mathbb{E}_{\mu_n}  \Bigg[ \max _{l \leq q} \Bigg\{ \int_0^T \Bigg(\dfrac{1}{n}&\sum_{x\in\Sigma^{\varepsilon}_{n,m}}\prod_{i=0}^{\tfrac{m}{2}-1}\overleftarrow{\eta}^{\varepsilon n}_{sn^2}(x-i\varepsilon n )\prod_{i=0}^{\tfrac{m}{2}-1}\overrightarrow{\eta}_{sn^2}^{\ve n}(x+1+i\varepsilon n  )\;\p_u H^l_s(\pfrac{x}{n}) +\alpha^mH^{l}_s(0) \\
-c&\int_{0}^{1}(H^l_s(u))^2du - \dfrac{c}{\kappa} \sum_{i=0}^{m-1}\alpha^{m-1-i}\prod_{j=0}^{i-1}\overrightarrow{\eta}_{sn^2}^{\ve n}(2+j\varepsilon n )(H_s^{l}(0))^2\Bigg)\,ds\Bigg\}\Bigg]
\end{split}
\end{equation*}
plus terms that vanish as $n\to \infty$. Now we state two useful results whose proofs can be found in Section 5 of \cite{BPGN} for the case $m=2$. Before recall the definition of $\tau_{x}h^{m}$ given in \eqref{shift}.
\begin{theorem}\label{replacement bulk}
	Let $H:[0,T]\times[0,1]\to \mathbb{R}$ be such that $\| H\|_{\infty}<\infty$. For any $t\in [0,T]$, we have that
	\begin{equation*}
	\varlimsup_{\varepsilon\to 0} \varliminf_{n\to \infty} \mathbb{E}_{\mu_n}\left[\left|\int_{0}^{t} \dfrac{1}{n}\sum_{x\in\Sigma_{n,m}^{\varepsilon}}H_s\left(\tfrac{x}{n}\right)\Bigg\{\tau_x h^{m}(\eta_{sn^2})- \prod_{i=0}^{\tfrac{m}{2}-1}\overleftarrow{\eta}^{\varepsilon n}_{sn^2}(x-i\varepsilon n )\overrightarrow{\eta}_{sn^2}^{\ve n}(x+1+i\varepsilon n )\Bigg\}\,ds \right| \right] = 0.
	\end{equation*}
\end{theorem}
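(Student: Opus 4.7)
The plan is to establish the theorem by a Guo--Papanicolaou--Varadhan type replacement lemma, adapting the argument of \cite{BPGN} (where it was carried out for the case $m=2$) to arbitrary $m\in\mathbb N$. I would first apply the entropy inequality with respect to a reference Bernoulli product measure $\nu_\ast^n$ of density $\rho_\ast$ in the interior of $(0,1)$ to bound the expectation in the statement by
$$\frac{H(\mu_n\mid\nu_\ast^n)}{\gamma n}+\frac{1}{\gamma n}\log\mathbb E_{\nu_\ast^n}\Big[\exp\Big(\gamma n\Big|\int_0^t V_{\varepsilon,n}(\eta_{sn^2})\,ds\Big|\Big)\Big],$$
for every $\gamma>0$, where $V_{\varepsilon,n}$ denotes the spatial sum appearing in the statement. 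Using $e^{|x|}\leq e^x+e^{-x}$, Jensen's inequality and the Feynman--Kac formula, the problem reduces to a variational bound of the form
$$\sup_f\Big\{\int V_{\varepsilon,n}(\eta)\,f(\eta)\,d\nu_\ast^n-\frac{n}{\gamma}\big\langle -L_n^m\sqrt f,\sqrt f\big\rangle_{\nu_\ast^n}\Big\},$$
with the supremum taken over densities $f$ with respect to $\nu_\ast^n$. The crucial point is that the Dirichlet form contains a contribution of order $n^{a-2}$ coming from the SSEP perturbation; after the diffusive rescaling this produces an $n^a$ coercive term (with $a>1$) which supplies the smoothing needed for the subsequent local-averaging arguments (the bare PMM dynamics, because of its blocked configurations, would not suffice).

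Next, I would perform a one-block estimate: for a mesoscopic scale $\ell=\ell_n$ with $1\ll\ell\ll\varepsilon n$, I would replace $\tau_x h^m(\eta)$ by a function of the empirical density $\eta^\ell(x)$ in a box of size $\ell$ around $x$. The key algebraic identity is $\mathbb E_{\nu_\rho^n}[\tau_x h^m]=\rho^m$, which follows at once from \eqref{shift}: the first sum contributes $m$ monomials each of mean $\rho^m$ and the second sum contributes $m-1$ such monomials, so that $m\rho^m-(m-1)\rho^m=\rho^m$. Combined with equivalence of ensembles on the canonical measure conditioned on the local particle number, this replaces $\tau_xh^m(\eta)$ by $(\eta^\ell(x))^m$ modulo errors that are absorbed by the SSEP part of the Dirichlet form.

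Then I would carry out a two-blocks estimate, replacing $(\eta^\ell(x))^m$ by the prescribed product
$$\prod_{i=0}^{\tfrac{m}{2}-1}\overleftarrow{\eta}^{\varepsilon n}(x-i\varepsilon n)\prod_{i=0}^{\tfrac{m}{2}-1}\overrightarrow{\eta}^{\varepsilon n}(x+1+i\varepsilon n)$$
(and the analogous product when $m$ is odd). Each factor averages $\eta$ over a box of size $\varepsilon n$, and the $m$ boxes are pairwise disjoint; invoking the SSEP Dirichlet form once more together with the asymptotic independence of empirical averages over disjoint boxes under $\nu_\rho^n$, one shows that the difference between $(\eta^\ell(x))^m$ and this product concentrates at zero uniformly in $x\in\Sigma_{n,m}^\varepsilon$. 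The restriction to $\Sigma_{n,m}^\varepsilon$ is precisely what ensures that all these boxes fit inside the bulk $\Sigma_n$.

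The main obstacle I foresee is the combinatorial bookkeeping for general $m$: the proof in \cite{BPGN} takes advantage of the simple product $\eta(x)\eta(x+1)$ at $m=2$, whereas for arbitrary $m$ one must track the $m$-fold telescoping in \eqref{shift} and reorganize it so that it matches the left/right splitting into $m$ equal-sized boxes. Once this combinatorial identification is completed, the coercive $n^a$ term in the Dirichlet form, with $a\in(1,2)$, is strong enough to absorb both the one-block and two-blocks errors, so that the bound vanishes after taking $n\to\infty$ and then $\varepsilon\to0$, yielding the desired limit after a final multiplication by $\|H\|_\infty$ and the Riemann normalization $\tfrac{1}{n}\sum_{x\in\Sigma_{n,m}^\varepsilon}$.
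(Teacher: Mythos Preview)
Your proposal is sound and would yield the result, but it follows a different route from the one the paper sketches (via \cite{BPGN}). You run the classical Guo--Papanicolaou--Varadhan scheme: a one-block step with equivalence of ensembles to pass from $\tau_x h^m(\eta)$ to $(\eta^\ell(x))^m$, followed by a two-blocks step to reach the product of $m$ disjoint box averages. The paper never introduces the intermediate object $(\eta^\ell(x))^m$. Instead it writes $\tau_x h^m(\eta)$ explicitly as a signed combination of products $\prod_j\eta(x+j)$ of $m$ occupation variables at neighbouring sites, then for each such product first \emph{shifts} some of the factors by multiples of $\varepsilon n$ (e.g.\ replaces $\eta(x)$ by $\eta(x-\varepsilon n)$, as in Figure~\ref{figure-box}) so that the $m$ sites become $\varepsilon n$-separated, and finally replaces each single occupation variable by the one-sided box average of size $\varepsilon n$ sitting next to it. Both the shifting and the averaging are controlled directly by the SSEP part of the Dirichlet form through moving-particle lemmas (Lemmas~5.11, 5.13--5.15 of \cite{BPGN}), with no appeal to equivalence of ensembles or to the canonical measure. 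The paper's approach is therefore more hands-on and lighter in machinery, at the cost of exactly the factor-by-factor combinatorial bookkeeping you anticipated; your approach is more modular and closer to the textbook GPV presentation, but imports the one-block/equivalence-of-ensembles apparatus, which is standard for Bernoulli reference measures but not actually needed here.
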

The previous result can be proved by following the arguments as described in Theorem 5.10 and Lemmas 5.11,5.13, 5.14 and 5.15 of \cite{BPGN}. We observe that the proof in \cite {BPGN} is given for $m=2$, but for  $m>2$ it is completely analogous.  The only difference is the fact that, for example, for $m=3$, the function $\tau_x h^m(\eta)$ contains terms of the form $\eta(x)\eta(x+1)\eta(x+2)$. In order to replace them by products of averages, one first has to replace  in the product above $\eta(x)$ by $\eta(x-\varepsilon n)$, which can be done by adapting the arguments of  Lemma 5.11 of \cite{BPGN},   and then follow the proof for the case $m=2$, see Figure \ref{figure-box}. 
We do not present the proof of these arguments since they are very similar to those of \cite{BPGN}.

\begin{figure*}[h!]
	\begin{center}
		\begin{tikzpicture}[thick, scale=0.75]
		\draw [line width=0.8] (-8,10.5) -- (8,10.5) ; 
		\foreach \x in  {-7,-6,-5,-4,-3,-2,-1,0,1,2,3,4,5,6,7} 
		\draw[shift={(\x,10.5)},color=black, opacity=1] (0pt,0pt) -- (0pt,-4pt) node[below] {};
		\draw[] (-2.8,10.5) node[] {};
		
		\draw[] (0,10.34) node[below] {\tiny{$x$}};
		\draw[] (1,10.4) node[below] {\tiny{$x+1$}};
		\draw[] (2,10.4) node[below] {\tiny{$x+2$}};
		
		
		\shade[shading=ball, ball color=black!50!] (0,10.76) circle (.245);
		\shade[shading=ball, ball color=black!50!] (1,10.76) circle (.245);
		\shade[shading=ball, ball color=black!50!] (2,10.76) circle (.245);
		
		\end{tikzpicture} 
	\end{center}
\end{figure*}
\vspace{-0.8cm}
\begin{figure*}[h!]
	\begin{center}
		\begin{tikzpicture}[thick, scale=0.75]
		\draw [line width=0.8] (-8,10.5) -- (8,10.5) ; 
		\foreach \x in  {-7,-6,-5,-4,-3,-2,-1,0,1,2,3,4,5,6,7}
		\draw[shift={(\x,10.5)},color=black, opacity=1] (0pt,0pt) -- (0pt,-4pt) node[below] {};
		\draw[] (-2.8,10.5) node[] {};
		
		\draw[] (-3,10.4) node[below] {\tiny{$x-\varepsilon n$}};
		\draw[] (0,10.34) node[below] {\tiny{$x$}};
		\draw[] (1,10.4) node[below] {\tiny{$x+1$}};
		\draw[] (2,10.4) node[below] {\tiny{$x+2$}};
		
		
		\shade[shading=ball, ball color=black!50!] (-3,10.76) circle (.245);
		\shade[shading=ball, ball color=black!50!] (1,10.76) circle (.245);
		\shade[shading=ball, ball color=black!50!] (2,10.76) circle (.245);
		\end{tikzpicture} 
	\end{center}
\end{figure*}
\vspace{-1cm}
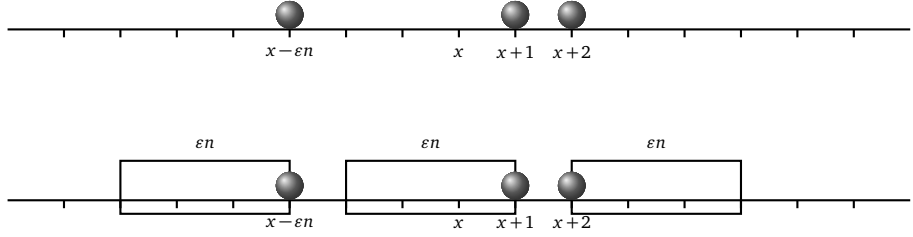
\begin{figure}[h!]
	\begin{center}
		\begin{tikzpicture}[thick, scale=0.75]
		\draw [line width=0.8] (-8,10.5) -- (8,10.5) ; 
		\foreach \x in  {-7,-6,-5,-4,-3,-2,-1,0,1,2,3,4,5,6,7} 
		\draw[shift={(\x,10.5)},color=black, opacity=1] (0pt,0pt) -- (0pt,-4pt) node[below] {};
		\draw[] (-2.8,10.5) node[] {};
		
		\draw[] (-3,10.4) node[below] {\tiny{$x-\varepsilon n$}};
		\draw[] (0,10.34) node[below] {\tiny{$x$}};
		\draw[] (1,10.4) node[below] {\tiny{$x+1$}};
		\draw[] (2,10.4) node[below] {\tiny{$x+2$}};
		\draw[] (-4.5,11.25) node[above] {\tiny{$\varepsilon n$}};
		\draw[] (-0.5,11.25) node[above] {\tiny{$\varepsilon n$}};
		\draw[] (3.5,11.25) node[above] {\tiny{$\varepsilon n$}};
		
		\draw[thick] (-2, 10.26) rectangle (1, 11.2);
		\draw[thick] (-6, 10.26) rectangle (-3, 11.2);
		\draw[thick] (2, 10.26) rectangle (5, 11.2);
		
		\shade[shading=ball, ball color=black!50!] (-3,10.76) circle (.245);
		\shade[shading=ball, ball color=black!50!] (2,10.76) circle (.245);
		\shade[shading=ball, ball color=black!50!] (1,10.76) circle (.245);
		\end{tikzpicture} 
	\end{center}
	\caption{Replacing the occupation sites $x$, $x+1$, and $x+2$ by occupation averages on boxes of size $\varepsilon n$.}
	\label{figure-box}
\end{figure}

\begin{theorem}\label{replacement boundary4}
	For any $t\in[0,T]$ and any $i\in\{1,\ldots, m-1\}$, we have 
	\begin{equation*}
	\varlimsup_{\varepsilon\to 0} \varliminf_{n\to \infty} \mathbb{E}_{\mu_n}\left[\left| \int_{0}^{t}\Bigg\{\prod_{j=0}^{i-1}\overrightarrow{\eta}_{sn^2}^{\ve n}(2+j\varepsilon n )-\prod_{j=0}^{i-1}{\eta}_{sn^2}(2+j )\Bigg\}\, ds \right| \right] = 0.
	\end{equation*}
\end{theorem}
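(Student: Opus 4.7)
The plan is to reduce the replacement of products to a sum of single-factor replacements via a telescoping algebraic identity, and then to handle each single-factor estimate by the entropy method combined with the Dirichlet form of the SSEP perturbation (which, after the diffusive speed-up, acts at rate $n^{2}\cdot n^{a-2}=n^{a}$).

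First, since every factor of both products lies in $[0,1]$, the identity
$$\prod_{j=0}^{i-1}a_{j}-\prod_{j=0}^{i-1}b_{j}=\sum_{k=0}^{i-1}\Big(\prod_{j<k}a_{j}\Big)(a_{k}-b_{k})\Big(\prod_{j>k}b_{j}\Big)$$
yields the pointwise bound $\big|\prod_{j}a_{j}-\prod_{j}b_{j}\big|\leq\sum_{k}|a_{k}-b_{k}|$. Applying this with $a_{j}=\overrightarrow{\eta}_{sn^{2}}^{\varepsilon n}(2+j\varepsilon n)$ and $b_{j}=\eta_{sn^{2}}(2+j)$, passing the absolute value inside the time integral, and using the linearity of the expectation, the statement reduces to showing, for every fixed $k\in\{0,\ldots,i-1\}$,
$$\varlimsup_{\varepsilon\to 0}\varliminf_{n\to\infty}\mathbb{E}_{\mu_{n}}\Bigg[\int_{0}^{t}\big|\overrightarrow{\eta}_{sn^{2}}^{\varepsilon n}(2+k\varepsilon n)-\eta_{sn^{2}}(2+k)\big|\,ds\Bigg]=0.$$

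To settle this single-factor estimate, I would proceed as in the one-block and two-blocks arguments used for $m=2$ in Lemmas 5.11 and 5.15 of \cite{BPGN}. The difference can be written as an average of nearest-neighbour gradients,
$$\overrightarrow{\eta}^{\varepsilon n}(2+k\varepsilon n)-\eta(2+k)=\frac{1}{\varepsilon n}\sum_{y=0}^{\varepsilon n-1}\sum_{z=2+k}^{2+k\varepsilon n+y-1}\big[\eta(z+1)-\eta(z)\big],$$
whose time integral and expectation under $\mu_{n}$ are controlled by the Feynman--Kac inequality combined with the entropy bound $H(\mu_{n}\,|\,\nu^{n}_{\rho})\leq C_{0}n$ (see \eqref{ent} and the restriction $\alpha,\beta\in(0,1)$) together with the Dirichlet form of $\sqrt{f_{s}}$ for the accelerated SSEP generator $n^{a}L_{S}$. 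Since $a>1$, this yields an estimate vanishing with $n\to\infty$ for each fixed $\varepsilon>0$.

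The main technical difficulty, and what prevents a one-line proof, is the careful bookkeeping of the powers of $\varepsilon$ and $n$ that arise when summing the gradient contributions over the double telescope, so that the final bound is of positive order in $\varepsilon$ after letting $n\to\infty$. The argument is otherwise structurally identical to the case $m=2$ treated in \cite{BPGN}, so no new ideas are required for general $m$: one only needs to transcribe the same estimates and verify the exponents.
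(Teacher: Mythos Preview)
Your proposal is correct and follows essentially the same route as the paper. The paper itself does not give a detailed proof: it simply says that the result ``can be derived by applying the same arguments as in Theorem 5.10 of \cite{BPGN}, by taking into consideration that the product has $i-1$ factors.'' Your telescoping identity is precisely the device that reduces the multi-factor statement to the single-factor boundary replacement treated in \cite{BPGN}, and your sketch of the single-factor bound via the entropy inequality, Feynman--Kac, and the SSEP Dirichlet form (at effective rate $n^{a}$ with $a>1$) is exactly the content of the referenced lemmas there.
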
 
The last result can be derived by applying  the same arguments as in  Theorem 5.10  of \cite{BPGN}, by taking into consideration that the product has $i-1$ factors.  We leave the details to the reader. 
By putting together the two previous results, we are left to show that 
\begin{equation}\label{a_provar_5}
\begin{split}
\varlimsup_{\ve\to 0}\varliminf_{n\rightarrow +\infty}\mathbb{E}_{\mu_n} \Bigg[ \max _{l \leq q} \Bigg\{ \int_0^T &\Bigg(\dfrac{1}{n}\sum_{x=1}^{n-2}\p_u H^l_s(\pfrac{x}{n})\tau_xh^m(\eta_{sn^2}) +\alpha^mH^{l}_s(0) \\
-c&\int_{0}^{1}(H^l_s(u))^2du - \dfrac{c}{\kappa}\mathcal{R}^{\alpha}_{m}(\eta_{sn^2})(H_s^{l}(0))^2 \Bigg)\,ds\Bigg\}\Bigg] \leq M_0,
\end{split}
\end{equation}
where $\tau_xh^{m}(\eta)$ is defined in \eqref{shift} and \begin{equation}\label{eq:R_new}
\mathcal{R}^{\alpha}_{m}(\eta)=\sum_{i=0}^{m-1}\alpha^{m-1-i}\prod_{j=0}^{i-1}{\eta}(2+j ).\end{equation}

Observe that above we are back to the whole bulk since the replacement from $\Sigma^\varepsilon_{n,m} $ to $\Sigma_n$ vanishes  as $\varepsilon\to 0.$ Now we want to change the initial measure $\mu_{n}$ to a suitable measure,  here being  the Bernoulli product measure $\nu^{n}_{\rho(\cdot)}$,  with marginals  given by
$\nu^{n}_{\rho(\cdot)}\{\eta: \eta(x)=1\} = \rho\left(\tfrac{x}{n}\right),$
where $\rho(\cdot)$  is  a Lipschitz profile such that for all $u\in(0,1)$,
\begin{equation*}\label{eq:choice_profile}
\alpha = \rho(0) \leq \rho(u) \leq \rho(1)=\beta
\end{equation*}
and locally constant at the boundary.
We observe that if $H\left(\mu_n | \nu^{n}_{\rho(\cdot)}\right)$ denotes the relative entropy of $\mu_n$ with respect to $\nu^{n}_{\rho(\cdot)}$, then, a simple computation shows that, there exists a constant $C(\alpha,\beta)$, such that
\begin{equation}\label{ent}
H\left(\mu_n | \nu^{n}_{\rho(\cdot)}\right) \,\leq\, n\,C(\alpha,\beta).	
\end{equation}
We observe that to derive the inequality above we need to restrict $\alpha,\beta\in(0,1)$.
Therefore, by entropy's and Jensen's inequality and the fact that $\exp\left\{\max_{l\leq q} a_{l}\right\}\leq \sum_{l=1}^{q}\exp\{a_{l}\}$, the expectation in \eqref{a_provar_5} is bounded from above by 
\begin{equation}\label{eqqq}
\begin{split}
C(\alpha,\beta)+\dfrac{1}{n}\log \mathbb{E}_{\nu_{\rho(\cdot)}^n}\Bigg[ \sum_{l=1}^{q} \exp\Bigg\{ \int_{0}^{T} \Bigg( \sum_{x=1}^{n-2}\p_u &H^l_s(\pfrac{x}{n})\tau_xh^m(\eta_{sn^2}) +n\alpha^mH^{l}_s(0) \\
-n\,c&\int_{0}^{1}(H^l_s(u))^2du - n\dfrac{c}{\kappa}\mathcal{R}^{\alpha}_{m}(\eta_{sn^2})(H_s^{l}(0))^2  \Bigg) ds\Bigg\} \Bigg].
\end{split}
\end{equation}
From the identity $$\varlimsup_{n \to \infty} n^{-1}\log(a_n + b_n) = \max\left\{ \varlimsup_{n \to \infty} n^{-1}\log(a_n), \, \varlimsup_{n \to \infty} n^{-1}\log(b_n) \right\},$$ in order to estimate \eqref{eqqq}, it is enough to bound 
\begin{equation*}\label{bound1}
\begin{split}
\dfrac{1}{n}\log \mathbb{E}_{\nu_{\rho(\cdot)}^n}\Bigg[ \exp\Bigg\{ \int_{0}^{T} \Bigg(\sum_{x=1}^{n-2}\p_u H_s(\pfrac{x}{n})\tau_xh^m(\eta_{sn^2}) &- n\dfrac{c}{\kappa}\mathcal{R}^{\alpha}_{m}(\eta_{sn^2})\left(H_s(0)\right)^{2} \\
&-n\,c\int_{0}^{1}\left(H_s(u)\right)^{2}\,du+n\alpha^mH_s(0) \Bigg) ds\Bigg\} \Bigg],
\end{split}
\end{equation*}
for a fixed function $H\in C^{0,1}([0,T]\times[0,1])$.
Now, by  Feynman-Kac's formula (see, for example, Lemma A.1 of  \cite{bmns}), we can bound the previous display  from above by 
\begin{equation}\label{bound2} 
\begin{split}
\int _{0}^{T} & \sup _{f}\;\,\Bigg\{\dfrac{1}{n}\int_{\Omega_n} \sum_{x=1}^{n-2}\partial_uH_s(\pfrac{x}{n})\tau_xh^m(\eta)f(\eta)\,d\nu^{n}_{\rho(\cdot)} - \dfrac{c}{\kappa}\left(H_s(0)\right)^2\int_{\Omega_n}\mathcal{R}^{\alpha}_{m}(\eta)f(\eta)\,d\nu^{n}_{\rho(\cdot)}  \\
&-c\int_{0}^{1}\left(H_s(u)\right)^2\,du +\alpha^mH_s(0)+ n \,\langle L^m_{n}\sqrt{f},\sqrt{f}\rangle_{{\nu_{\rho(\cdot)}^n}} \Bigg\} \,\; ds,
\end{split}
\end{equation}
where the supremum is carried over all the densities $f$ with respect to $\nu_{\rho(\cdot)}^n$. Below, we introduce the Dirichlet form, which is given on $\sqrt f$ by
\begin{equation*}
\< \sqrt{f},-L^{m}_n \sqrt{f} \>_{\nu^{n}_{\rho(\cdot)}} = \< \sqrt{f},-L^{m}_P \sqrt{f} \>_{\nu^{n}_{\rho(\cdot)}} + n^{a-2}\< \sqrt{f},-L_S \sqrt{f} \>_{\nu^{n}_{\rho(\cdot)}} + \< \sqrt{f},-L_B \sqrt{f} \>_{\nu^{n}_{\rho(\cdot)}},
\end{equation*}
where the inner product $\< \cdot,\cdot \>_{\nu^{n}_{\rho(\cdot)}}$ is the one of  $L^{2}\left(\Omega_{n}, \nu^{n}_{\rho(\cdot)}\right)$. By the choice of  $\rho(\cdot)$ and from Lemma 5.1 of \cite{dgn}, we have that
\begin{equation}\label{eq:dir_bound}
\< L^{m}_{n}\sqrt{f}, \sqrt{f} \>_{\nu^{n}_{\rho(\cdot)}} \leq -\frac{1}{4} D^m_{n}(\sqrt{f},\nu^{n}_{\rho(\cdot)}) + O(\pfrac{1}{n}),
\end{equation}
where 
$D^m_{n}(\sqrt{f}, \nu^{n}_{\rho(\cdot)}) := (D^m_{P} + n^{a-2}D_{S} + D_{B})(\sqrt{f},\nu^{n}_{\rho(\cdot)}),$
with 
\begin{equation*}\label{D_P}
D^m_{P}(\sqrt{f},\nu^{n}_{\rho(\cdot)})\;:=\;\sum_{x=1}^{n-2}\int_{\Omega_{n}} p^{m}_{x,x+1}(\eta)\big(\sqrt{f(\eta^{x,x+1})}-\sqrt{f(\eta)}\big)^{2} \, d\nu^{n}_{\rho(\cdot)},
\end{equation*}
and 
\begin{equation*}\label{D_S}
\begin{split}
D_{S}(\sqrt{f},\nu^{n}_{\rho(\cdot)})\, &:=\,\sum_{x=1}^{n-2} 
\int_{\Omega_{n}} \big\{a_{x,x+1}(\eta)+a_{x+1,x}(\eta)\big\}\big(\sqrt{f(\eta^{x,x+1})}-\sqrt{f(\eta)}\big)^{2} \, d\nu^{n}_{\rho(\cdot)} \\
&= \sum_{x=1}^{n-2} 
\int_{\Omega_{n}} \big(\sqrt{f(\eta^{x,x+1})}-\sqrt{f(\eta)}\big)^{2} \, d\nu^{n}_{\rho(\cdot)}.
\end{split}
\end{equation*}
Above $p^{m}_{x,x+1}(\eta):=c^{m}_{x,x+1}(\eta)\big\{a_{x,x+1}(\eta)+a_{x+1,x}(\eta)\big\}$, where the rates $c^{m}_{x,x+1}(\eta)$ and $a_{x,x+1}(\eta)$ are given in \eqref{PMM rate} and \eqref{SSEP rate} respectively, and
\begin{equation*}
D_{B}(\sqrt{f},\nu^{n}_{\rho(\cdot)})\, := \, \tfrac{\kappa}{n^\theta}\Big(F_{1}^{\alpha}(\sqrt{f},\nu^{n}_{\rho(\cdot)})+F_{n-1}^{\beta}(\sqrt{f},\nu^{n}_{\rho(\cdot)})\Big),
\end{equation*}
where for $x\in\{1,n-1\}$ and $\gamma\in\{\alpha,\beta\}$, $F_{x}^{\gamma}$ is given by
\begin{equation}\label{F_terms}
F_{x}^{\gamma}(\sqrt{f},\nu^{n}_{\rho(\cdot)})= \int_{\Omega_{n}} I_{z}^{\gamma}(\eta)\big( \sqrt{f(\eta^{x})} - \sqrt{f(\eta)}\big)^{2}\,d\nu^{n}_{\rho(\cdot)},
\end{equation}
with $I_{x}^{\gamma}$ given in \eqref{Boundary rates}.

Now we are back to  \eqref{bound2}. 	From a Taylor expansion on $H$, we can replace its space derivative by the discrete gradient $\nabla^{-}_{n} H_{s}\left(\tfrac{x}{n}\right) = n\left(H_s\left(\tfrac{x}{n}\right)-H_s\left(\tfrac{x-1}{n}\right)\right)$, by paying a price of order $O\left(n^{-1}\right)$. Then, from a summation by parts, the first integral inside the supremum in \eqref{bound2} is equal to
\begin{equation}\label{bound3}
\begin{split}
& \int_{\Omega_{n}} \sum_{x=1}^{n-2} H_{s}\left(\tfrac{x}{n}\right)\big\{\tau_xh^{m}(\eta)-\tau_{x+1}h^{m}(\eta)\big\}f(\eta) \, d{\nu_{\rho(\cdot)}^n} \\
-& \int_{\Omega_{n}} \big\{H_{s}(0)\tau_{1}h^{m}(\eta)- H_{s}\left(\tfrac{n-2}{n}\right)\tau_{n-1}h^{m}(\eta)\big\}f(\eta) \, d{\nu_{\rho(\cdot)}^n}.
\end{split}
\end{equation}
Now, we need to bound both terms in expression \eqref{bound3} separately. We will call the first one, the bulk term, and the second one, the boundary term. We note that the main difference and difficulty when comparing this proof to the one presented in Sect. $6$ of \cite{BPGN} comes from the examination of the boundary term. For that reason we refer the reader to that article for  details on how to bound the bulk term from above by	
\begin{equation}\label{bound9}
\dfrac{n}{4}D_{P}^m(\sqrt{f},\nu_{\rho(\cdot)}^n)
+\dfrac{1}{n}\sum_{x=1}^{n-2} \left(H_{s}\left(\tfrac{x}{n}\right)\right)^2\Big(m+\tilde{C}(m,\alpha,\beta)\Big) + C(\rho),
\end{equation}
with $\tilde C(m,\alpha,\beta)=m^2+m\hat C(\alpha,\beta)$, where $\hat C(\alpha,\beta)$ is a positive constant. We observe that in \cite{BPGN}, since the function $H$ was taken with compact support, the boundary term of \eqref{bound3} is of order $O\left(n^{-1}\right)$ and vanishes, when $n\to \infty$. Since we consider a more general set of functions inside the supremum, our boundary term does not vanish and we need to use a more precise argument to treat it. \\ 
Now we need to examine the second line of  \eqref{bound3}.  Let us begin by examining the leftmost term given by
\begin{equation}\label{tau1-term}
-\int_{\Omega_{n}}H_s\left(0\right)\tau_1h^{m}(\eta)f(\eta)\, d\nu^{n}_{\rho(\cdot)}.
\end{equation}
Recall that we neglected above all the terms from the right boundary so that we will not treat the contribution from the rightmost term on the second line of \eqref{bound3}, but it completely analogous to what we do for the left boundary. 
Recall from \eqref{shift-bd} that $$\tau_{1}h^{m}(\eta) = \sum_{k=0}^{m-1}\alpha^{k}\; \prod_{j=1}^{m-k}\eta(j) - \sum_{k=1}^{m-1}\alpha^{k}\; \prod_{j=2}^{m+1-k} \eta(j),$$ and $\tau_{1}h^{m}(\eta)- \alpha^{m} =(\eta(1)-\alpha)\mathcal{R}^{\alpha}_{m}(\eta).$ Summing and subtracting $\alpha^m$ in \eqref{tau1-term}, and since $f$ is a density with respect to $\nu^{n}_{\rho(\cdot)}$,  we can rewrite \eqref{tau1-term} as
\begin{equation}\label{bound10}
H_s\left(0\right)\Bigg(\int_{\Omega_{n}}(\alpha-\eta(1))\mathcal{R}^{\alpha}_{m}(\eta)f(\eta)\, d\nu^{n}_{\rho(\cdot)} - \alpha^m\Bigg).
\end{equation}
The argument to estimate the leftmost term in \eqref{bound10} is similar, in essence, to the one used to treat the first term of \eqref{bound3}. We write the leftmost term in \eqref{bound10}  as one half of it plus one half of it, and by summing and subtracting $\tfrac{1}{2}f(\eta^{1})$, we obtain
\begin{equation}\label{bound11}
\begin{split}
\frac{1}{2}&H_s\left(0\right)\int_{\Omega_{n}}(\alpha-\eta(1))\mathcal{R}^{\alpha}_{m}(\eta)(f(\eta)+f(\eta^1))\, d\nu^{n}_{\rho(\cdot)} \\
+\frac{1}{2}&H_s\left(0\right)\int_{\Omega_{n}}(\alpha-\eta(1))\mathcal{R}^{\alpha}_{m}(\eta)(f(\eta)-f(\eta^1))\, d\nu^{n}_{\rho(\cdot)}.
\end{split}
\end{equation}
Denoting by $\tilde{\eta}$ the configuration $\eta$ removing its value at site $1$, and noticing that $\mathcal{R}^{\alpha}_{m}(\eta)$ does not depend on $\eta(1)$, we can write the first term in \eqref{bound11} as
\begin{equation*}
\begin{split}
\frac{1}{2}H_s\left(0\right)\sum_{\tilde{\eta}\in \Omega_{n-1}}&\Bigg( \alpha\mathcal{R}^{\alpha}_{m}(\eta)(f(0,\tilde{\eta})+f(1,\tilde{\eta}))\left(1-\rho\left(\tfrac{1}{n}\right) \right)\\
&+(\alpha-1)\mathcal{R}^{\alpha}_{m}(\eta)(f(1,\tilde{\eta})+f(0,\tilde{\eta}))\rho\left(\tfrac{1}{n}\right)\Bigg)\nu^{n-1}_{\rho(\cdot)}(\tilde{\eta}),
\end{split}
\end{equation*}
where the notation $f(1,\tilde{\eta})$ (resp. $f(0,\tilde{\eta})$) means that we are computing $f(\eta)$ with $\eta(1)=1$ (resp. $\eta(1)=0$). Hence, the previous expression is equal to 
\begin{equation*}
\begin{split}
\frac{1}{2}H_s\left(0\right)\left(\alpha-\rho\left(\tfrac{1}{n}\right) \right)\sum_{\tilde{\eta}\in \Omega_{n-1}}\mathcal{R}^{\alpha}_{m}(\eta){(f(0,\tilde{\eta})+f(1,\tilde{\eta}))}\nu^{n-1}_{\rho(\cdot)}(\tilde{\eta}).
\end{split}
\end{equation*}
Since $\mathcal{R}^{\alpha}_{m}(\eta)\leq m$, $\rho$ satisfies the conditions we imposed, and  since $f$ is a density with respect to $\nu^{n}_{\rho(\cdot)}$, the previous expression vanishes when $n\to \infty$. Let us now estimate the second term of \eqref{bound11}. Combining the identity $a-b = (\sqrt{a}-\sqrt{b})(\sqrt{a}+\sqrt{b})$ and Young's inequality, we bound from above the second term of \eqref{bound11} by
\begin{equation*}\label{bound12}
\begin{split}
\frac{A}{2}&\int_{\Omega_{n}}I_{1}^{\alpha}(\eta)\left(\sqrt{f(\eta)}-\sqrt{f(\eta^1)}\right)^2\, d\nu^{n}_{\rho(\cdot)}\\ 
+\frac{\left(H_s\left(0\right)\right)^2}{8A}&\int_{\Omega_{n}} \frac{(\alpha - \eta(1))^2\left(\mathcal{R}^{\alpha}_{m}(\eta)\right)^2}{I_{1}^{\alpha}(\eta)}\left(\sqrt{f(\eta)}+\sqrt{f(\eta^1)}\right)^2\, d\nu^{n}_{\rho(\cdot)},
\end{split}
\end{equation*}
where $A>0$ and $I_{1}^{\alpha}(\eta)$ is defined in \eqref{Boundary rates}. Recall the definition of $F^{\alpha}_{1}(\sqrt{f},\nu^{n}_{\rho(\cdot)})$ in \eqref{F_terms}. Using the inequality $(a+b)^{2} \leq 2a^2 + 2b^2$ and the identity $I^{\alpha}_{1}(\eta)=\tfrac{(\alpha- \eta(1))^2}{I^{\alpha}_{1}(\eta)} $, last expression can be bounded from above by $\frac{A}{2}F^{\alpha}_{1}(\sqrt{f},\nu^{n}_{\rho(\cdot)}) $ plus
\begin{equation*}
\frac{\left(H_s\left(0\right)\right)^2}{4A}\left(\int_{\Omega_{n}} \left(\mathcal{R}^{\alpha}_{m}(\eta)\right)^2 I^{\alpha}_{1}(\eta)f(\eta)\, d\nu^{n}_{\rho(\cdot)}+ \int_{\Omega_{n}} \left(\mathcal{R}^{\alpha}_{m}(\eta)\right)^2 I^{\alpha}_{1}(\eta)f(\eta^1)\, d\nu^{n}_{\rho(\cdot)} \right).
\end{equation*} 
After a change of variables in the second integral of the previous expression, we get
\begin{equation*}
\begin{split}
\frac{A}{2}F^{\alpha}_{1}(\sqrt{f},\nu^{n}_{\rho(\cdot)}) + \frac{\left(H_s\left(0\right)\right)^2}{4A}&\Bigg(\int_{\Omega_{n}} \left(\mathcal{R}^{\alpha}_{m}(\eta)\right)^2 I^{\alpha}_{1}(\eta)f(\eta)\, d\nu^{n}_{\rho(\cdot)}\\
&+ \int_{\Omega_{n}} \left(\mathcal{R}^{\alpha}_{m}(\eta)\right)^2 I^{\alpha}_{1}(\eta^1)\tfrac{d\nu^{n}_{\rho(\cdot)}(\eta^1)}{d\nu^{n}_{\rho(\cdot)}(\eta)}f(\eta)\, d\nu^{n}_{\rho(\cdot)} \Bigg).
\end{split}
\end{equation*}
Since $ I^{\alpha}_{1}(\eta) = I^{\alpha}_{1}(\eta^1)\tfrac{d\nu^{n}_{\rho(\cdot)}(\eta^1)}{d\nu^{n}_{\rho(\cdot)}(\eta)} $, last expression is equal to	
\begin{equation*}
\frac{A}{2}F^{\alpha}_{1}(\sqrt{f},\nu^{n}_{\rho(\cdot)})+\frac{\left(H_s\left(0\right)\right)^2}{2A}\int_{\Omega_{n}} \left(\mathcal{R}^{\alpha}_{m}(\eta)\right)^2 I^{\alpha}_{1}(\eta)f(\eta)\, d\nu^{n}_{\rho(\cdot)}.
\end{equation*}
Thus, taking $A=\tfrac{\kappa n}{2n^\theta}$ and since $\left(\mathcal{R}^{\alpha}_{m}(\eta)\right)^2 I^{\alpha}_{1}(\eta) \leq \left(\mathcal{R}^{\alpha}_{m}(\eta)\right)^2 \leq m\mathcal{R}^{\alpha}_{m}(\eta)$, the first boundary term of \eqref{bound3} is bounded from above by
\begin{equation}\label{bound14}
\frac{n}{4}	\frac{\kappa}{n^\theta}F^{\alpha}_{1}(\sqrt{f},\nu^{n}_{\rho(\cdot)})+\frac{m}{\kappa}	\frac{n^\theta}{n}\left(H_s\left(0\right)\right)^2\int_{\Omega_{n}} \mathcal{R}^{\alpha}_{m}(\eta) f(\eta)\, d\nu^{n}_{\rho(\cdot)} - \alpha^m H_s\left(0\right).
\end{equation}
Thus, combining \eqref{bound9} and  \eqref{bound14}, we bound \eqref{bound3}  from above by
\begin{equation*}
\begin{split}
\dfrac{n}{4}D^m_{P}(\sqrt{f},\nu_{\rho(\cdot)}^n)
&+\Big(m+\tilde{C}(m,\alpha,\beta)\Big)\dfrac{1}{n}\sum_{x=1}^{n-2} \left(H_{s}\left(\tfrac{x}{n}\right)\right)^2 + C(\rho) \\&+ 	\frac{n}{4}	\frac{\kappa}{n^\theta}F^{\alpha}_{1}(\sqrt{f},\nu^{n}_{\rho(\cdot)})+\frac{m}{\kappa}	\frac{n^\theta}{n}\left(H_s\left(0\right)\right)^2\int_{\Omega_{n}} \mathcal{R}^{\alpha}_{m}(\eta) f(\eta)\, d\nu^{n}_{\rho(\cdot)} - \alpha^m H_s\left(0\right),
\end{split}
\end{equation*} plus the terms that come from the right boundary and which are very similar to the ones we obtained for the left boundary. 
Therefore, taking $c =m+\tilde{C}(m,\alpha,\beta)$ in \eqref{bound2}, from last expression and \eqref{eq:dir_bound},  we can bound \eqref{eqqq}  from above by 
\begin{equation*}
\begin{split}
C(\alpha,\beta)+	\int_{0}^{T} \Bigg\{\Big(m+&\tilde{C}(m,\alpha,\beta)\Big)\dfrac{1}{n}\sum_{x=1}^{n-2} \left(H_{s}\left(\tfrac{x}{n}\right)\right)^2 -\Big(m+\tilde{C}(m,\alpha,\beta)\Big)\int_{0}^{1}\left(H_{s}(u)\right)^2\,du \\+  C(\rho) 
+&\frac{(H_s(0))^2}{\kappa}\Bigg(m\left(\frac{n^\theta}{n}-1\right)-\tilde C(m,\alpha,\beta)\Bigg)\int_{\Omega_n}\mathcal{R}^{\alpha}_{m}(\eta)f(\eta)\, d\nu^{n}_{\rho(\cdot)}\Bigg\} ds.
\end{split}
\end{equation*}
Above  $C(\alpha,\beta)$ is given in \eqref{ent}.
Noting that $\tilde{C}(m,\alpha,\beta)$ and  $\mathcal R^\alpha_m(\eta)$ as defined in \eqref{eq:R_new} are non-negative we can bound from above the last display by 
\begin{equation} \label{last}
\begin{split}
C(\alpha,\beta)+ \int_{0}^{T} \Bigg\{\Big(m+\tilde{C}(m,\alpha,\beta)\Big)\Bigg(\dfrac{1}{n}\sum_{x=1}^{n-2}& \left(H_{s}\left(\tfrac{x}{n}\right)\right)^2 -\int_{0}^{1}\left(H_{s}(u)\right)^2\,du\Bigg) \\+  C(\rho)
+&\frac{m}{\kappa}(H_s(0))^2\left(\tfrac{n^\theta}{n}-1\right)\int_{\Omega_n}\mathcal{R}^{\alpha}_{m}(\eta)f(\eta)\, d\nu^{n}_{\rho(\cdot)}\Bigg\} ds.
\end{split}
\end{equation}

Now, recall that $\theta=1$. Therefore, the previous expression converges to $TC(\rho)$, as $n \to \infty$.  From all this, we conclude that the  expectation in \eqref{estimate1} is bounded from above by $M_{0}: = C(\alpha,\beta) + TC(\rho)$. This ends the proof. 

\begin{remark}
	We note that due to the rightmost term in \eqref{last},  the energy estimate with test functions without compact support can only be obtained for  $\theta=1$. Even in the case $\theta<1$, where the factor 
	$\frac{m}{\kappa}\frac{n^\theta}{n}\left(H_s\left(0\right)\right)^2\int_{\Omega_n}\mathcal{R}^{\alpha}_{m}(\eta)f(\eta)\, d\nu^{n}_{\rho(\cdot)}$  would simply vanish as  $n\to \infty$, there would remain the term  $\frac{m}{\kappa}\left(H_s(0)\right)^2\int_{\Omega_n}\mathcal{R}^{\alpha}_{m}(\eta)f(\eta)\, d\nu^{n}_{\rho(\cdot)}ds$
	which  would blow up since we are taking the supremum over functions $f$.
\end{remark}

\section{The Slow bond case}\label{slow bond}

In this section we shortly explain how the previous results could be extended to the  porous medium model evolving on the one dimensional torus $\mathbb T_n=\bb  Z/n\bb Z = \{0,1,2,...,n -1\}$ and  we present the set up for the convergence result for the  respective hydrodynamic equations. We start in  Subsection \ref{macro:slow_bond} by presenting  the macroscopic equations and the convergence result that we conjecture to be true and that we leave for a future work. Then, in Subsection \ref{micro:slow_bond}
we introduce the porous medium model with a slow bond and we present an heuristic argument which shows that its hydrodynamic behavior  is given by the equations  presented in Subsection  \ref{macro:slow_bond}.

\subsection{The porous Medium equation on $\mathbb T$}\label{macro:slow_bond}

We denote by $\mathbb T$ the continuous one-dimensional torus $\bb R/\bb Z$ and for a function $f:\bb T\backslash\{0\}\to \bb R$,
we denote 
$$f(0):=f(0^+)=\lim_{\at{u\to 0}{u>0}}f(u)\quad \mbox{ and }\quad f(1):=f(0^-)=\lim_{\at{u\to 0}{u<0}}f(u)=f(1^-)=\lim_{\at{u\to 1}{u<1}}f(u).$$

\begin{definition}
	\label{Def. Robin2}
	Let $T>0$, $\kappa >0$ and $g:\bb T\rightarrow [0,1]$ a measurable function. We say that  $\rho^\kappa:[0,T]\times\bb T\backslash\{0\} \to [0,1]$ is a weak solution of the porous medium equation with Robin boundary conditions, see \eqref{eq:Robin2}, 
	if the following conditions hold: 
	\begin{enumerate}
		\item $(\rho^\kappa)^m \in L^{2}(0,T; \mathcal{H}^{1})$; 
		\item $\rho^\kappa$ satisfies the integral equation:
		\begin{equation}\label{eq:Robin integral3}
		\begin{split}
		\langle \rho_{t}^\kappa,  G_{t}\rangle  -\langle g,  G_{0}\rangle &- \int_0^t\langle \rho_{s}^\kappa,( \partial_s G_{s}+(\rho_s^\kappa)^{m-1} \Delta G_s ) \rangle   \, ds
		\\&+\int^{t}_{0}  \big\{  ({\rho_s}^\kappa)^m(1) \partial_u G_{s}(1)- ({\rho_s}^\kappa)^m(0) \partial_u G_{s}(0) \big\} \, ds\\
		&+ \kappa\int^{t}_{0} ((\rho_{t}^\kappa(0))^m -(\rho_{t}^\kappa(1) )^m)\big(G_{s}(0)- G_{s}(1)\big) \, ds=0,
		\end{split}   
		\end{equation}	
		for all $t\in [0,T]$ and all functions $G \in C^{1,2} ([0,T]\times \bb T\backslash\{0\} )$. 
	\end{enumerate}
\end{definition}

\begin{remark}
	\label{Def.Neumann2}
If in the porous medium equation with Robin boundary conditions we take $\kappa=0$, we get 	the porous medium equation with Neumann boundary conditions. 
\end{remark}

\begin{definition}
	\label{Def. Dirichlet2}
	Let $T>0$ and $g:\bb T\rightarrow [0,1]$ a measurable function. We say that $\rho:[0,T]\times\bb T\backslash\{0\}\to [0,1]$ is a weak solution of the porous medium equation with  {periodic} boundary conditions
	\begin{equation}\label{eq:Dirichlet2}
	\begin{cases}
	&\partial_{t}\rho_{t}(u)=\Delta\,  ({\rho_t})^m(u), \quad (t,u) \in (0,T]\times\bb T\backslash\{0\},\\
	&{ \rho} _{t}(0)={ \rho}_{t}(1),\quad t \in (0,T], \\
	&{ \rho}_{0}(u)= g(u), \quad u\in\bb T,
	\end{cases}
	\end{equation}
	if the following conditions hold:
	\begin{enumerate}
		\item $\rho^{m} \in L^{2}(0,T; \mathcal{H}^{1})$;
		\item $\rho$ satisfies the integral equation:
		\begin{equation}\label{eq:Dirichlet integral2}
		\begin{split}
		\langle \rho_{t} , G_{t}\rangle  \,-\langle g ,  G_{0}\rangle -& \int_0^t\langle \rho_{s}, (\partial_s G_{s} + (\rho_s)^{m-1} \Delta G_{s} ) \rangle \, ds=0
		\end{split}   
		\end{equation}
		for all $t\in [0,T]$ and all functions $G \in C^{1,2} ([0,T]\times\bb T)$;
	\end{enumerate}
\end{definition}

\begin{remark} 
	Observe that for $m=1$ the equations above become the ones considered in \cite{phase}.
	
\end{remark} 

Now we state the analogous result to our Theorem \ref{main theorem} for the equations given above.

\begin{conjecture}
	Let $g:\bb T\to [0,1]$ be a measurable function. For each $\kappa > 0$, let $\rho^\kappa:[0,T]\times\bb T\backslash\{0\}\to [0,1]$ be the unique weak solution of \eqref{eq:Robin2} with initial condition $g$.
	Then,
	\begin{equation*}
	\displaystyle \lim_{\kappa\to 0} \rho^\kappa \; = \;  \rho^0 \quad \textrm{ and } \quad \displaystyle \lim_{\kappa\to \infty} \rho^\kappa \; = \;  \rho^{\infty}
	\end{equation*}
	in $L^2([0,T]\times \bb T\backslash\{0\})$, where
	$\rho^{0}$ is the unique weak solution of \eqref{Def.Neumann2}, and $\rho^{\infty}$ is the unique weak solution of \eqref{eq:Dirichlet2}, both with initial condition $g$.
\end{conjecture}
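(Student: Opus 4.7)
The plan is to follow the structure of the proof of Theorem \ref{main theorem}, adapting each step to the torus setting with a slow bond at $0 \sim 1$. The starting point must be an analogue of Theorem \ref{energy estimate consequence} giving a uniform-in-$\kappa$ bound on the energy of $\rho^\kappa$. I would introduce a modified functional
$$\mathcal{T}_{\xi,m}(H) := \dl \xi^m, \p_u H \dr + \int_0^T \bigl\{(\xi_s)^m(1) H_s(1) - (\xi_s)^m(0) H_s(0)\bigr\}\,ds,$$
acting on $H \in C^{0,1}([0,T]\times \mathbb T\setminus\{0\})$, together with a weighted $L^2$ space whose reference measure places a single $\frac{1}{\kappa}Q_m(\xi_s(0),\xi_s(1))\,\delta_{0}(du)$ contribution, where $Q_m(a,b)=\sum_{i=0}^{m-1} a^{m-1-i}b^{i}$ is the polynomial arising from $a^m-b^m=(a-b)Q_m(a,b)$. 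Repeating the Riesz-representation argument of Proposition \ref{prop:2.6}, finiteness of the corresponding energy functional will force $\xi^m \in L^2(0,T;\mathcal H^1)$ and encode the coupled boundary condition $\p_u(\xi_s)^m(0) = \p_u(\xi_s)^m(1) = \kappa((\xi_s)^m(0) - (\xi_s)^m(1))$ almost everywhere in time.

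To obtain the uniform energy bound at the microscopic level, I would adapt Section \ref{IPS} to the porous medium model on the discrete torus $\mathbb T_n$ with a slow bond at $(n-1,0)$ where exchanges occur at rate $\kappa/n^{\theta}$ (with $\theta=1$). The bulk arguments (Theorem \ref{replacement bulk}, the Feynman–Kac reduction, the bound \eqref{bound9}) are unchanged. The boundary portion of the Dirichlet form now consists of a single slow-bond exchange term $F_{\mathrm{slow}}(\sqrt f,\nu^n_{\rho(\cdot)})$, and the combinatorial rearrangement that produced \eqref{bound14} must be redone using the identity $\tau_{n-1}h^m(\eta) - \tau_{0}h^m(\eta) = (\eta(n-1)-\eta(0))\,Q_m(\eta(n-1),\eta(0))$ for the discrete current across the slow bond, after which the Young-inequality step of the boundary estimate goes through mutatis mutandis.

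Once this energy estimate is in hand, the existence of limit points in $L^2([0,T]\times\mathbb T\setminus\{0\})$ is immediate from the uniform $L^\infty$ bound $0\leq \rho^\kappa\leq 1$. Showing that $(\rho^\star)^m \in L^2(0,T;\mathcal H^1)$ follows exactly as in Proposition \ref{prop2} via duality against $C_c^{0,\infty}$-functions on $\mathbb T\setminus\{0\}$, and the H\"older-in-time estimate of Proposition \ref{prop3} together with the Arzel\`a--Ascoli and Riesz arguments of Proposition \ref{prop4} carry over to guarantee pointwise-in-$t$ convergence of $\langle \rho_t^{\kappa_{j_\ell}}, H_t\rangle$ for every $H\in C^{0,2}([0,T]\times\mathbb T\setminus\{0\})$. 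For $\kappa_j\to 0$, the boundary integral in \eqref{eq:Robin integral3} is dominated by $\kappa_j\cdot 2T\|G\|_\infty$ and thus vanishes, recovering the Neumann equation. For $\kappa_j\to\infty$, one restricts to test functions with $G_s(0)=G_s(1)$ so that the boundary integral is identically zero, and the remaining terms pass to the limit using the analogue of Proposition \ref{prop5}.

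The main obstacle I anticipate is the $\kappa\to\infty$ identification of the limit as a weak solution of \eqref{eq:Dirichlet2}, namely proving $\rho^\star_t(0) = \rho^\star_t(1)$ for almost every $t$, in the spirit of Proposition \ref{prop:3.5}. The natural strategy is to show
$$\int_0^T \bigl((\rho^\star_s)^m(0) - (\rho^\star_s)^m(1)\bigr)^2\,ds = 0,$$
by comparing separately $(\rho^{\kappa_j}_s)^m(0)$ with $(\rho^{\star}_s)^m(0)$ and $(\rho^{\kappa_j}_s)^m(1)$ with $(\rho^{\star}_s)^m(1)$ through the averaging estimates of Proposition \ref{prop:3.5} (which only depend on $\dl \p_u(\rho^\kappa)^m,\p_u(\rho^\kappa)^m\dr \leq 4cM_0$), and then controlling $\int_0^T((\rho^{\kappa_j}_s)^m(0)-(\rho^{\kappa_j}_s)^m(1))^2\,ds$ by the coupled boundary identity $\p_u(\rho_s^{\kappa_j})^m(0) = \kappa_j((\rho_s^{\kappa_j})^m(0)-(\rho_s^{\kappa_j})^m(1))$, which bounds the integral by a constant times $\kappa_j^{-1}$ times the energy. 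The delicate point is that the weight $Q_m(\rho^{\kappa}_s(0),\rho^\kappa_s(1))$ in the coupled boundary measure need not be uniformly bounded below (the analogue of \eqref{ppositive} requires at least one of the boundary values to stay away from zero), so some care is required; one workaround is to argue first for the difference of $m$-th powers and then use Hölder continuity of $x\mapsto x^{1/m}$ to transfer the conclusion to $\rho^\star$ itself.
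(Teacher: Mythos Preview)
The statement you are attempting to prove is explicitly labeled a \emph{conjecture} in the paper; the authors do not give a proof. They write that ``the techniques employed in \cite{BPGN, phase} together with those developed in this work, would allow us to analyse it'' and later that ``the derivation of the energy estimates as stated in the first part of the article is subject to future work. The main difficulty we will face when deriving the energy estimates for this model is the fact that the form of the energy functional will be trickier. We leave this for a future work.'' So there is no proof in the paper to compare your proposal against.

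That said, your outline is entirely in the spirit of what the authors suggest: adapt the energy-functional machinery of Section~\ref{sec:statement_results} to the slow-bond setting, prove the energy bound via the particle system as in Section~\ref{IPS}, and then replay Propositions~\ref{prop2}--\ref{prop4} to identify the limit. You have also put your finger on exactly the obstruction the authors flag. In the reservoir case the weight $P^\gamma_m(\rho)\ge\gamma^{m-1}>0$ is bounded below because $\alpha,\beta\in(0,1)$ are fixed external parameters (see \eqref{ppositive}); in the slow-bond case your $Q_m(\rho_s^\kappa(0),\rho_s^\kappa(1))$ depends on the solution itself and can degenerate when both boundary traces vanish, so the passage from the $m$-th-power boundary identity to the linear one (the analogue of \eqref{boundaryconditions1}) and the Dirichlet-type identification in Proposition~\ref{prop:3.5} are no longer automatic. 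Your proposed workaround---prove equality of $m$-th powers first and then transfer via continuity of $x\mapsto x^{1/m}$---is a reasonable line of attack, but note that this only gives $\rho_t^\star(0)=\rho_t^\star(1)$ on the set where at least one of them is positive; when both vanish the conclusion is trivially true, so this gap can in fact be closed. The genuinely open part is establishing the uniform energy estimate itself with the solution-dependent weight, and your proposal, like the paper, does not resolve this.
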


\subsection{The porous medium model with a slow bond}\label{micro:slow_bond}

The porous medium model evolving on $\mathbb T_n$ can be defined as above, that is, it is the Markov process $\{\tilde\eta_{tn^2}:t\geq 0\}$ with state space $\tilde\Omega_n:=\{0,1\}^{\mathbb T_n} $ and  whose infinitesimal generator is given,  on functions $f: \tilde\Omega_{n} \to \mathbb{R}$, by
\begin{equation}\label{eq:gen_full}
n^2(\tilde {L}^{m}_{n}f)(\tilde \eta) = n^2(\tilde L^{m}_{P}f)(\tilde\eta) + n^{a}(\tilde L_{S}f)(\tilde\eta),\end{equation}
where 
\begin{equation}\label{generator PMM}
(\tilde{L}^{m}_{P}f)(\tilde\eta) = \sum_{x\in\mathbb T_n}\xi_x^nc^{m}_{x,x+1}(\tilde\eta)\{a_{x,x+1}(\eta)+a_{x+1,x}(\tilde\eta)\}[f(\tilde\eta^{x,x+1})-f(\tilde\eta)],
\end{equation}
and 
\begin{equation}\label{genrator SSEP}
(\tilde L_{S}f)(\tilde\eta) = \sum_{x\in\mathbb T_n}\{a_{x,x+1}(\tilde\eta)+a_{x+1,x}(\tilde\eta)\}[f(\tilde\eta^{x,x+1})-f(\tilde\eta)],
\end{equation}
with  $c^{m}_{x,x+1}(\tilde\eta) $ as in \eqref{PMM rate}, 
	$a_{x,y}(\tilde \eta)$ as in \eqref{SSEP rate} and $\xi_x^n=\textbf{1}_{x\neq n-1}+\frac{\kappa}{n^\theta}\textbf{1}_{x=n-1}$.
	By defining the empirical measure as in \eqref{eq:emp_mea} but replacing $\Sigma_n$ by $\mathbb T_n$, we have  the following result, whose proof we do not present here since it is very similar to the proof of Theorem 2.11 of \cite{BPGN}.

	\begin{conjecture}\label{hydrodynamic limit slow bond}
	Let $g:\mathbb T\rightarrow[0,1]$ be a measurable function and $\lbrace\mu _n\rbrace_{n \in \mathbb{N}}$  a sequence of probability measures on $\Omega_n$ associated with $g$ (as in \eqref{eq:associated}).  Then, for any $t \in [0,T]$ and any $\delta>0$,
	\begin{equation*}\label{limHidreform}
	\lim_{n \to \infty}\mathbb{P}_{\mu_n}\Bigg( \tilde\eta_{\cdot} \in \mathcal{D}([0,T], \tilde\Omega_n): \Bigg| \< \pi^{n}_{t},G\> - \int_{\mathbb T} G(u)\rho_t(u)\,du  \,  \Bigg| > \delta \Bigg)=0,
	\end{equation*}
	where
	\begin{itemize}
		\item[$\bullet$] $\rho_{t}(\cdot)$ is a weak solution of \eqref{eq:Dirichlet2}, for \textcolor{black}{$0 \leq \theta<1$};
		\item[$\bullet$] $\rho_{t}(\cdot)$ is a weak solution of (\ref{eq:Robin2}), for $\theta =1$;
		\item[$\bullet$] $\rho_{t}(\cdot)$ is a \textcolor{black}{weak solution of (\ref{eq:Robin2}) with the choice $\kappa=0$,  for $\theta >1$}.
	\end{itemize}
\end{conjecture}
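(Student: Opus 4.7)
The plan is to follow the entropy-martingale approach used in the proof of Theorem~\ref{hydrodynamic limit} in \cite{BPGN}, adapting it to the torus with a slow bond. First I would establish tightness of $\{\mathbb{Q}_n\}_{n\in\mathbb{N}}$ in $\mc D([0,T],\mc M_+)$ using Aldous' criterion. The Dynkin martingale associated with $\langle \pi^n_t, G_t\rangle$ has quadratic variation of order $O(n^{-1})$ because the SSEP perturbation of strength $n^{a-2}$ keeps the Dirichlet form uniformly controllable, and the slow bond only diminishes a single rate by $\kappa/n^\theta$, so it cannot worsen the bound. Tightness therefore follows as in \cite{BPGN} and any limit point $\mathbb{Q}^*$ is supported on trajectories of the form $\pi_t(du)=\rho_t(u)\,du$ with $0\leq \rho_t\leq 1$.

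Second, to identify the PDE that $\rho$ solves, I would apply Dynkin's formula with test functions $G\in C^{1,2}([0,T]\times\mathbb T\setminus\{0\})$ (allowing a jump at $0$) for $\theta\geq 1$, and with periodic $G\in C^{1,2}([0,T]\times\mathbb T)$ for $\theta<1$. Performing two discrete summations by parts on $n^2\tilde L_P^m\langle \pi^n,G\rangle$, together with the current identity $\tilde L_P^m\tilde\eta(x)=\xi_{x-1}^n[\tau_{x-1}h^m-\tau_x h^m]-\xi_x^n[\tau_x h^m-\tau_{x+1}h^m]$, yields a bulk term $\tfrac{1}{n}\sum_{x\in\mathbb T_n}\tau_x h^m(\tilde\eta_{sn^2})\Delta_n G_s(x/n)$, two \emph{interface} terms $\tau_0 h^m\,\partial_u G_s(0)-\tau_{n-1}h^m\,\partial_u G_s(1)$ coming from the non-periodicity of $G$, and a single \emph{slow-bond} contribution localized at the rescaled bond,
\begin{equation*}
\kappa\,n^{1-\theta}\bigl[\tau_{n-1}h^m(\tilde\eta_{sn^2})-\tau_0 h^m(\tilde\eta_{sn^2})\bigr]\bigl[G_s(0)-G_s(1)\bigr]+O(n^{-\theta}).
\end{equation*}
The bulk and interface terms can be handled by a direct adaptation of Theorems~\ref{replacement bulk} and \ref{replacement boundary4}, converting $\tau_x h^m$ into products of mesoscopic block averages and, after $\varepsilon\to 0$, into $(\rho_s)^m$.

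The main obstacle is the slow-bond contribution, which requires a replacement lemma tailored to this model: one must replace $\tau_{n-1}h^m$ and $\tau_0 h^m$ by the empirical densities $(\overleftarrow{\tilde\eta}^{\varepsilon n}_{sn^2}(n-1))^m$ and $(\overrightarrow{\tilde\eta}^{\varepsilon n}_{sn^2}(0))^m$, and then by the macroscopic values $(\rho_s(1))^m$ and $(\rho_s(0))^m$. Its proof should combine an entropy bound against a suitable Bernoulli reference measure with a Dirichlet-form estimate exploiting the SSEP bonds adjacent to the slow bond (whose effective rate after the $n^2$ time rescaling is $n^a$, dominating the slow-bond rate and supplying mobility across the interface), together with a one-sided energy estimate analogous to Proposition~\ref{prop:energy_estimate} localized on each side of the slow bond. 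This is the step where the non-linearity of the PMM interacts non-trivially with the slow factor $\kappa/n^\theta$, and where careful bookkeeping of cylinder functions reaching across the interface is required.

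Once this replacement is performed, the three regimes separate naturally: for $\theta>1$ the prefactor $n^{1-\theta}\to 0$ and the slow-bond contribution disappears, so only the interface terms remain and one recovers \eqref{eq:Robin integral3} with $\kappa=0$ (the Neumann-type equation); for $\theta=1$ the prefactor is exactly $\kappa$ and one obtains the Robin flux $\kappa\bigl[(\rho_s)^m(1)-(\rho_s)^m(0)\bigr]\bigl[G_s(0)-G_s(1)\bigr]$, matching the last line of \eqref{eq:Robin integral3}; for $\theta<1$ the prefactor $n^{1-\theta}$ blows up, so the only way the limit can stay finite for all admissible non-periodic $G$ is $(\rho_s)^m(0)=(\rho_s)^m(1)$ for a.e.~$s$, i.e.~the condition imposed in \eqref{eq:Dirichlet2}, while the integral equation \eqref{eq:Dirichlet integral2} itself is read off by taking periodic test functions (for which the slow-bond contribution is already identically zero). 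Uniqueness of weak solutions of each of the three equations on $\mathbb T\setminus\{0\}$, proved by adapting Section~7.2 of \cite{BPGN} to the case of a single interior interface, then upgrades convergence along subsequences to convergence of the full sequence $\{\mathbb{Q}_n\}_{n\in\mathbb{N}}$, finishing the proof.
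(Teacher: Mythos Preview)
The paper does not prove this statement: it is explicitly stated as a \emph{conjecture}, and the paper provides only a heuristic argument (the Dynkin computation \eqref{Complete Dynkin} and the paragraph following it), closing with ``The derivation of the energy estimates as stated in the first part of the article is subject to future work\ldots We leave this for a future work.'' So there is no paper proof to compare against.

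That said, your outline tracks the paper's heuristic quite closely: Dynkin's formula, the bulk/interface/slow-bond decomposition, replacement of $\tau_x h^m$ by block averages avoiding the slow bond, and the trichotomy in $\theta$. A few points where your sketch is thinner than what an actual proof would require: (i) for $\theta<1$, the blow-up argument you give (``the only way the limit can stay finite\ldots'') is not how one rigorously obtains the periodic condition---one typically proves a separate replacement lemma showing $\tau_0 h^m - \tau_{n-1}h^m$ is close to zero in $L^1(\mathbb{P}_{\mu_n})$ after time integration, using that the slow-bond Dirichlet form has an extra factor $n^{1-\theta}$ at disposal; (ii) the replacement at the slow bond is more delicate than you indicate, because the cylinder functions in $\tau_0 h^m$ straddle the bond $\{n-1,0\}$ and the moving-particle lemma must route mass around rather than through it; (iii) uniqueness of weak solutions for the three equations on $\mathbb{T}\setminus\{0\}$ is asserted but not established in the paper (nor in \cite{BPGN}, which treats $[0,1]$ with reservoirs), so this would need its own argument. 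These are exactly the difficulties the paper flags as open.
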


 Now we present an heuristic argument that justifies the relationship between the porous medium model described above and its hydrodynamic equations.
 From Dynkin's formula, see, for example, Lemma A1.5.1 of \cite{KL},  we know that  for a function $G\in C^{1,2}([0,T]\times\mathbb T\setminus{\{0\}})$
\begin{equation}\label{dynkin}
M^{n}_{t}(G) = \langle \pi^{n}_{t},G_t \rangle - \langle \pi^{n}_{0},G_0 \rangle - \int_{0}^{t} (\partial_{s} + n^{2}L_{P} + n^{a}L_{S} ) \langle \pi^{n}_{s},G_s \rangle \,ds
\end{equation}
is a martingale with respect to the natural filtration $\{\mathcal{F}_{t}\}_{t \geq 0}$, where $\mathcal{F}_{t}= \{\sigma(\tilde\eta_{s}): s \leq t \}$. A simple computation shows that  denoting by $j_{x,x+1}(\tilde\eta)$ the instantaneous current associated to the bond $\{ x,x+1\}$, we have that 
\begin{equation}\label{eq:current_bond}
j_{x,x+1}(\tilde\eta) =\xi_x\Big( \tau_{x}h^m(\tilde\eta)-\tau_{x+1}h^m(\tilde\eta)\Big), \;\; \text{for} \;\; x\in \mathbb T_n , 
\end{equation}
where 
$\tau_xh^m(\tilde \eta) $ is defined in \eqref{shift}.
A simple computation shows that  $n^2L_n\< \pi_s^{n},G \>$ is given by
\begin{equation}\label{action_generator}
\begin{split}
 \frac{1}{n}\sum_{x\neq 0,n-1}\Delta_n G\left(\tfrac xn\right)\tau_{x}h(\tilde \eta_{sn^{2}})
+&  \nabla^{+}_nG(0)\tau_0h(\tilde\eta_{sn^2}) - \frac{\kappa}{n^\theta}\nabla^{+}_nG(\tfrac{n-1}{n}) \tau_{0}h(\tilde\eta_{sn^2})\\
 +&\frac{\kappa}{n^\theta}\nabla_n^+ G\left(\tfrac {n-1}{n}\right) \tau_{-1}h^m(\tilde\eta_{sn^2}) -\nabla_n^+G\left(\tfrac{n-2}{n}\right)\tau_{-1}h^m(\tilde\eta_{sn^2}),
\end{split}
\end{equation}
Now, we explain just the case $\theta=1$ for which we note that $G \in C^{1,2} ([0,T]\times\bb T\setminus{\{0\}})$. The remaining cases are similar and easier. From last computations we get that 
\begin{equation}\label{Complete Dynkin}
\begin{split}
 M^{n}_{t}(G) =& \langle \pi^{n}_{t},G \rangle - \langle \pi^{n}_{0},G \rangle  - \int_{0}^{t} \frac{1}{n}\sum_{x\neq 0,n-1}\Delta_n G\left(\tfrac xn\right)\tau_{x}h(\tilde \eta_{sn^{2}}) \, ds  \\
&+\int_{0}^{t} \Big\{ \nabla^{+}_nG(0)- \kappa (G(\tfrac{0}{n})-G(\tfrac{n-1}{n}))
\}  \tau_{0}h(\tilde\eta_{sn^2})\,ds \\
&+\int_{0}^{t}\{\kappa (G(\tfrac{0}{n})-G\left(\tfrac {n-1}{n}\right))  -\nabla_n^+G\left(\tfrac{n-2}{n}\right)\} \tau_{-1}h^m(\tilde\eta_{sn^2}) \,ds.
\end{split}
\end{equation}

Following the same reasoning as below \eqref{heuristics}, and discarding the terms coming from $\tilde L_S$, we see that the bulk term, namely the rightmost term on the first line of the previous display, will give rise to $\int_{0}^t\int_{\mathbb T}\Delta G(u)(\rho_s(u))^m du\, ds.$
For the boundary term we do the following. Let us just analyse the second line of last display, the remaining one is completely analogous. 
For simplicity take $m=2$ and note that $$\tau_0h(\tilde \eta)=\tilde\eta(n-1)\tilde\eta(0)-\tilde \eta(0)\tilde\eta(1)-\tilde\eta(n-1)\tilde\eta(1).$$ Since these terms involve the slow bond, namely  the bond $\{n-1,0\}$   we have to derive a  replacement lemma, similar to the one of Lemma  5.3 of \cite{BPGN}, which allows replacing occupation variables $\tilde\eta(x)$ by $\tilde\eta^\epsilon n(x)$ but the averages should avoid the slow bond. This means we can replace the time integral of $\tau_0h(\tilde \eta)$  by the time integral of 
$$\overleftarrow{\tilde\eta}^{\epsilon n}(n-1)\overrightarrow{\tilde\eta}^{\epsilon n}(0)-\overrightarrow{\tilde\eta}^{\epsilon n}(0)\overrightarrow{\tilde\eta}^{\epsilon n}(1)-\overleftarrow{\tilde\eta}^{\epsilon n}(n-1)\overrightarrow{\tilde\eta}^{\epsilon n}(1)$$ and from the computations developed below \eqref{approx identity} and \eqref{lebesgue diff} we are able to replace, for $n$ sufficiently big, the  second line of \eqref{Complete Dynkin}, by
\begin{equation}
\int_{0}^{t} \Big\{ \partial_uG(0)- \kappa (G(0)-G(1))
\Big\} (\rho_s(0))^2\,ds.
\end{equation}

Putting all this together we see that we should obtain the notion of  weak solution of  Definition \ref{Def. Robin2}.

These arguments give an intuition why the hydrodynamic equations associated to this model should be the equations presented in the previous subsection. The derivation of the energy estimates as stated in the first part of the article is subject to future work. The main difficulty we will face when deriving the energy estimates for this model is the fact that the form of the energy functional will be trickier. We leave this for a future work. 

	\appendix
	\section{Analysis Tools}
This appendix is dedicated to the proof of some results from analysis that we  used along the article.

\begin{lemma}\label{dense}
	The set $C^{0,1}([0,T]\times[0,1])$ is a dense subset of $L^{2}_{\kappa,\xi}([0,T]\times[0,1])$.
\end{lemma}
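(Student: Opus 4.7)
The plan is to exploit the block structure of the norm $\dl \cdot,\cdot\dr_{\kappa,\xi}^{\alpha,\beta}$, which decomposes as a sum of the Lebesgue $L^2$ norm on $[0,T]\times[0,1]$ and two weighted $L^2$ norms on the boundary traces at $u=0$ and $u=1$. Since $\xi\in\mathcal B$ is bounded and $P_m^\gamma(\xi_s(\cdot))\geq \gamma^{m-1}>0$ by \eqref{ppositive}, both weights $P_m^\alpha(\xi_s(0))/\kappa$ and $P_m^\beta(\xi_s(1))/\kappa$ are measurable functions of $s$ bounded above and below by strictly positive constants. Consequently each boundary norm is equivalent to the unweighted $L^2([0,T])$ norm of the corresponding trace, so an element of $L_{\kappa,\xi}^2$ is really determined by three pieces of independent data: a bulk function in $L^2([0,T]\times[0,1])$ and two trace functions in $L^2([0,T])$.

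Given $H\in L_{\kappa,\xi}^2$ and $\eta>0$, I would first approximate each of these three ingredients separately. By standard density of smooth functions in $L^2$, there exists $\tilde H\in C^{0,1}([0,T]\times[0,1])$ with $\|\tilde H-H\|_{L^2([0,T]\times[0,1])}<\eta$. By density of continuous functions in $L^2([0,T])$, there exist $a_\eta,b_\eta\in C([0,T])$ with
\[
\int_0^T \frac{P_m^\alpha(\xi_s(0))}{\kappa}\bigl(a_\eta(s)-H_s(0)\bigr)^2\,ds<\eta^2, \qquad \int_0^T \frac{P_m^\beta(\xi_s(1))}{\kappa}\bigl(b_\eta(s)-H_s(1)\bigr)^2\,ds<\eta^2.
\]

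The next step is to glue these pieces into a single element of $C^{0,1}([0,T]\times[0,1])$ by means of cutoffs localized at the boundary. Fix $\varepsilon\in(0,1/2)$ and pick $\chi_0^\varepsilon,\chi_1^\varepsilon\in C^1([0,1])$ with $\chi_0^\varepsilon(0)=1$, $\chi_1^\varepsilon(1)=1$, $0\leq\chi_i^\varepsilon\leq 1$, and supports contained in $[0,\varepsilon]$ and $[1-\varepsilon,1]$ respectively; then define
\[
H^{\varepsilon,\eta}_s(u):=\bigl(1-\chi_0^\varepsilon(u)-\chi_1^\varepsilon(u)\bigr)\tilde H_s(u)+\chi_0^\varepsilon(u)\,a_\eta(s)+\chi_1^\varepsilon(u)\,b_\eta(s).
\]
Then $H^{\varepsilon,\eta}\in C^{0,1}([0,T]\times[0,1])$ with boundary traces exactly $a_\eta$ and $b_\eta$, so the boundary contribution to $\dl H^{\varepsilon,\eta}-H,H^{\varepsilon,\eta}-H\dr_{\kappa,\xi}^{\alpha,\beta}$ is at most $2\eta^2$.

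It remains to control the bulk $L^2$ error. Writing $H^{\varepsilon,\eta}-H=(\tilde H-H)+\chi_0^\varepsilon(a_\eta-\tilde H)+\chi_1^\varepsilon(b_\eta-\tilde H)$, the first summand has $L^2$ norm strictly less than $\eta$; the remaining terms are supported in $[0,T]\times\bigl([0,\varepsilon]\cup[1-\varepsilon,1]\bigr)$ where they are pointwise bounded by $\|a_\eta\|_\infty+\|b_\eta\|_\infty+\|\tilde H\|_\infty<\infty$, so their $L^2$ norm is $O(\sqrt{\varepsilon})$. Taking $\varepsilon$ small enough as a function of $\eta$ makes the total $L_{\kappa,\xi}^2$ distance from $H$ to $H^{\varepsilon,\eta}$ smaller than $C\eta$, which, since $\eta$ is arbitrary, yields density. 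The only delicate point is the glueing, since one must check that the cutoffs can be chosen $C^1$ in $u$ (an explicit choice such as a smoothed piecewise linear bump suffices), and no genuine obstacle arises because the boundary atoms of the measure $W_{\kappa,\xi_s(0),\xi_s(1)}^{\alpha,\beta}$ are mutually singular with the Lebesgue part.
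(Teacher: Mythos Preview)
Your proof is correct and follows essentially the same approach as the paper's: decompose the $L^2_{\kappa,\xi}$ norm into its bulk and boundary pieces, approximate each piece separately (the bulk by a $C^{0,1}$ function, the two boundary traces by continuous functions of time), and glue the three approximants together using $C^1$ cutoffs localized near $u=0$ and $u=1$. The only cosmetic differences are that the paper takes its bulk approximant $H_n$ to have compact support in $(0,1)$ from the start (so no factor $(1-\chi_0^\varepsilon-\chi_1^\varepsilon)$ is needed) and runs a single index $n$ instead of your two parameters $\eta,\varepsilon$.
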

\begin{proof}
	The proof presented here is adapted from Proposition $A.1$ of \cite{phase}. Let $H \in L^{2}_{\kappa,\xi}([0,T]\times[0,1])$. Recall that $ L^{2}_{\kappa,\xi}([0,T]\times[0,1]) \subset L^{2}([0,T]\times[0,1])$, thus $H\in L^{2}([0,T]\times[0,1])$, $H(\cdot,0)\in L^{2}([0,T])$ and $H(\cdot,1)\in L^{2}([0,T])$. Since $C^{0,1}([0,T]\times[0,1])$ is a dense subset of $L^{2}([0,T]\times[0,1])$, consider a sequence of functions $\{H_n\}_{n\in\mathbb{N}}$ such that for each $n\in\mathbb{N}$, $H_n \in C^{0,1}([0,T]\times[0,1])$ with compact support in $[0,T]\times(0,1)$ converging in $L^{2}([0,T]\times[0,1])$ to $H$ as $n$ goes to infinity. Consider also the sequences $\{h_{n}\}_{n\in\mathbb{N}}$ and $\{\tilde{h}_{n}\}_{n\in\mathbb{N}}$, where for each $n\in\mathbb{N}$, $h_{n}, \tilde{h}_n \in C^{0}([0,T])$, converging in $L^{2}([0,T])$ to $H(\cdot,0$) and $H(\cdot,1)$, respectively. The idea now is to define a bump function to obtain a smooth function $G_n$ which approximates  $H$ in $L^2_{\kappa, \xi}$. We begin by introducing a  function $f\in C^{\infty}([0,1])$ defined by
	$
	f(u) =e^{-\frac{1}{u}}\textbf{1}_{(0,1]} $
	and let $g\in C^{\infty}([0,1])$ defined by 
	\begin{equation*}
	g(u) = \frac{f(u)}{f(u)+f(1-u)}, \quad \text{for} \quad u\in[0,1].
	\end{equation*}
	For each $n\in \mathbb{N}$, let $a_{n}=\frac{1}{4n}$ and $b_{n}=\frac{1}{2n}$. Then, $v_{n}:[0,1]\to[0,1]$ and $\tilde{v}_{n}:[0,1]\to [0,1]$ are $C^{\infty}$ functions such that
	\begin{equation*}
	v_{n}(u) = \begin{cases}
	1, \quad \quad \quad \quad \quad \quad \;\; 0\leq u< a_n,\\
	1-g\left(\frac{u-a_{n}}{b_{n}-a_{n}}\right), \quad a_n\leq u\leq b_n, \\
	0, \quad \quad \quad \quad \quad \quad b_n<u\leq1,
	\end{cases}
\quad 
	\tilde{v}_{n}(u) = \begin{cases}
	0, \quad \quad \quad \quad \quad \quad\quad \;\; 0\leq u< 1-b_n\\
	1-g\left(\frac{(1-a_n)-u}{b_{n}-a_{n}}\right), \quad 1-b_n\leq u\leq 1-a_n, \\
	1, \quad \quad \quad \quad \quad \quad \quad 1-a_n< u \leq 1.
	\end{cases}
	\end{equation*}
	Thus, for each $n\in \mathbb{N}$, let
	$G_{n}(t,u) := H_{n}(t,u) + h_{n}(t)v_{n}(u) + \tilde{h}_{n}(t)\tilde{v}_{n}(u). $
	Note that for each $n\in \mathbb{N}$, we have that $\|G_n - H\|^{2}_{\kappa,\xi}$ equals to
	\begin{equation*}
	\begin{split}
	&\|G{n}-H\|_{2}^{2}  + \int_{0}^{T} \frac{P_{m}^{\alpha}(\rho_{t}^{\kappa}(0))}{\kappa}\left(G_{n}(t,0)-H(t,0)\right)^{2}\,dt+\int_{0}^{T}\frac{P_{m}^{\beta}(\rho_{t}^{\kappa}(1))}{\kappa}\left(G_{n}(t,1)-H(t,1)\right)^{2}\,dt \\
	=&\; \|H_{n}-H\|_{2}^{2}\,+\,O_H(\pfrac{1}{n})\\&+\frac{1}{\kappa}\int_{0}^{T}P_{m}^{\alpha}(\rho_{t}^{\kappa}(0))\left(H(t,0)-h_{n}(t)\right)^{2}\,dt+\frac{1}{\kappa}\int_{0}^{T}P_{m}^{\beta}(\rho_{t}^{\kappa}(1))\left(H(t,1)-\tilde{h}_{n}(t)\right)^{2}\,dt
. 
	\end{split}
	\end{equation*}
	Since $\rho^\kappa_t(\cdot)\in[0,1]$ for all $t$, we have $P_{m}^{\alpha}(\rho^\kappa_t(0)), P_{m}^{\beta}(\rho^\kappa_t(1)) \leq m$  and   from the previous identity we get
	\begin{equation*}
	\|G_n - H\|^{2}_{\kappa,\xi} \leq \|H_{n}-H\|_{2}^{2} + \frac{m}{\kappa}\left(\int_{0}^{T}\left(H(t,0)-h_{n}(t)\right)^{2}\,dt + \int_{0}^{T}\left(H(t,1)-\tilde{h}_{n}(t)\right)^{2}\,dt\right)+O_H(\pfrac{1}{n}),
	\end{equation*}
	which vanishes, as $n\to \infty$, concluding the proof.
\end{proof}

\begin{lemma}\label{L2.2}
	Let $\xi^m\in L^2([0,T]\times [0,1])$ such that 
	$\mathcal{E}^{\alpha,\beta}_{m,\kappa,c}(\xi) \leq M_0<\infty$, for some $c>0$ and $M_0>0$.
	Then
		$\mathcal{T}^{\alpha,\beta}_{\xi,m}(\cdot)$ is a bounded, with respect to the norm of $L^2_{k,\xi}([0,T]\times [0,1])$, linear functional in $C^{0,1}([0,T]\times [0,1])$. 
\end{lemma}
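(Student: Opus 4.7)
The plan is to prove linearity first (which is immediate) and then extract boundedness from the finiteness of the energy by a standard rescaling/optimization trick.

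Linearity of $\mathcal{T}_{\xi,m}^{\alpha,\beta}$ on $C^{0,1}([0,T]\times[0,1])$ follows directly from the defining formula \eqref{functional}, since both $H\mapsto \dl \xi^m,\partial_u H\dr$ and $H\mapsto \int_0^T\{\alpha^m H_s(0)-\beta^m H_s(1)\}\,ds$ are linear in $H$. So the core of the lemma is the boundedness estimate with respect to the norm of $L^2_{\kappa,\xi}([0,T]\times[0,1])$.

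For boundedness, I would fix $H\in C^{0,1}([0,T]\times[0,1])$ with $\|H\|_{L^2_{\kappa,\xi}}\neq 0$ and exploit the fact that $C^{0,1}$ is a real vector space so $\lambda H\in C^{0,1}$ for every $\lambda\in\mathbb R$. Plugging $\lambda H$ into the sup in \eqref{energy_functional} yields, for every $\lambda\in\mathbb R$,
\begin{equation*}
\lambda\,\mathcal{T}_{\xi,m}^{\alpha,\beta}(H)\;-\;c\,\lambda^2\,\dl H,H\dr_{\kappa,\xi}^{\alpha,\beta}\;\le\;\mathcal{E}_{m,\kappa,c}^{\alpha,\beta}(\xi)\;\le\; M_0,
\end{equation*}
where I used the bilinearity of $\dl\cdot,\cdot\dr_{\kappa,\xi}^{\alpha,\beta}$ (so $\dl\lambda H,\lambda H\dr_{\kappa,\xi}^{\alpha,\beta}=\lambda^2\dl H,H\dr_{\kappa,\xi}^{\alpha,\beta}$) and the linearity of $\mathcal T_{\xi,m}^{\alpha,\beta}$ already established.

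The right-hand side being independent of $\lambda$ turns this into a universal quadratic inequality in $\lambda$. Optimizing the left-hand side by choosing $\lambda=\mathcal{T}_{\xi,m}^{\alpha,\beta}(H)/\bigl(2c\,\dl H,H\dr_{\kappa,\xi}^{\alpha,\beta}\bigr)$ (equivalently, completing the square) gives
\begin{equation*}
\frac{\bigl(\mathcal{T}_{\xi,m}^{\alpha,\beta}(H)\bigr)^2}{4c\,\dl H,H\dr_{\kappa,\xi}^{\alpha,\beta}}\;\le\; M_0,
\end{equation*}
so that
\begin{equation*}
\bigl|\mathcal{T}_{\xi,m}^{\alpha,\beta}(H)\bigr|\;\le\; 2\sqrt{c\,M_0}\;\|H\|_{L^2_{\kappa,\xi}([0,T]\times[0,1])}.
\end{equation*}
Since the inequality is symmetric under $H\mapsto -H$ (both the norm and the bound $M_0$ are unchanged), the same constant works for either sign of $\mathcal T^{\alpha,\beta}_{\xi,m}(H)$, and the case $\|H\|_{L^2_{\kappa,\xi}}=0$ is trivial after observing that then both sides must vanish (since $\mathcal{T}$ is controlled from above by $c\lambda^2\|H\|^2+M_0=M_0$ uniformly in $\lambda$, forcing $\mathcal{T}(H)=0$). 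This yields the boundedness with constant $2\sqrt{cM_0}$.

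There is no real obstacle here: the whole argument is the classical duality between a uniformly bounded quadratic functional and the linear functional it dominates. The only point to be a little careful about is that the supremum in the definition of $\mathcal{E}_{m,\kappa,c}^{\alpha,\beta}$ is over $C^{0,1}([0,T]\times[0,1])$, which is closed under scalar multiplication, so substituting $\lambda H$ is legal for every real $\lambda$; this is what gives a two-sided bound on $\mathcal{T}_{\xi,m}^{\alpha,\beta}(H)$ rather than just a one-sided one.
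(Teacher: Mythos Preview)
Your proof is correct and follows essentially the same idea as the paper's: both exploit that $C^{0,1}([0,T]\times[0,1])$ is closed under scalar multiplication to turn the one-sided bound from the energy into a two-sided bound on $\mathcal T_{\xi,m}^{\alpha,\beta}$. The only difference is that the paper substitutes just $\pm H$ to get $|\mathcal T_{\xi,m}^{\alpha,\beta}(H)|\le M_0+c\,\dl H,H\dr_{\kappa,\xi}^{\alpha,\beta}$ and then restricts to the unit sphere, yielding operator norm at most $M_0+c$, whereas you optimize over all $\lambda\in\mathbb R$ and obtain the sharper constant $2\sqrt{cM_0}\le M_0+c$.
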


\begin{proof}
Recalling the  Definition  \ref{enerrrrgy}, we have that
$
\mathcal{T}_{\xi,m}^{\alpha,\beta}(H) - c\<\!\<H, H\>\!\>_{\kappa,\xi}^{\alpha,\beta} \leq M_0,
$ for all $H\in C^{0,1}([0,T]\times [0,1])$. Replacing $H$ by $-H$ we get $
	-\mathcal{T}_{\xi,m}^{\alpha,\beta}(H) - c\<\!\<H, H\>\!\>_{\kappa,\xi}^{\alpha,\beta} \leq M_0$. Then $
	|\mathcal{T}_{\xi,m}^{\alpha,\beta}(H)| \leq M_0+c\<\!\<H, H\>\!\>_{\kappa,\xi}^{\alpha,\beta} $.   Thus, the norm of the linear functional $H\mapsto \mathcal{T}_{\xi,m}^{\alpha,\beta}(H) $ is bounded from above by

$$\Vert \mathcal{T}_{\xi,m}^{\alpha,\beta}\Vert:=\sup_{\at{H\in C^{0,1}([0,T]\times [0,1])}{\<\!\<H, H\>\!\>_{\kappa,\xi}^{\alpha,\beta} =1}}|\mathcal{T}_{\xi,m}^{\alpha,\beta}(H) |\leq M_0+c.$$

\end{proof}

\begin{lemma}\label{charact}
	A function $\zeta\in L^2(0,T; \mathcal{H}^1)$ if, and only if, there exists $\partial_{u} \zeta\in L^2([0,T]\times [0,1]) $ such that 	$$\dl \zeta, \partial_{u} G \dr=-\dl \partial_{u} \zeta, G \dr,$$ 
	for all $G\in C^{0,\infty}_c([0,T]\times(0,1))$.
\end{lemma}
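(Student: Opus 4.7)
The plan is to prove the two implications separately, treating the forward direction as a direct integration in the time variable and the reverse direction via a Fubini-type argument combined with a density reduction to a countable family of test functions.

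For the forward direction, assume $\zeta\in L^2(0,T;\mathcal{H}^1)$. Then for almost every $t\in[0,T]$ the function $\zeta_t$ belongs to $\mathcal{H}^1$, hence admits a spatial weak derivative $\partial_u\zeta_t\in L^2([0,1])$. I would define $\partial_u\zeta:[0,T]\times[0,1]\to\mathbb{R}$ by $(\partial_u\zeta)(t,u):=(\partial_u\zeta_t)(u)$; its membership in $L^2([0,T]\times[0,1])$ is immediate from
\[
\int_0^T\int_0^1|\partial_u\zeta(t,u)|^2\,du\,dt\;\leq\;\int_0^T\|\zeta_t\|_{\mathcal{H}^1}^2\,dt\;<\;\infty.
\]
For $G\in C^{0,\infty}_c([0,T]\times(0,1))$ and almost every fixed $t$, the function $u\mapsto G(t,u)$ lies in $C^\infty_c((0,1))$, so Definition \ref{Def. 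Sobolev space} yields $\langle\zeta_t,\partial_u G_t\rangle=-\langle\partial_u\zeta_t,G_t\rangle$. Integrating in time and invoking Fubini gives the claimed identity $\dl\zeta,\partial_u G\dr=-\dl\partial_u\zeta,G\dr$.

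For the reverse direction, suppose there exists $\partial_u\zeta\in L^2([0,T]\times[0,1])$ satisfying the global integration-by-parts identity. The first step is to localize this identity in time. Given $g\in C^\infty_c((0,1))$ and $\varphi\in C^\infty([0,T])$, the product $G(t,u):=\varphi(t)g(u)$ belongs to $C^{0,\infty}_c([0,T]\times(0,1))$, and the hypothesis together with Fubini yield
\[
\int_0^T\varphi(t)\bigl[\langle\zeta_t,\partial_u g\rangle+\langle\partial_u\zeta(t,\cdot),g\rangle\bigr]\,dt\;=\;0.
\]
The fundamental lemma of the calculus of variations then gives $\langle\zeta_t,\partial_u g\rangle=-\langle\partial_u\zeta(t,\cdot),g\rangle$ for $t$ outside a null set $N_g$.

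The main obstacle, and the key step to handle carefully, is that a priori the exceptional set $N_g$ depends on $g$. To remove this dependence I would use the separability of $C^\infty_c((0,1))$ with respect to the $C^1$-topology: pick a countable dense subset $\{g_k\}_{k\in\mathbb{N}}$, set $N:=\bigcup_k N_{g_k}$, which is still null, and note that for every $t\notin N$ the identity above holds for every $g_k$. A density argument, relying on the fact that $\zeta_t\in L^2([0,1])$ and $\partial_u\zeta(t,\cdot)\in L^2([0,1])$ for a.e.\ $t$ (by Fubini), extends the identity to all $g\in C^\infty_c((0,1))$. Hence for a.e.\ $t$ the function $\zeta_t$ lies in $\mathcal{H}^1$ with spatial weak derivative equal to $\partial_u\zeta(t,\cdot)$. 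Finally,
\[
\|\zeta\|_{L^2(0,T;\mathcal{H}^1)}^2\;=\;\int_0^T\bigl(\|\zeta_t\|_2^2+\|\partial_u\zeta(t,\cdot)\|_2^2\bigr)\,dt\;=\;\|\zeta\|_{L^2([0,T]\times[0,1])}^2+\|\partial_u\zeta\|_{L^2([0,T]\times[0,1])}^2\;<\;\infty,
\]
with measurability of $t\mapsto\zeta_t\in\mathcal{H}^1$ following from Pettis's theorem applied to the separable Hilbert space $\mathcal{H}^1$. This completes the characterization.
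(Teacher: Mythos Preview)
Your proof is correct and follows essentially the same strategy as the paper: both directions use product test functions $G(t,u)=\varphi(t)g(u)$ together with Fubini to pass between the global integration-by-parts identity and its time-slice version. Your argument is in fact more careful than the paper's in two respects: you explicitly handle the fact that the exceptional time set $N_g$ may depend on $g$ (via separability of $C^\infty_c((0,1))$ and a countable dense family), and you address measurability of $t\mapsto\zeta_t$ in $\mathcal{H}^1$ via Pettis's theorem, whereas the paper simply asserts the conclusion.
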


\begin{proof} 	For $\zeta\in L^2(0,T; \mathcal{H}^1)$,
	from \eqref{double bracket}, 
	we can rewrite \eqref{sobolev norm 2} as
	\begin{equation*}
	\| \zeta\|^2_{L^{2}(0,T;\mathcal{H}^1)} = \dl \zeta,\zeta \dr + \dl \partial_{u} \zeta, \partial_{u} \zeta\dr.
	\end{equation*}
	Thus, $\partial_{u} \zeta\in L^2([0,T]\times [0,1]) $.
	By Definition \ref {Def. Sobolev space}, $\partial_{u} \zeta_t$ satisfies	$$\langle \zeta_t,\partial_u g\rangle =-\langle  \partial_{u} \zeta_t,g \rangle,$$ for all $g\in C^{\infty}_c((0,1))$ and for almost every  $t\in[0,T]$.
	For $G \in C^{0,\infty}_c([0,T]\times(0,1))$ and $t$ fixed $g=G_t\in C^{\infty}_c((0,1))$. Then above we replace $g$  by $G_t$ and we get 
	$$\langle \zeta_t,\partial_u G_t\rangle =-\langle  \partial_{u} \zeta_t,G_t \rangle,$$ 
	for almost every  $t\in[0,T]$.
	Then, integrating in time we obtain
	$$\dl \zeta, \partial_{u} G \dr=-\dl \partial_{u} \zeta, G \dr,$$ 
	for all $G\in C^{0,\infty}_c([0,T]\times(0,1))$. 
	
	On the other hand, if  
	there exists $\partial_{u} \zeta\in L^2([0,T]\times [0,1]) $ such that 	
	\begin{equation}\label{pppp}\dl \zeta, \partial_{u} G \dr=-\dl \partial_{u} \zeta, G \dr,\end{equation}
	for all $G\in C^{0,\infty}_c([0,T]\times(0,1))$, then, $\partial_{u} \zeta_t\in L^2( [0,1])$, for almost every $t\in[0,T]$.
	For all $g\in C^\infty_c((0,1))$ and $h\in C^{0}_c([0,T])$, we consider $G_t(u):=g(u)\,h(t)$, for all $u\in[0,1]$ and $t\in[0,T]$. Note that $G\in C^{0,\infty}_c([0,T]\times(0,1))$ and, by \eqref{pppp}, we have
	\begin{equation*}\int_0^T\big\{\<\zeta_t, \partial_{u} g \>+\<\partial_{u} \zeta_t, g \>\big\}\,h(t)\,dt=0,\end{equation*}
	for all $g\in C^\infty_c((0,1))$ and $h\in C^{0}_c([0,T])$. Then
	$\<\zeta_t,\partial_u g\>=-\<\partial_u\zeta_t, g\>$ for almost every $t\in[0,T]$ and all $g\in C^\infty_c((0,1))$, which implies that $\zeta_t\in \mathcal{H}^{1} \subset L^{2}([0,1])$ for almost every $t\in[0,T]$. Then it is possible to change $\zeta$ in a time null set to get $\zeta \in L^2(0,T;\mathcal H^1)$.
\end{proof}

	\begin{lemma}[Integration by parts]\label{IIP}
		Let $\zeta\in L^2(0,T;\mathcal H^1)$. Then
		\begin{equation}
		\int_{0}^{t}\left\< \partial_{u}\zeta_s, G_s \right\>\, ds= -\int_{0}^{t} \left\< \zeta_s, \partial_{u}G_s \right\>\, ds + \int_0^t\big\{\zeta_{s}(1)G_s(1) -\zeta_{s}(0)G_s(0)   \big\}\,ds, \\
		\end{equation}
		for all $G\in C^{\,0,1}([0,T]\times [0,1])$.
	\end{lemma}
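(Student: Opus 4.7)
The plan is to prove the identity first pointwise in $s$ (for almost every $s\in[0,T]$) and then to integrate in time, the integration in time being harmless once the integrability of all terms is established. Fix $s\in[0,T]$ such that $\zeta_s\in\mathcal H^1$; by definition of $L^2(0,T;\mathcal H^1)$ this holds for almost every $s$. Recall that in one dimension the Sobolev space $\mathcal H^1$ embeds continuously into $C([0,1])$ (indeed into $C^{0,1/2}([0,1])$), so $\zeta_s$ admits a continuous representative on $[0,1]$ with well-defined values $\zeta_s(0)$ and $\zeta_s(1)$, and $\|\zeta_s\|_\infty\le C\|\zeta_s\|_{\mathcal H^1}$ for a universal constant $C$. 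This is the only analytic input we need beyond Definition \ref{Def. Sobolev space}.

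First I would verify the pointwise-in-$s$ identity
\begin{equation*}
\langle \partial_u \zeta_s,G_s\rangle \;=\; -\langle \zeta_s,\partial_u G_s\rangle + \zeta_s(1)G_s(1)-\zeta_s(0)G_s(0)
\end{equation*}
for a.e.\ $s\in[0,T]$. To do this I would use the density of $C^\infty([0,1])$ in $\mathcal H^1$ (on the bounded interval $[0,1]$, smooth functions up to the boundary are dense) to pick $\zeta_s^{(n)}\in C^\infty([0,1])$ with $\zeta_s^{(n)}\to\zeta_s$ in $\mathcal H^1$. For smooth $\zeta_s^{(n)}$ and $G_s\in C^1([0,1])$, classical integration by parts from elementary calculus gives the identity with $\zeta_s$ replaced by $\zeta_s^{(n)}$. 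Passing to the limit $n\to\infty$: the two bulk inner products converge by the Cauchy--Schwarz inequality (using $\|G_s\|_\infty,\|\partial_u G_s\|_\infty<\infty$ and convergence of $\zeta_s^{(n)}$ and $\partial_u\zeta_s^{(n)}$ in $L^2([0,1])$), while the boundary terms converge thanks to the continuous embedding $\mathcal H^1\hookrightarrow C([0,1])$ recalled above, which yields $\zeta_s^{(n)}(0)\to\zeta_s(0)$ and $\zeta_s^{(n)}(1)\to\zeta_s(1)$.

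Next I would integrate the pointwise identity over $s\in[0,t]$. To justify this I need the three functions
\begin{equation*}
s\mapsto \langle \partial_u\zeta_s,G_s\rangle,\quad s\mapsto\langle\zeta_s,\partial_u G_s\rangle,\quad s\mapsto \zeta_s(0)G_s(0),\ \zeta_s(1)G_s(1)
\end{equation*}
to be integrable on $[0,t]$. The first two are bounded in absolute value by $\|G\|_\infty\|\partial_u\zeta_s\|_2$ and $\|\partial_u G\|_\infty\|\zeta_s\|_2$ respectively, both belonging to $L^2([0,T])\subset L^1([0,T])$ since $\zeta\in L^2(0,T;\mathcal H^1)$. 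For the boundary terms I would use the Sobolev embedding inequality $|\zeta_s(0)|+|\zeta_s(1)|\le 2\|\zeta_s\|_\infty\le 2C\|\zeta_s\|_{\mathcal H^1}$ together with $G_s(0),G_s(1)\le \|G\|_\infty$, which gives integrability again from $\zeta\in L^2(0,T;\mathcal H^1)$. Integrating the pointwise identity then yields the desired statement.

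The only subtle step is the pointwise integration by parts, and the main obstacle there is handling the traces $\zeta_s(0),\zeta_s(1)$; this is resolved cleanly by the one-dimensional Sobolev embedding $\mathcal H^1\hookrightarrow C([0,1])$, which also makes convergence of the boundary values along the smooth approximation automatic. No further cancellations or regularity arguments are needed.
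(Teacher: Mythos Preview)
Your argument is correct and is precisely the standard proof of this fact: establish the identity for a.e.\ fixed $s$ by approximating $\zeta_s\in\mathcal H^1$ by smooth functions (using the one-dimensional embedding $\mathcal H^1\hookrightarrow C([0,1])$ to control the boundary traces), and then integrate in time after checking integrability. The paper does not give its own proof here but simply cites Lemma~7.1 of \cite{fgn1}; your sketch is essentially what that reference contains, so there is nothing to compare.
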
\begin{proof} 
	This proof can be found, for example, in Lemma 7.1 of \cite{fgn1}.
	\end{proof}

	\begin{lemma}\label{L_bound} Let  $\rho^\kappa:[0,T]\times[0,1] \to [0,1]$ be such that  $(\rho^\kappa)^m \in L^{2}(0,T; \mathcal{H}^{1})$. For all $\varepsilon>0$, $m\in\mathbb N$,	$j\in \{0,1,\dots, m-1\}$ and for almost every $s\in[0,T]$,	 we have
				\begin{equation}\label{L_bound_1}
			\big| \rho^\kappa _{s}(0)-\langle \rho^\kappa _{s},  \overrightarrow{\iota}^{j\varepsilon}_{\varepsilon}\rangle\big|\leq \varepsilon^{\sfrac{1}{2(m+1)} }\;\;+\;\;\varepsilon^{\sfrac{1}{(m+1)}\; }\pfrac{2^m}{3}m^{\sfrac{3}{2}}\;\Vert \partial_u( \rho^\kappa _{s})^m\Vert_2
			\end{equation}and
				\begin{equation*}
			\big| \rho^\kappa _{s}(1)-\langle \rho^\kappa _{s},  \overleftarrow{\iota}^{1-j\varepsilon}_{\varepsilon}\rangle\big|\leq \varepsilon^{\sfrac{1}{2(m+1)} }\;\;+\;\;\varepsilon^{\sfrac{1}{(m+1)}\; }\pfrac{2^m}{3}m^{\sfrac{3}{2}}\;\Vert \partial_u( \rho^\kappa _{s})^m\Vert_2\,.
			\end{equation*}

	\end{lemma}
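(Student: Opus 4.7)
The plan is to bound $|\rho_s^\kappa(0)-\rho_s^\kappa(v)|$ pointwise in $v\in[j\varepsilon,(j+1)\varepsilon]$ by a combination of a ``soft'' term $\delta$ and a ``hard'' term involving $|(\rho_s^\kappa)^m(0)-(\rho_s^\kappa)^m(v)|$, then integrate in $v$ and optimize in $\delta$. I focus on the left boundary inequality, since the right one follows by the symmetric argument.

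First I would observe that since $(\rho_s^\kappa)^m\in\mathcal H^1$ for almost every $s\in[0,T]$, the function $\rho_s^\kappa$ is continuous on $[0,1]$ (by the elementary inequality $|a-b|\leq|a^m-b^m|^{1/m}$ for $a,b\in[0,1]$), so $\rho_s^\kappa(0)$ is well-defined. The key algebraic input is the identity from \eqref{newton_binom},
\begin{equation*}
(\rho_s^\kappa)^m(0)-(\rho_s^\kappa)^m(v)=\bigl(\rho_s^\kappa(0)-\rho_s^\kappa(v)\bigr)\,P_m^{\rho_s^\kappa(0)}\bigl(\rho_s^\kappa(v)\bigr).
\end{equation*}
The strategy is a case split. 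Fix $\delta>0$. If $|\rho_s^\kappa(0)-\rho_s^\kappa(v)|<\delta$, there is nothing to do; otherwise at least one of $\rho_s^\kappa(0),\rho_s^\kappa(v)$ exceeds $\delta/2$, and since each of the two extreme terms of $P_m^{\rho_s^\kappa(0)}(\rho_s^\kappa(v))=\sum_{i=0}^{m-1}\rho_s^\kappa(0)^{m-1-i}\rho_s^\kappa(v)^i$ is bounded below by $(\delta/2)^{m-1}$ in the respective case, the identity above yields
\begin{equation*}
|\rho_s^\kappa(0)-\rho_s^\kappa(v)|\leq \delta+\frac{2^{m-1}}{\delta^{m-1}}\,|(\rho_s^\kappa)^m(0)-(\rho_s^\kappa)^m(v)|,
\end{equation*}
which is the crucial pointwise estimate.

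Next I would average this inequality over $v\in[j\varepsilon,(j+1)\varepsilon]$. Since $(\rho_s^\kappa)^m\in\mathcal H^1$, for a.e.\ $s$
\begin{equation*}
|(\rho_s^\kappa)^m(0)-(\rho_s^\kappa)^m(v)|=\Bigl|\int_0^v\partial_u(\rho_s^\kappa)^m(u)\,du\Bigr|\leq \sqrt{v}\;\|\partial_u(\rho_s^\kappa)^m\|_2,
\end{equation*}
and since $v\leq (j+1)\varepsilon\leq m\varepsilon$ in the integration range, the elementary computation
\begin{equation*}
\frac{1}{\varepsilon}\int_{j\varepsilon}^{(j+1)\varepsilon}\sqrt{v}\,dv\leq \tfrac{2}{3}\,m^{3/2}\sqrt{\varepsilon}
\end{equation*}
gives
\begin{equation*}
\bigl|\rho_s^\kappa(0)-\langle \rho_s^\kappa,\overrightarrow{\iota}^{j\varepsilon}_{\varepsilon}\rangle\bigr|\leq \delta+\frac{2^{m-1}}{\delta^{m-1}}\cdot\tfrac{2}{3}\,m^{3/2}\sqrt{\varepsilon}\;\|\partial_u(\rho_s^\kappa)^m\|_2.
\end{equation*}
Finally, choosing $\delta=\varepsilon^{1/(2(m+1))}$ balances the exponents: the first term is $\varepsilon^{1/(2(m+1))}$, and the exponent in the second becomes $\tfrac{1}{2}-\tfrac{m-1}{2(m+1)}=\tfrac{1}{m+1}$, producing exactly the stated bound with constant $\tfrac{2^m}{3}m^{3/2}$.

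The main subtlety is the necessity of a linear dependence on $\|\partial_u(\rho_s^\kappa)^m\|_2$: the naive route via $|a-b|\leq |a^m-b^m|^{1/m}$ produces a factor $\|\partial_u(\rho_s^\kappa)^m\|_2^{1/m}$, which is useless when the lemma is later combined with Cauchy--Schwarz in $s$ (as is done in the proof of Proposition~\ref{prop:energy_estimate} to control the boundary replacements). The cut-off at level $\delta$, together with the explicit lower bound on $P_m^{\rho_s^\kappa(0)}(\rho_s^\kappa(v))$ in the ``large difference'' regime, is precisely the device that linearizes the estimate.
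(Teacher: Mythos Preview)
Your proof is correct and follows essentially the same route as the paper's: a threshold argument to lower-bound the polynomial $P_m^{\rho_s^\kappa(0)}(\rho_s^\kappa(v))$ by $(\delta/2)^{m-1}$ when the difference is large, combined with the $\mathcal H^1$ bound on $(\rho_s^\kappa)^m$ and the same choice of cut-off $\delta=\varepsilon^{1/(2(m+1))}$. The paper phrases the split via the level set $\Gamma_{s,A}=\{v:\rho_s^\kappa(0)\leq A,\ \rho_s^\kappa(v)\leq A\}$ with $A=\tfrac{1}{2}\varepsilon^{1/(2(m+1))}$ rather than via the size of $|\rho_s^\kappa(0)-\rho_s^\kappa(v)|$, but this is only a cosmetic difference ($\delta=2A$) and all subsequent estimates and constants coincide.
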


\begin{remark}
	We observe that for $m=1$  the inequalities above can be refined. Indeed, in that case  since $\rho ^\kappa\in L^2(0,T;\mathcal H^1)$, then
	\begin{equation*}
	\rho^\kappa _{s}(0)-\langle \rho^\kappa _{s},  \overrightarrow{\iota}^{0}_{\varepsilon}\rangle=
	\frac{1}{\varepsilon}\int_0^\varepsilon \int _0^v\partial_z\rho^\kappa_s(z)\,dz\,dv,
	\end{equation*} for all $\varepsilon>0$ and for almost every $s\in[0,T]$.
	Using Cauchy-Schwarz inequality, we get 
	\begin{equation*}
	\big| \rho^\kappa _{s}(0)-\langle \rho^\kappa _{s},  \overrightarrow{\iota}^{0}_{\varepsilon}\rangle\big|\leq \sqrt\varepsilon\,\; \pfrac{2}{3}\;\Vert \partial_u( \rho^\kappa _{s})\Vert_2\,,
	\end{equation*}
for all $\varepsilon>0$ and for almost every $s\in[0,T]$. The same bound holds for $\big| \rho^\kappa _{s}(1)-\langle \rho^\kappa _{s},  \overleftarrow{\iota}^{1}_{\varepsilon}\rangle\big|$.
\end{remark}
\begin{proof}
	We will only prove the first inequality  on the statement of this lemma, because the proof of the other one is similar. 
	 For each $A>0$ and  $s\in[0,T]$, we define
	 $\Gamma_{s,A}=\{v\in[0,1];\, \rho^\kappa_s(0)\leq A \mbox{ and } \rho^\kappa_s(v)\leq A\, \}$. In the end, we will choose a suitable $A$. From this, we can bound $|\rho^\kappa _{s}(0)-\langle \rho^\kappa _{s},  \overrightarrow{\iota}^{j\varepsilon}_{\varepsilon}\rangle|$ from above  by
	\begin{equation*}
\Big|\frac{1}{\varepsilon}\int_{j\varepsilon}^{j\varepsilon+\varepsilon }\big(\rho^\kappa_s(0)-\rho^\kappa_s(v)\big)\,\, \Big(1_{\Gamma_{s,A}}(v)+1_{\Gamma_{s,A}^\complement}(v)\Big) \,dv\Big|\;\leq\; 2A\,+\frac{1}{\varepsilon}\int_{j\varepsilon}^{j\varepsilon+\varepsilon }\big|\rho^\kappa_s(0)-\rho^\kappa_s(v)\big|\,\, 1_{\Gamma_{s,A}^\complement}(v) \,\,dv,
	\end{equation*}  for all $\varepsilon>0$ and  $j\in \{0,1,\dots, m-1\}$.
 Now,
note that, for all $v\in \Gamma_{s,A}^\complement$, we have
		\begin{equation*}
\big|\rho^\kappa_s(0)-\rho^\kappa_s(v)\big|=\Bigg|\frac{(\rho^\kappa_s(0))^m-(\rho^\kappa_s(v))^m}{\sum_{i=0}^{m-1} (\rho^\kappa_s(0))^{m-1-i} (\rho^\kappa_s(v))^{i}}
\Bigg|\leq \frac{|(\rho^\kappa_s(0))^m-(\rho^\kappa_s(v))^m|}{A^{m-1}}
\,.
	\end{equation*} 
Moreover, since $(\rho ^\kappa)^m\in L^2(0,T;\mathcal H^1)$ and from the Cauchy-Schwarz inequality, we obtain
\begin{equation*}
\big|(\rho^\kappa_s(0))^m-(\rho^\kappa_s(v))^m\big|\,=\,\Bigg| \int _0^v\partial_z(\rho^\kappa_s(z))^m\,dz\Bigg|\;\leq \;\sqrt v\,\,\;\Vert \partial_u( \rho^\kappa _{s})^m\Vert_2\,,
\end{equation*} for almost every $s\in[0,T]$ and all $v\in[0,1]$.
Thus,
\begin{equation*}
\frac{1}{\varepsilon}\int_{j\varepsilon}^{j\varepsilon+\varepsilon }\big|\rho^\kappa_s(0)-\rho^\kappa_s(v)\big|\,\, 1_{\Gamma_{s,A}^\complement}(v) \,\,dv\leq 
\frac{1}{\varepsilon\, A^{m-1}}\;\Vert \partial_u( \rho^\kappa _{s})^m\Vert_2\int_{j\varepsilon}^{j\varepsilon+\varepsilon }\sqrt v\,\, dv\;\leq\,\;\, \frac{2}{3}m^{\sfrac{3}{2}}\;\frac{\sqrt \varepsilon}{ A^{m-1}}\;\Vert \partial_u( \rho^\kappa _{s})^m\Vert_2\,,
\end{equation*}
for almost every $s\in[0,T]$. Taking $A=\frac{1}{2}\varepsilon^{\sfrac{1}{2(m+1)}} $, we obtain the inequality stated in \eqref{L_bound_1}.
\end{proof}

	\section*{Acknowledgements}
	A.N. was supported through a grant ``L'OR\' EAL - ABC - UNESCO Para Mulheres na Ci\^encia''. R.P. thanks  FCT/Portugal for support through the project Lisbon Mathematics PhD (LisMath). This project has received funding from the European Research Council (ERC) under  the European Union's Horizon 2020 research and innovative programme (grant agreement   No 715734).

\end{document}